\newtheoremstyle{mydefinition}{}{}{}{}%%
{\bfseries}{}{\newline}{%%
\thmname{#1}\thmnumber{\ #2}\thmnote{\quad#3}.}%%
\theoremstyle{mydefinition}%%
\newtheorem{Thm}{Theorem}%%
\newtheorem{Prop}[Thm]{Proposition}%%
\newtheorem{Lem}{Lemma}%%
\newtheorem*{Lem*}{Lemma}%%
\newtheorem{Rem}{Remark}%%
\newcommand{\opnorm}{\@ifstar\@opnorms\@opnorm}%%
\newcommand{\@opnorms}[1]{%%
\left|\mkern-1.5mu\left|\mkern-1.5mu\left|#1%%
\right|\mkern-1.5mu\right|\mkern-1.5mu\right|}%%
\newcommand{\@opnorm}[2][]{%%
\mathopen{#1|\mkern-1.5mu#1|\mkern-1.5mu#1|}#2%%
\mathclose{#1|\mkern-1.5mu#1|\mkern-1.5mu#1|}}%%
{\end{smallmatrix}\right)}%%
{\end{smallmatrix}\right]}%%
{\end{smallmatrix}\right\}}%%
\begin{document}%%
%%%%%%%%%%%%%%%%%%%%%%%%%%%%%%%%%%%%%%%%%%%%%%%%%%
%%%%%%%%%%%%%%%%%%%%%%%%%%%%%%%%%%%%%%%%%%%%%%%%%%
%%   Front Matter   %%%%%%%%%%%%%%%%%%%%%%%%%%%%%%
%%%%%%%%%%%%%%%%%%%%%%%%%%%%%%%%%%%%%%%%%%%%%%%%%%
%%%%%%%%%%%%%%%%%%%%%%%%%%%%%%%%%%%%%%%%%%%%%%%%%%
\begin{frontmatter}%%%%%%%%%%%%%%%%%%%%%%%%%%%%%%%
%%%%%%%%%%%%%%%%%%%%%%%%%%%%%%%%%%%%%%%%%%%%%%%%%%
%%   Title   %%%%%%%%%%%%%%%%%%%%%%%%%%%%%%%%%%%%%
%%%%%%%%%%%%%%%%%%%%%%%%%%%%%%%%%%%%%%%%%%%%%%%%%%
\title{Initial-boundary value problems for complex Ginzburg-Landau equations governed by \(p\)-Laplacian in general domains}
%%%%%%%%%%%%%%%%%%%%%%%%%%%%%%%%%%%%%%%%%%%%%%%%%%
%%   Authors   %%%%%%%%%%%%%%%%%%%%%%%%%%%%%%%%%%%
%%%%%%%%%%%%%%%%%%%%%%%%%%%%%%%%%%%%%%%%%%%%%%%%%%
\author[add1]{Takanori Kuroda}
\ead{1d\_est\_quod\_est@ruri.waseda.jp}
\author[add2,fn2]{Mitsuharu \^Otani}
\ead{otani@waseda.jp}
%\author[add3]{Shoji Shimizu}
%\ead{shimizu\_s@msi.co.jp}
%%%%%%%%%%%%%%%%%%%%%%%%%%%%%%%%%%%%%%%%%%%%%%%%%%
%%   Footnotes   %%%%%%%%%%%%%%%%%%%%%%%%%%%%%%%%%
%%%%%%%%%%%%%%%%%%%%%%%%%%%%%%%%%%%%%%%%%%%%%%%%%%
\fntext[fn2]{Partly supported by the Grant-in-Aid for Scientific Research, \# 15K13451, the Ministry of Education, Culture, Sports, Science and Technology, Japan.}
%%%%%%%%%%%%%%%%%%%%%%%%%%%%%%%%%%%%%%%%%%%%%%%%%%
%%   Addresses   %%%%%%%%%%%%%%%%%%%%%%%%%%%%%%%%%
%%%%%%%%%%%%%%%%%%%%%%%%%%%%%%%%%%%%%%%%%%%%%%%%%%
\address[add1]{Major in Pure and Applied Physics,\\ Graduate School of Advanced Science and Engineering, \\ Waseda University, 3-4-1 Okubo Shinjuku-ku, Tokyo, 169-8555, JAPAN}
\address[add2]{Department of Applied Physics, School of Science and Engineering, \\ Waseda University, 3-4-1 Okubo Shinjuku-ku, Tokyo, 169-8555, JAPAN}
%\address[add3]{NTT DATA Mathematical Systems Inc.\\ Shinjyuku-ku, Tokyo, 160-0016, JAPAN}
%%%%%%%%%%%%%%%%%%%%%%%%%%%%%%%%%%%%%%%%%%%%%%%%%%
%%   Abstract   %%%%%%%%%%%%%%%%%%%%%%%%%%%%%%%%%%
%%%%%%%%%%%%%%%%%%%%%%%%%%%%%%%%%%%%%%%%%%%%%%%%%%
\begin{abstract}
In this paper, complex Ginzburg-Landau (CGL) equations governed by p-Lap\-la\-cian are studied.
We discuss the global existence of solutions for the initial-boundary value problem of the equation in general domains.
The global solvability of the initial-boundary value problem for the case when \(p = 2\) is already examined by several authors provided that parameters appearing in CGL equations satisfy a suitable condition.
Our approach to CGL equations is based on the theory of parabolic equations with non-monotone perturbations.
By using this method together with some approximate procedure and a diagonal argument, the global solvability is shown without assuming any growth conditions on the nonlinear terms.
\end{abstract}
%%%%%%%%%%%%%%%%%%%%%%%%%%%%%%%%%%%%%%%%%%%%%%%%%%
%%   Keywords   %%%%%%%%%%%%%%%%%%%%%%%%%%%%%%%%%%
%%%%%%%%%%%%%%%%%%%%%%%%%%%%%%%%%%%%%%%%%%%%%%%%%%
\begin{keyword}
initial boundary value problem\sep
global solvability\sep
complex Ginzburg-Landau equation\sep
unbounded general domain\sep
subdifferential operator
\MSC[2010]
35Q56\sep %Ginzburg-Landau equations
47J35\sep %Nonlinear evolution equations
35K61 %Nonlinear initial-boundary value problems for nonlinear parabolic equations
\end{keyword}
\end{frontmatter}
%%%%%%%%%%%%%%%%%%%%%%%%%%%%%%%%%%%%%%%%%%%%%%%%%%
%%%%%%%%%%%%%%%%%%%%%%%%%%%%%%%%%%%%%%%%%%%%%%%%%%
%%  Sec.1. Introduction  %%%%%%%%%%%%%%%%%%%%%%%%%
%%%%%%%%%%%%%%%%%%%%%%%%%%%%%%%%%%%%%%%%%%%%%%%%%%
%%%%%%%%%%%%%%%%%%%%%%%%%%%%%%%%%%%%%%%%%%%%%%%%%%
\section{Introduction}
\label{sec-1}
%%%%%%%%%%%%%%%%%%%%%%%%%%%%%%%%%%%%%%%%%%%%%%%%%%
%%  CGL  %%%%%%%%%%%%%%%%%%%%%%%%%%%%%%%%%%%%%%%%%
%%%%%%%%%%%%%%%%%%%%%%%%%%%%%%%%%%%%%%%%%%%%%%%%%%
In this paper we are concerned with the following complex Ginzburg-Landau equation governed by \(p\)-Laplacian in a general domain \(\Omega \subset \mathbb{R}^N\) with smooth boundary \(\partial \Omega\):\vspace{-1mm}
\begin{equation}
\tag*{(CGL)\(_p\)}
\left\{
\begin{aligned}
&\partial_t u(t, x) \!-\! (\lambda \!+\! i\alpha)\Delta_p u \!-\! (\kappa \!+\! i\beta)|u|^{q-2}u \!-\! \gamma u \!=\! f(t, x)
&&\hspace{-2mm}\mbox{in}\ (t, x) \in [0, T] \times \Omega,\\
&u(t, x) = 0&&\hspace{-2mm}\mbox{on}\ (t, x) \in [0, T] \times \partial\Omega,\\
&u(0, x) = u_0(x)&&\hspace{-2mm}\mbox{in}\ x \in \Omega,
\end{aligned}
\right.
\end{equation}
where \(\lambda, \kappa > 0\), \(\alpha, \beta, \gamma \in \mathbb{R}\) are parameters; 
\(\Delta_p u = \mathop{{\rm div}}(|\nabla u|^{p-2}\nabla u)\) with \(p > \max\left\{1, \frac{2N}{N + 2}\right\}\); 
\(q \geq 2\); 
\(i = \sqrt{-1}\) is the imaginary unit; 
\(u_0: \Omega \rightarrow \mathbb{C}\) denotes an initial value; 
\(f: \Omega \times [0, T] \rightarrow \mathbb{C}\) (\(T > 0\)) is an external force.
Our unknown function \(u:\overline{\Omega} \times [0,\infty) \rightarrow \mathbb{C}\) is a complex valued function.
%%%%%%%%%%%%%%%%%%%%%%%%%%%%%%%%%%%%%%%%%%%%%%%%%%
%%  Extreme Cases  %%%%%%%%%%%%%%%%%%%%%%%%%%%%%%%
%%%%%%%%%%%%%%%%%%%%%%%%%%%%%%%%%%%%%%%%%%%%%%%%%%
In extreme cases, (CGL)\(_p\) gives two well-known equations: 
quasi-linear heat equation (when \(\alpha=\beta=0\)) and nonlinear Schr\"odinger-type equation (when \(\lambda=\kappa=0\)).
Thus the general case of (CGL)\(_p\) could be regarded as ``intermediate'' between nonlinear heat equation and nonlinear Schr\"odinger equation.

%%%%%%%%%%%%%%%%%%%%%%%%%%%%%%%%%%%%%%%%%%%%%%%%%%
%%  Phys. BG  %%%%%%%%%%%%%%%%%%%%%%%%%%%%%%%%%%%%
%%%%%%%%%%%%%%%%%%%%%%%%%%%%%%%%%%%%%%%%%%%%%%%%%%
As a mathematical model for superconductivity, equation (CGL)\(_2\) ((CGL)\(_p\) with \(p=2\)) was introduced by Landau and Ginzburg in 1950 \cite{GL1}.
Subsequently, it was revealed that many nonlinear partial differential equations can be rewritten in the form of (CGL)\(_2\) (\cite{N1}).
Recently Bekki, Harada and Kanai pointed out that some solutions of (CGL)\(_2\) describe nonlinear traveling waves in a human heart (\cite{BHK1}).

%%%%%%%%%%%%%%%%%%%%%%%%%%%%%%%%%%%%%%%%%%%%%%%%%%
%%  Math. BG  %%%%%%%%%%%%%%%%%%%%%%%%%%%%%%%%%%%%
%%%%%%%%%%%%%%%%%%%%%%%%%%%%%%%%%%%%%%%%%%%%%%%%%%
Mathematical studies for the solvability of (CGL)\(_p\) are examined extensively for the case \(p = 2\) by several authors.
The first treatment of the case \(p \neq 2\) is done by Okazawa-Yokota \cite{OY1}.
In \cite{OY1}, they assumed a monotonicity condition on parameters \(\lambda\) and \(\alpha\), that is, \(|\alpha|/\lambda < 2\sqrt{p-1}/(p-2)\) and employ the theory of maximal monotone operators in complex Hilbert spaces.
They also assumed boundedness of domains.

On the other hand, it was proposed in Okazawa-Yokota \cite{OY1} that equation (CGL)\(_2\) can be regarded as a parabolic equation.
Based on this line, the global existence of solutions together with some smoothing effect in general domains is examined in \cite{KOS1}.

%%%%%%%%%%%%%%%%%%%%%%%%%%%%%%%%%%%%%%%%%%%%%%%%%%
%%  This paper  %%%%%%%%%%%%%%%%%%%%%%%%%%%%%%%%%%
%%%%%%%%%%%%%%%%%%%%%%%%%%%%%%%%%%%%%%%%%%%%%%%%%%
In this paper, we show the global solvability of (CGL)\(_p\) in general domains without assuming any upper bound condition on \(q\) and without any additional restriction on parameters \(\lambda, \kappa, \alpha, \beta, p\) such as \(|\alpha|/\lambda < 2\sqrt{p-1}/(p-2)\), i.e., we only assume that parameters lie in the so-called CGL-region (see (\ref{CGL_region})).

To deal with the problem in general domains without excessive assumptions, we cannot directly apply major tools for solving evolution equations: the compactness method, the contraction mapping principle and the monotonicity method.
In fact, for the compactness method, the embedding \({\rm W}^{1,p} \subset {\rm L}^2\) is no longer compact in general domains \(\Omega\); 
for the contraction mapping principle, the Sobolev subcritical condition on \(q\) is needed; 
for using the monotonicity of the operator \(-(\lambda + i\alpha)\Delta_p\), one has to impose more restrictive conditions on \(\lambda\) and \(\alpha\) (cf. Okazawa-Yokota \cite{OY1}).

In order to overcome these difficulties, we first introduce suitable approximate problems for (CGL)\(_p\) and solve the problem in bounded domains \(\{\Omega_k\}_{k \in \mathbb{N}}\) which approximate our domain \(\Omega\) for initial values which are compactly supported in \(\Omega_k\) by applying the compactness method.
Letting \(k \to \infty\) we have a limit function \(U_k \to U\), where \(\{U_k\}_{k \in \mathbb{N}}\) are solutions on \(\Omega_k\).
Then we ensure that \(U\) is the desired solution by combining the diagonal argument, local strong convergences and the standard argument of the convex analysis, more precisely, the definition of subdifferential operators.

%%%%%%%%%%%%%%%%%%%%%%%%%%%%%%%%%%%%%%%%%%%%%%%%%%
%%  Construction  %%%%%%%%%%%%%%%%%%%%%%%%%%%%%%%%
%%%%%%%%%%%%%%%%%%%%%%%%%%%%%%%%%%%%%%%%%%%%%%%%%%
This  paper consists of seven sections.
In \S2, we fix some notations and prepare some preliminaries.
Two main results are stated in \S3 and key inequalities are prepared in \S4.
In \S5, we introduce approximate problems for (CGL) and show their solvability.
\S6 and \S7 are devoted to proofs of main results.

%%%%%%%%%%%%%%%%%%%%%%%%%%%%%%%%%%%%%%%%%%%%%%%%%%
%%%%%%%%%%%%%%%%%%%%%%%%%%%%%%%%%%%%%%%%%%%%%%%%%%
%%  Sec.2. Notations and Preliminaries  %%%%%%%%%%
%%%%%%%%%%%%%%%%%%%%%%%%%%%%%%%%%%%%%%%%%%%%%%%%%%
%%%%%%%%%%%%%%%%%%%%%%%%%%%%%%%%%%%%%%%%%%%%%%%%%%
\section{Notations and Preliminaries}
\label{sec-2}

In this section, we first fix some notations for formulating (CGL)\(_p\) as an evolution equation in a real product function space based on the following identification:
\[
\mathbb{C} \ni u_1 + iu_2 \mapsto (u_1, u_2)^{\rm T} \in \mathbb{R}^2.
\]
Then define the following:
\[
\begin{aligned}
&(U \cdot V)_{\mathbb{R}^2} :=  u_1 v_1 + u_2 v_2, \qquad U=(u_1, u_2)^{\rm T}, \ V=(v_1, v_2)^{\rm T} \in \mathbb{R}^2,\\[1mm]
&\mathbb{L}^2(\Omega) :={\rm L}^2(\Omega) \times {\rm L}^2(\Omega),\quad (U, V)_{\mathbb{L}^2} := (u_1, v_1)_{{\rm L}^2} + (u_2, v_2)_{{\rm L}^2},\\[1mm]
&\quad U=(u_1, u_2)^{\rm T},\quad V=(v_1, v_2)^{\rm T} \in \mathbb{L}^2(\Omega),\\[1mm]
&\mathbb{L}^p(\Omega) := {\rm L}^p(\Omega) \times {\rm L}^p(\Omega),\quad |U|_{\mathbb{L}^p}^p := |u_1|_{{\rm L}^p}^p + |u_2|_{{\rm L}^p}^p\quad\ U \in \mathbb{L}^p(\Omega)\ (1\leq p < \infty).%\\[1mm]
%%&\mathbb{W}^{1, p}_0(\Omega) := {\rm W}^{1, p}_0(\Omega) \times {\rm W}^{1, p}_0(\Omega),\
%%(U, V)_{\mathbb{W}^{1, p}_0} := (u_1, v_1)_{{\rm W}^{1, p}_0} + (u_2, v_2)_{{\rm W}^{1, p}_0}\ \ U, V \in \mathbb{W}^{1, p}_0(\Omega).
\end{aligned}
\]
We use the differential symbols to indicate differential operators which act on each component of \({\mathbb{W}^{1, p}_0}(\Omega)\)-elements:
\[
\begin{aligned}
& D_i = \frac{\partial}{\partial x_i}: \mathbb{W}^{1, p}_0(\Omega) := {\rm W}^{1,p}_0(\Omega) \times {\rm W}^{1,p}_0(\Omega) \rightarrow \mathbb{L}^p(\Omega),\\
&D_i U = (D_i u_1, D_i u_2)^{\rm T} \in \mathbb{L}^p(\Omega) \ (i=1, \cdots, N),\\[2mm]
& \nabla = \left(\frac{\partial}{\partial x_1}, \cdots, \frac{\partial}{\partial x_N}\right): \mathbb{W}^{1, p}_0(\Omega) \rightarrow ({\rm L}^p(\Omega))^{2 N},\\
&\nabla U=(\nabla u_1, \nabla u_2)^T \in ({\rm L}^p(\Omega))^{2 N}.
\end{aligned}
\]

We further define, for \(U=(u_1, u_2)^{\rm T},\ V= (v_1, v_2)^{\rm T},\ W = (w_1, w_2)^{\rm T}\),
\[
\begin{aligned}
&U(x) \cdot \nabla V(x) := u_1(x) \nabla v_1(x) + u_2(x) \nabla v_2(x) \in \mathbb{R}^N,\\[2mm]
&( U(x) \cdot \nabla V(x) ) W(x) := ( u_1(x) ~\! w_1(x) \nabla v_1(x), u_2(x) w_2(x) \nabla v_2(x) )^{\rm T} \in \mathbb{R}^{2N},\\[2mm]
&(\nabla U(x) \cdot \nabla V(x)) := \nabla u_1(x) \cdot \nabla v_1(x) + \nabla u_2(x) \cdot \nabla v_2(x) \in \mathbb{R}^1,\\[2mm]
&|\nabla U(x)| := \left(|\nabla u_1(x)|^2_{\mathbb{R}^N} + |\nabla u_2(x)|^2_{\mathbb{R}^N} \right)^{1/2}.
\end{aligned}
\]

As a realization in \(\mathbb{R}^2\) of the imaginary unit \(i\) in \(\mathbb{C}\), we introduce the following matrix \(I\), which is a linear isometry on \(\mathbb{R}^2\):
\[
I = \begin{pmatrix}
0 & -1\\ 1 & 0
\end{pmatrix}.
\]
We abuse \(I\) for the realization of \(I\) in \(\mathbb{L}^p(\Omega)\), i.e., \(I U = ( - u_2, u_1 )^{\rm T}\) for all \(U = (u_1, u_2)^{\rm T} \in \mathbb{L}^p(\Omega)\).

Then \(I\) satisfies the following properties:
%%%%%%%%%%%%%%%%%%%%%%%%%%%%%%%%%%%%%%%%%%%%%%%%%%
%%  Propperties of I  %%%%%%%%%%%%%%%%%%%%%%%%%%%%
%%%%%%%%%%%%%%%%%%%%%%%%%%%%%%%%%%%%%%%%%%%%%%%%%%

\begin{enumerate}
\item Skew-symmetric property:
\begin{equation}
\label{skew-symmetric_property}
(IU \cdot V)_{\mathbb{R}^2} = - (U \cdot IV)_{\mathbb{R}^2}; \hspace{4mm}
(IU \cdot U)_{\mathbb{R}^2} = 0 \hspace{4mm}
\mbox{for each}\ U, V \in \mathbb{R}^2.
\end{equation}

\item Commutative property with the differential operator \(D_i = \frac{\partial}{\partial x_i}\):
\begin{equation}
\label{commutative_property}
I D_i = D_i I:\mathbb{W}^{1, p}_0 \rightarrow \mathbb{L}^p\ (i=1, \cdots, N).
\end{equation}

\item (In)equalities resulting from the orthogonality of vectors \(V\) and \(IV\):
\begin{align}
\label{consequence_from_orthogonality_1}
&(U \cdot V)_{\mathbb{R}^2}^2 + (U \cdot IV)_{\mathbb{R}^2}^2 = |U|_{\mathbb{R}^2}^2 |V|_{\mathbb{R}^2}^2 \hspace{4mm} \mbox{for each}\ U, V \in \mathbb{R}^2,\\[1mm]
\label{consequence_from_orthogonality_2}
&(U,V)_{\mathbb{L}^2}^2 + (U,IV)_{\mathbb{L}^2}^2 \leq |U|_{\mathbb{L}^2}^2 |V|_{\mathbb{L}^2}^2 \hspace{4mm} \mbox{for each}\ U, V \in \mathbb{L}^2(\Omega).
\end{align}
\end{enumerate}
Properties \eqref{skew-symmetric_property} and \eqref{commutative_property} are obvious.
By virtue of the orthogonality of \(V\) and \(IV\), \eqref{consequence_from_orthogonality_1} is nothing but Pythagorean theorem and \eqref{consequence_from_orthogonality_2} comes from Bessel's inequality.

Let \({\rm H}\) be a Hilbert space and denote by \(\Phi({\rm H})\) the set of all lower semi-continuous convex function \(\phi\) from \({\rm H}\) into \((-\infty, +\infty]\) such that the effective domain of \(\phi\) given by \({\rm D}(\phi) := \{u \in {\rm H}; \ \phi(u) < +\infty \}\) is not empty.
Then for \(\phi \in \Phi({\rm H})\), the subdifferential of \(\phi\) at \(u \in {\rm D}(\phi)\) is defined by
\[
\partial \phi(u) := \{w \in {\rm H};  (w, v - u)_{\rm H} \leq \phi(v)-\phi(u) \hspace{2mm} \mbox{for all}\ v \in {\rm H}\}.
\]
Then \(\partial \phi\) becomes a possibly multivalued maximal monotone operator with domain\\
\({\rm D}(\partial \phi) = \{u \in {\rm H} ;  \partial\phi(u) \neq \emptyset\}\).
However for the discussion below, we have only to consider the case where \(\partial \phi\) is single valued.

We introduce the following amalgam space:
\[
\mathbb{X}_p(\Omega) := \left\{u \in \mathbb{L}^2(\Omega);  \nabla u \in \left(\mathbb{L}^p(\Omega)\right)^N\right\}
\]
with norm
\[
|u|_{\mathbb{X}_p} :=
\left\{
\begin{aligned}
&\left[|u|_{\mathbb{L}^2}^p + |\nabla u|_{\mathbb{L}^p}^p\right]^{1/p}&&\mbox{for}\ p \geq 2,\\
&\left[|u|_{\mathbb{L}^2}^{p'} + |\nabla u|_{\mathbb{L}^p}^{p'}\right]^{1/{p'}}&&\mbox{for}\ \max\left\{1,\frac{2N}{N+2}\right\} < p \leq 2
\end{aligned}
\right.
\]
for all \(u \in \mathbb{X}_p(\Omega)\) and
\[
\frac{1}{p} + \frac{1}{p'} = 1
\]
are H\"older conjugate exponents.

We define \(\mathbb{V}_p(\Omega) := \overline{\mathbb{C}_0^\infty(\Omega)}^{|\cdot|_{\mathbb{X}_p}}\) with a norm \(|\cdot|_{\mathbb{V}_p} := |\cdot|_{\mathbb{X}_p}\), which is also an uniformly convex Banach space since it is a closed subspace of \(\mathbb{X}_p(\Omega)\) (see \cite{AF1} and \ref{AmalSP}).

Now we define two functionals \(\varphi, \ \psi:\mathbb{L}^2(\Omega) \rightarrow [0, +\infty]\) by
\begin{align}
\label{varphi}
&\varphi(U) :=
\left\{
\begin{aligned}
&\frac{1}{p} \displaystyle\int_\Omega |\nabla U(x)|^p dx
&&\mbox{if}\ U \in \mathbb{V}_p(\Omega),\\[3mm]
&+ \infty &&\mbox{if}\ U \in \mathbb{L}^2(\Omega)\setminus\mathbb{V}_p(\Omega),
\end{aligned}
\right.
\\[2mm]
\label{psi}
&\psi(U) :=
\left\{
\begin{aligned}
&\frac{1}{q} \displaystyle\int_\Omega |U(x)|_{\mathbb{R}^2}^q dx
&&\mbox{if}\ U \in \mathbb{L}^q(\Omega) \cap \mathbb{L}^2(\Omega),\\[3mm]
&+\infty &&\mbox{if}\ U \in \mathbb{L}^2(\Omega)\setminus\mathbb{L}^q(\Omega).
\end{aligned}
\right.
\end{align}
We note here that if either \(p > 2\) or \(\Omega\) is bounded, then \({\rm D}(\varphi) = \mathbb{V}_p(\Omega)\) coincides with \(\mathbb{W}^{1, p}_0(\Omega)\).
Then it is easy to see that \(\varphi, \psi \in \Phi(\mathbb{L}^2(\Omega))\) and their subdifferentials are given by
\begin{align}
\label{delvaphi}
&\begin{aligned}[t]
&\partial \varphi(U)=-\Delta_p U = - \nabla(|\nabla U|^{p - 2}\nabla U)\\
&\qquad\mbox{with} \ {\rm D}( \partial \varphi) = \{U \in \mathbb{V}_p(\Omega) ;  \Delta_p U \in \mathbb{L}^2(\Omega)\},\\[2mm]
\end{aligned}\\
\label{delpsi}
&\partial \psi(U) = |U|_{\mathbb{R}^2}^{q-2}U\ {\rm with} \ {\rm D}( \partial \psi) = \mathbb{L}^{2(q-1)}(\Omega) \cap \mathbb{L}^2(\Omega).
\end{align}
Furthermore for any $\mu>0$, we can define Yosida approximations
 \(\partial \varphi_\mu,\ \partial \psi_\mu\) of \(\partial \varphi,\ \partial \psi\) by
\begin{align}
\label{Yosida:varphi}
&\partial \varphi_\mu(U) := \frac{1}{\mu}(U - J_\mu^{\partial \varphi}U) 
= \partial \varphi(J_\mu^{\partial \varphi} U), 
\quad J_\mu^{\partial \varphi} : = ( 1 + \mu \partial \varphi)^{-1},
\\[2mm]
\label{Yosida:psi}
&\partial \psi_\mu(U) := \frac{1}{\mu} (U - J_\mu^{\partial \psi} U) 
= \partial \psi( J_\mu^{\partial \psi} U ), 
\quad  J_\mu^{\partial \psi} : = ( 1 + \mu \partial \psi)^{-1}.
\end{align}
Then it is well known that \(\partial \varphi_\mu, \ \partial \psi_\mu\) are Lipschitz continuous on \(\mathbb{L}^2(\Omega)\) (see \cite{S1}).

Here for later use, we prepare some fundamental properties of \(I\) in connection with \(\partial \varphi,\ \partial \psi,\  \partial \varphi_\mu,\ \partial \psi_\mu\).
%%%%%%%%%%%%%%%%%%%%%%%%%%%%%%%%%%%%%%%%%%%%%%%%%%
%%  Lemma 1.  %%%%%%%%%%%%%%%%%%%%%%%%%%%%%%%%%%%%
%%%%%%%%%%%%%%%%%%%%%%%%%%%%%%%%%%%%%%%%%%%%%%%%%%
\begin{Lem}[(c.f. \cite{KOS1} Lemma 2.1)]\label{Lem:2.1}
The following orthogonality properties hold.
\begin{align}
\label{orth:IU}
&(\partial \varphi(U), I U)_{\mathbb{L}^2} = 0\quad 
\forall U \in {\rm D}(\partial \varphi),\quad 
(\partial \psi(U), I U)_{\mathbb{L}^2} = 0\quad \forall U \in {\rm D}(\partial \psi), 
\\[2mm]
\label{orth:mu:IU}
&(\partial \varphi_\mu(U), I U)_{\mathbb{L}^2} = 0,\quad 
(\partial \psi_\mu(U), I U)_{\mathbb{L}^2} = 0 \quad 
\forall U \in \mathbb{L}^2(\Omega), 
\\[2mm]
%\label{orth:Ivarphi}
%&(\partial \varphi(U), I \partial \varphi_\mu(U))_{\mathbb{L}^2} = 0\quad 
%\forall U \in {\rm D}(\partial \varphi),\\[2mm]
\label{orth:Ipsi}
&(\partial \psi(U), I \partial \psi_\mu(U))_{\mathbb{L}^2} = 0\quad 
\forall U \in {\rm D}(\partial \psi).
\end{align}
\end{Lem}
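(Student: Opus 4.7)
The plan is to treat the three identities in turn, in each case reducing the $\mathbb{L}^2$-inner product to a pointwise statement in $\mathbb{R}^2$ and then invoking either the skew-symmetry \eqref{skew-symmetric_property}, the commutativity \eqref{commutative_property} of $I$ with $\nabla$, or the explicit form of $\partial\psi$ recorded in \eqref{delpsi}.

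For \eqref{orth:IU}, the $\partial\psi$ part is immediate from \eqref{delpsi} and \eqref{skew-symmetric_property}, since pointwise $\bigl(\partial\psi(U)(x)\cdot IU(x)\bigr)_{\mathbb{R}^2} = |U(x)|_{\mathbb{R}^2}^{q-2}\bigl(U(x)\cdot IU(x)\bigr)_{\mathbb{R}^2} = 0$. For $\partial\varphi$, I note that $U\in D(\partial\varphi)\subset\mathbb{V}_p(\Omega)$ and that $I$ is an $\mathbb{L}^p$-isometry commuting with $\nabla$, so $IU\in\mathbb{V}_p(\Omega)$ as well; integration by parts then gives
\[
(\partial\varphi(U),IU)_{\mathbb{L}^2} = \int_\Omega |\nabla U|^{p-2}\bigl(\nabla U\cdot\nabla(IU)\bigr)\,dx.
\]
Applying \eqref{commutative_property} componentwise, $D_i(IU)=I\,D_iU$, and \eqref{skew-symmetric_property} yields $\bigl(\nabla U\cdot\nabla(IU)\bigr) = \sum_{i=1}^N(D_iU\cdot ID_iU)_{\mathbb{R}^2} = 0$ pointwise, so the integral vanishes.

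For \eqref{orth:mu:IU}, set $V_\mu := J_\mu^{\partial\varphi}U$, so that $U = V_\mu + \mu\,\partial\varphi(V_\mu)$ and $\partial\varphi_\mu(U) = \partial\varphi(V_\mu)$. Expanding the inner product,
\[
(\partial\varphi_\mu(U),IU)_{\mathbb{L}^2} = (\partial\varphi(V_\mu),IV_\mu)_{\mathbb{L}^2} + \mu\,(\partial\varphi(V_\mu),I\partial\varphi(V_\mu))_{\mathbb{L}^2}.
\]
The first term vanishes by \eqref{orth:IU} since $V_\mu\in D(\partial\varphi)$, and the second vanishes by pointwise application of \eqref{skew-symmetric_property} to $W(x):=\partial\varphi(V_\mu)(x)\in\mathbb{R}^2$. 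The same argument with $J_\mu^{\partial\psi}$ in place of $J_\mu^{\partial\varphi}$ gives the analogous identity for $\partial\psi_\mu$.

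For \eqref{orth:Ipsi}, let $W_\mu := J_\mu^{\partial\psi}U$. From \eqref{delpsi}, the resolvent equation $U = W_\mu + \mu\,\partial\psi(W_\mu)$ reads $U = \bigl(1 + \mu|W_\mu|_{\mathbb{R}^2}^{q-2}\bigr)W_\mu$, a nonnegative scalar multiple of $W_\mu$ at a.e.\ $x\in\Omega$; hence $U(x)$ and $W_\mu(x)$ are parallel in $\mathbb{R}^2$. Consequently $\partial\psi(U)(x) = |U(x)|_{\mathbb{R}^2}^{q-2}U(x)$ and $\partial\psi_\mu(U)(x) = |W_\mu(x)|_{\mathbb{R}^2}^{q-2}W_\mu(x)$ are also pointwise parallel, and \eqref{skew-symmetric_property} yields $\bigl(\partial\psi(U)(x)\cdot I\partial\psi_\mu(U)(x)\bigr)_{\mathbb{R}^2}=0$ a.e., giving the result after integration. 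No serious obstacle arises: the only mildly subtle point is justifying $IU\in\mathbb{V}_p(\Omega)$ for the integration by parts in the first step, which is immediate from the fact that $I$ is an isometry commuting with $\nabla$.
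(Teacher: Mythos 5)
Your proof is correct, and it follows exactly the argument the paper intends (it defers to Lemma 2.1 of \cite{KOS1}): the pointwise skew-symmetry \eqref{skew-symmetric_property} together with the commutativity \eqref{commutative_property} for the $\partial\varphi$ part, the resolvent decomposition $U=J_\mu U+\mu\,\partial\varphi(J_\mu U)$ for the Yosida terms, and the radial (parallel) structure of $\partial\psi$ for \eqref{orth:Ipsi}. Your observation that $IU\in\mathbb{V}_p(\Omega)$, needed to justify the integration by parts, is the right point to flag and is indeed immediate.
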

These properties can be verified in the same way as in the proof of Lemma 2.1 of \cite{KOS1}.
However the following orthogonality
\[
(\partial\varphi_\mu(U), I\partial\varphi(U))_{\mathbb{L}^2} = 0
\]
does not hold true anymore, since the nonlinear operator \(\partial\varphi\) fails to be self-adjoint.

Moreover we can show that \(\lambda\partial\varphi(U) + \kappa\partial\psi(U)\) is also represented as a single subdifferential operator.
To see this, we use the following criterion for the maximal monotonicity of a sum of two maximal monotone operators.
%%%%%%%%%%%%%%%%%%%%%%%%%%%%%%%%%%%%%%%%%%%%%%%%%%
%%  Prop 1.  %%%%%%%%%%%%%%%%%%%%%%%%%%%%%%%%%%%%%
%%%%%%%%%%%%%%%%%%%%%%%%%%%%%%%%%%%%%%%%%%%%%%%%%%
\begin{Prop}[(Br\'ezis, H. \cite{B1} Theorem 9)]
\label{angle}
Let \(B\) be maximal monotone in \({\rm H}\) and \(\phi \in \Phi({\rm H})\).
Suppose
\begin{equation}
\label{angle1}
\phi((1+\mu B)^{-1}u) \leq \phi(u), \hspace{4mm} \forall \mu>0, \hspace{2mm} \forall u \in {\rm D}(\phi).
\end{equation}
Then \(\partial \phi + B\) is maximal monotone in  \({\rm H}\).
\end{Prop}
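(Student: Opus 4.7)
The plan is to invoke Minty's surjectivity characterization: since $\partial\phi + B$ is monotone as a sum of two monotone operators, it suffices to prove $R(I + \partial\phi + B) = {\rm H}$, i.e.\ to fix $f \in {\rm H}$ arbitrarily and produce $u$ together with $v \in \partial\phi(u)$ and $\eta \in Bu$ satisfying $u + v + \eta = f$. I would first regularize $B$ by its Yosida approximation $B_\lambda := \lambda^{-1}(I - J_\lambda^B)$, which is monotone and Lipschitz continuous on ${\rm H}$. A Lipschitz monotone perturbation of a maximal monotone operator remains maximal monotone, so $\partial\phi + B_\lambda$ is maximal monotone and $I + \partial\phi + B_\lambda$ is surjective; hence for each $\lambda > 0$ there exist $u_\lambda \in {\rm D}(\partial\phi)$ and $v_\lambda \in \partial\phi(u_\lambda)$ with $u_\lambda + v_\lambda + B_\lambda u_\lambda = f$.

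The crucial use of hypothesis \eqref{angle1} is the following \emph{angle condition}: since $u_\lambda \in {\rm D}(\partial\phi) \subset {\rm D}(\phi)$, applying the subdifferential inequality $v_\lambda \in \partial\phi(u_\lambda)$ at the test point $J_\mu^B u_\lambda$ and invoking $\phi(J_\mu^B u_\lambda) \leq \phi(u_\lambda)$ gives $(v_\lambda, u_\lambda - J_\mu^B u_\lambda)_{\rm H} \geq 0$, that is,
\[
(v_\lambda, B_\mu u_\lambda)_{\rm H} \geq 0 \qquad \forall \mu > 0,
\]
and in particular $(v_\lambda, B_\lambda u_\lambda)_{\rm H} \geq 0$. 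Pairing the regularized equation with $u_\lambda - w_0$ for a fixed pair $(w_0, w_0^*) \in \partial\phi$, together with the monotonicity of $B_\lambda$ based at $w_0$, then produces uniform bounds on $|u_\lambda|_{\rm H}$, $|v_\lambda|_{\rm H}$ and $|B_\lambda u_\lambda|_{\rm H}$, whence $|u_\lambda - J_\lambda^B u_\lambda|_{\rm H} = \lambda |B_\lambda u_\lambda|_{\rm H} \to 0$. Extracting a subsequence, I have $u_\lambda \rightharpoonup u$, $J_\lambda^B u_\lambda \rightharpoonup u$, $v_\lambda \rightharpoonup v$ and $B_\lambda u_\lambda \rightharpoonup \eta$ weakly in ${\rm H}$, so passing to the weak limit in the equation yields $u + v + \eta = f$.

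The main obstacle is the final identification step, namely $v \in \partial\phi(u)$ and $\eta \in Bu$. Testing the equation against $u_\lambda$ and exploiting the non-negativity $(v_\lambda, B_\lambda u_\lambda)_{\rm H} \geq 0$ allows one to split and upper-bound $\limsup(v_\lambda, u_\lambda)_{\rm H}$ by $(v, u)_{\rm H}$ and $\limsup(B_\lambda u_\lambda, J_\lambda^B u_\lambda)_{\rm H}$ by $(\eta, u)_{\rm H}$ independently. Minty's lemma then gives $v \in \partial\phi(u)$ (via lower semi-continuity of $\phi$) and $\eta \in Bu$ (via $B_\lambda u_\lambda \in B(J_\lambda^B u_\lambda)$ and maximal monotonicity of $B$). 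Without hypothesis \eqref{angle1}, the cross term $(v_\lambda, B_\lambda u_\lambda)_{\rm H}$ could be strongly negative and the two $\limsup$ estimates could not be decoupled; the compatibility condition is precisely the ingredient that enables this decoupling and thus the simultaneous identification of both limits.
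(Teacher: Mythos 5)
First, a point of reference: the paper itself gives no proof of Proposition \ref{angle} --- it is quoted directly from Br\'ezis's monograph --- so your argument can only be compared with the classical proof there. Your skeleton is exactly that classical proof: Yosida-regularize \(B\), solve \(u_\lambda + \partial\phi(u_\lambda) + B_\lambda u_\lambda \ni f\) by the Lipschitz-perturbation theorem, derive the angle condition \((v_\lambda, B_\mu u_\lambda)_{\rm H} \geq 0\) from \eqref{angle1} via the subdifferential inequality at \(J_\mu^B u_\lambda\), obtain uniform bounds, and pass to the limit. The derivation of the angle condition is correct and is indeed the one place where \eqref{angle1} enters.

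The gap is in the final identification step, and in the role you assign to the angle condition there. Pairing the equation with \(u_\lambda\) yields \((v_\lambda, u_\lambda) + (B_\lambda u_\lambda, u_\lambda) = (f, u_\lambda) - |u_\lambda|_{\rm H}^2\); the cross term \((v_\lambda, B_\lambda u_\lambda)_{\rm H}\) simply does not occur in this identity, so its non-negativity cannot be what decouples the two \(\limsup\) estimates. Moreover \(\limsup(a_\lambda + b_\lambda) \leq A + B\) never splits into \(\limsup a_\lambda \leq A\) and \(\limsup b_\lambda \leq B\) on its own; what actually rescues the argument is the Br\'ezis--Crandall--Pazy observation that monotonicity of \(\partial\phi\) and of \(B\) \emph{separately} forces \(\liminf (v_\lambda, u_\lambda - u)_{\rm H} \geq 0\) and \(\liminf (B_\lambda u_\lambda, J_\lambda^B u_\lambda - u)_{\rm H} \geq 0\), with strict inequality whenever the corresponding weak limit fails to lie in the graph; these one-sided bounds, combined with \(\limsup\) of the sum being \(\leq 0\), give the decoupling. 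You should state and prove that lemma --- or, closer to Br\'ezis's own route, show that \(u_\lambda\) is strongly Cauchy by subtracting the equations for two parameters \(\lambda, \mu\) and estimating \((B_\lambda u_\lambda - B_\mu u_\mu, u_\lambda - u_\mu)_{\rm H} \geq -C(\lambda+\mu)\), after which demiclosedness identifies both limits at once. The true role of \eqref{angle1} is upstream: pairing the equation with \(B_\lambda u_\lambda\) and using \((v_\lambda, B_\lambda u_\lambda)_{\rm H} \geq 0\) is what produces the uniform bound on \(|B_\lambda u_\lambda|_{\rm H}\) (your pairing with \(u_\lambda - w_0\) bounds \(|u_\lambda|_{\rm H}\) only, and \(w_0\) must be taken in \({\rm D}(\partial\phi) \cap {\rm D}(B)\), which is nonempty thanks to \eqref{angle1}); without that bound, \(\lambda B_\lambda u_\lambda \not\to 0\) and the whole limit passage collapses.
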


%%%%%%%%%%%%%%%%%%%%%%%%%%%%%%%%%%%%%%%%%%%%%%%%%%
%%  Lem 2.  %%%%%%%%%%%%%%%%%%%%%%%%%%%%%%%%%%%%%%
%%%%%%%%%%%%%%%%%%%%%%%%%%%%%%%%%%%%%%%%%%%%%%%%%%
\begin{Lem}
\label{fjdkjslslsllll}
Let \(\phi = \varphi\) and \(B = \partial\psi\) given by {\rm (\ref{varphi})} and {\rm (\ref{delpsi})} respectively, then the inequality {\rm (\ref{angle1})} holds.
\end{Lem}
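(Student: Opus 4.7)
The plan is to verify \eqref{angle1} via a pointwise gradient contraction. Given $U \in D(\varphi)$ and $\mu > 0$, set $V := (1+\mu\partial\psi)^{-1}U$. Because $\partial\psi(W) = |W|_{\mathbb{R}^2}^{q-2}W$ is a purely pointwise operator, $V$ is characterized by the pointwise equation $V(x) + \mu|V(x)|_{\mathbb{R}^2}^{q-2}V(x) = U(x)$ a.e.\ in $\Omega$. The map $w \mapsto w + \mu|w|^{q-2}w$ on $\mathbb{R}^2$ sends each ray through the origin into itself, so its inverse $F:\mathbb{R}^2 \to \mathbb{R}^2$ is radial: $F(w) = \bigl(\sigma(|w|)/|w|\bigr)w$ for $w \neq 0$ and $F(0)=0$, where $\sigma:[0,\infty)\to[0,\infty)$ is the unique $C^1$ inverse of $s \mapsto s+\mu s^{q-1}$. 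In particular $V = F\circ U$ a.e.

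Next I would analyse $F$. Differentiating $\sigma(s)+\mu\sigma(s)^{q-1}=s$ gives $\sigma'(s) = 1/(1+\mu(q-1)\sigma(s)^{q-2}) \in (0,1]$, and $\sigma(s)\le s$ immediately from the defining relation. For $w \neq 0$, decomposing $DF(w)$ into its radial and tangential parts shows that its two eigenvalues are exactly $\sigma'(|w|)$ and $\sigma(|w|)/|w|$, both of which lie in $(0,1]$. Hence $\|DF(w)\|_{\mathrm{op}}\le 1$ on $\mathbb{R}^2\setminus\{0\}$, and $F$ is globally $1$-Lipschitz.

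I would then compute gradients. Write $U = |U|\,e$ with $e = U/|U|$ on $\{U\ne 0\}$. Using $|e|^2=1$, hence $\sum_{j=1}^{2} e_j\nabla e_j = 0$, the product rule produces the orthogonal decompositions
\begin{equation*}
|\nabla U|^{2} = \bigl|\nabla |U|\bigr|^{2} + |U|^{2}\sum_{j=1}^{2}|\nabla e_j|^{2},\qquad
|\nabla V|^{2} = \sigma'(|U|)^{2}\bigl|\nabla |U|\bigr|^{2} + \sigma(|U|)^{2}\sum_{j=1}^{2}|\nabla e_j|^{2}.
\end{equation*}
Since $\sigma'(s)\le 1$ and $\sigma(s)\le s$, this yields $|\nabla V(x)|\le |\nabla U(x)|$ a.e.\ on $\{U\neq 0\}$. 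On $\{U=0\}$ one has $V=0$, and by the standard chain rule for Lipschitz compositions $\nabla V = 0$ a.e.\ there as well. Integrating the $p$-th power gives $\int_\Omega|\nabla V|^p\,dx \le \int_\Omega|\nabla U|^p\,dx$, that is, $\varphi(V)\le\varphi(U)$. Finally, since $F$ is Lipschitz with $F(0)=0$, the composition $V = F\circ U$ preserves the class $\mathbb{V}_p(\Omega)$ (approximate $U$ by $\mathbb{C}_0^\infty$-functions, apply $F$, and use the Lipschitz bound), so $V\in D(\varphi)$ as required.

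The only delicate point is the behaviour on the nodal set $\{U=0\}$, where the polar decomposition $U=|U|e$ is not available. I would handle this in the routine way by replacing $|U|$ with $(|U|^{2}+\varepsilon^{2})^{1/2}$, redoing the computation (all the algebraic identities above survive verbatim with an $\varepsilon$-regularised $e$), and then passing to the limit $\varepsilon\downarrow 0$ using dominated convergence together with the fact that $\nabla U=0$ a.e.\ on $\{U=0\}$. Everything else in the proof is essentially bookkeeping.
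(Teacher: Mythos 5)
Your proof is correct in substance and arrives at the same gradient contraction that underlies the paper's argument, but by a genuinely different route. The paper never inverts the resolvent explicitly: it takes smooth approximants $U_n \in \mathbb{C}^1_0(\Omega)$ of $U$ in $\mathbb{V}_p$, uses the (cited) fact that $(1+\mu\partial\psi)^{-1}$ preserves $\mathbb{C}^1_0(\Omega)$, differentiates the identity $(1+\mu\partial\psi)V_n = U_n$ classically, pairs the result with $|\nabla V_n|^{p-2}\nabla V_n$ to get $|\nabla V_n|^p \leq \nabla U_n\cdot|\nabla V_n|^{p-2}\nabla V_n$ (the cross term is nonnegative because $q\geq 2$), integrates with Young's inequality to obtain $\varphi(V_n)\leq\varphi(U_n)$, and concludes by weak lower semicontinuity. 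You instead identify the resolvent as the radial map $F(w)=(\sigma(|w|)/|w|)w$ and bound the two eigenvalues $\sigma'(|w|)$ and $\sigma(|w|)/|w|$ of $DF(w)$ by $1$; this yields the stronger pointwise bound $|\nabla V|\leq|\nabla U|$ directly on $U$ itself, at the price of invoking the chain rule for Lipschitz compositions of Sobolev functions. Both are legitimate; the paper's version sidesteps the measure-theoretic issues on the nodal set entirely by working with classical derivatives of smooth functions, while yours makes the contraction mechanism more transparent.

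Two points in your write-up need tightening. First, the polar-decomposition identities do not survive ``verbatim'' under the regularisation $(|U|^2+\varepsilon^2)^{1/2}$: the regularised direction field $e_\varepsilon = U/(|U|^2+\varepsilon^2)^{1/2}$ no longer has unit length, so $\sum_j e_j\nabla e_j = 0$ fails. But this detour is unnecessary --- once you have $\nabla V = DF(U)\nabla U$ a.e.\ and $\|DF(w)\|_{\mathrm{op}}\leq 1$, the estimate $|\nabla V|\leq\|DF(U)\|_{\mathrm{op}}\,|\nabla U|$ on the Frobenius norms already gives the pointwise contraction without any polar decomposition. Second, the closing claim that $V\in\mathbb{V}_p(\Omega)$ follows ``by the Lipschitz bound'' is too quick: Lipschitz continuity of $F$ gives $F(U_n)\to F(U)$ only in $\mathbb{L}^2$, not convergence of the gradients in $\mathbb{L}^p$. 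The correct finish is the one the paper uses: $F(U_n)\in\mathbb{V}_p$, $\varphi(F(U_n))\leq\varphi(U_n)$ is bounded, so $\nabla F(U_n)$ has a weakly convergent subsequence in $\mathbb{L}^p$ whose limit must be $\nabla V$; since $\mathbb{V}_p$ is a closed subspace it is weakly closed, so $V\in\mathbb{V}_p$, and lower semicontinuity gives $\varphi(V)\leq\varphi(U)$.
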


%%%%%%%%%%%%%%%%%%%%%%%%%%%%%%%%%%%%%%%%%%%%%%%%%%
%%  Prf of Lem 2.  %%%%%%%%%%%%%%%%%%%%%%%%%%%%%%%
%%%%%%%%%%%%%%%%%%%%%%%%%%%%%%%%%%%%%%%%%%%%%%%%%%
\begin{proof}
We can show \((1 + \mu \partial \psi)^{-1} \mathbb{C}^1_0(\Omega) \subset \mathbb{C}^1_0(\Omega)\) in the same way as in \cite{KOS1}.
Let \(U_n \in \mathbb{C}^1_0(\Omega)\) and \(U_n \rightarrow U\) in \(\mathbb{V}_p\).
Then \(V_n := (1 + \mu \partial \psi)^{-1}U_n \in \mathbb{C}^1_0(\Omega)\) satisfy
\[
\begin{aligned}
|V_n - V|_{\mathbb{L}^2} = |(1 + \mu \partial \psi)^{-1}U_n - (1+\mu \partial \psi)^{-1}U|_{\mathbb{L}^2} \leq |U_n - U|_{\mathbb{L}^2} \rightarrow 0 \hspace{4mm} \mbox{as}\ n \rightarrow \infty,
\end{aligned}
\]
whence it follows that \(V_n \rightarrow V\) in \(\mathbb{L}^2(\Omega)\).
Next differentiating \((1 + \mu \partial \psi)V_n := U_n\), we obtain
\begin{equation}
\label{adfjakfjdkaj}
\nabla U_n(x)=
\begin{aligned}[t]
&(1+\mu|V_n(x)|_{\mathbb{R}^2}^{q-2})\nabla V_n(x)\\
& + \mu(q-2)|V_n(x)|_{\mathbb{R}^2}^{q-4} (V_n(x) \cdot \nabla V_n(x)) V_n(x).
\end{aligned}
\end{equation}
Multiplying (\ref{adfjakfjdkaj}) by \(|\nabla V_n(x)|^{p - 2}\nabla V_n\), we get
\[
|\nabla V_n(x)|^p \leq (\nabla U_n(x) \cdot |\nabla V_n(x)|^{p - 2}\nabla V_n(x))\quad\mbox{a.e.}\ x \in \Omega.
\]
We integrate both sides over \(\Omega\) and apply Young's inequality to obtain
\begin{equation}
\label{asasa}
\varphi(V_n) \leq \varphi(U_n).
\end{equation}
Passing to the limit, the following inequality holds by the lower semi-continuity of the norm \(|\cdot|_{\mathbb{L}^p}\):
\[
\varphi(V) \leq \liminf_{n \to \infty}\varphi(V_n) \leq \lim_{n \to \infty}\varphi(U_n) = \varphi(U),
\]
whence follows \(V \in \mathbb{V}_p\) and (\ref{angle1}).
\end{proof}

%%%%%%%%%%%%%%%%%%%%%%%%%%%%%%%%%%%%%%%%%%%%%%%%%%
%%  Reduction  %%%%%%%%%%%%%%%%%%%%%%%%%%%%%%%%%%%
%%%%%%%%%%%%%%%%%%%%%%%%%%%%%%%%%%%%%%%%%%%%%%%%%%
Now we see that \(\lambda \partial \varphi + \kappa \partial \psi\) is maximal monotone for all \(\lambda, \kappa >0\).
Therefore, since the trivial inclusion \(\lambda \partial \varphi + \kappa \partial \psi \subset \partial (\lambda \varphi + \kappa \psi)\) holds, we obtain the following relation:
\begin{equation}
\label{adfjdhss}
\lambda \partial \varphi + \kappa \partial \psi = \partial (\lambda \varphi + \kappa \psi)\quad\mbox{for all}\ \lambda, \kappa >0.
\end{equation}

Thus (CGL)\(_p\) can be reduced to the following evolution equation:
\[
\tag*{(ACGL)\(_p\)}
\left\{
\begin{aligned}
&\frac{dU}{dt}(t) \!+\! \partial (\lambda \varphi \!+\! \kappa \psi)(U) \!+\! \alpha I \partial \varphi(U) \!+\! \beta I \partial \psi(U) \!-\! \gamma U \!=\! F(t),\quad t \in (0,T),\\
&U(0) =U_0,
\end{aligned}
\right.
\]
where \(f(t, x) = f_1(t, x) + i f_2(t, x)\) is identified with \(F(t) = (f_1(t, \cdot), f_2(t, \cdot))^{\rm T} \in \mathbb{L}^2(\Omega)\).

%%%%%%%%%%%%%%%%%%%%%%%%%%%%%%%%%%%%%%%%%%%%%%%%%%
%%%%%%%%%%%%%%%%%%%%%%%%%%%%%%%%%%%%%%%%%%%%%%%%%%
%%  Main Result  %%%%%%%%%%%%%%%%%%%%%%%%%%%%%%%%%
%%%%%%%%%%%%%%%%%%%%%%%%%%%%%%%%%%%%%%%%%%%%%%%%%%
%%%%%%%%%%%%%%%%%%%%%%%%%%%%%%%%%%%%%%%%%%%%%%%%%%
\section{Main Results}
\label{sec-3}

In order to state our main results, we introduce the CGL-region (cf. \cite{KOS1}) given by:
\begin{equation}
\label{CGL_region}
\begin{aligned}[t]
{\rm CGL}(r)&:= 
\left\{(x, y) \in \mathbb{R}^2 ; xy \geq 0\ \mbox{or}\ \frac{|xy|-1}{|x|+|y|} <r
\right\}\\
&= {\rm S}_1(r) \cup {\rm S}_2(r) \cup\ {\rm S}_3(r) \cup {\rm S}_4(r),
\end{aligned}
\end{equation}
where \(S_i(r)\) (\(i = 1, 2, 3, 4\)) is given by
\begin{align*}
{\rm S}_1(r)&:=\left\{(x,y) \in \mathbb{R}^2; |x| \leq r \right\},
&{\rm S}_2(r)&:=\left\{(x,y)  \in \mathbb{R}^2; |y| \leq r \right\},\\
{\rm S}_3(r)&:=\left\{(x,y) \in \mathbb{R}^2; xy>0 \right\},
&{\rm S}_4(r)&:=\left\{(x,y)  \in \mathbb{R}^2; |1+xy| < r |x-y|\right\}.
\end{align*}
\begin{figure}[H]
\begin{center}
\begin{tikzpicture}[domain=-5:5,scale=0.7]
%%%%%%%%%%%%%%%%%%%%%%%%%%%%%%%%%%%%%%%%%%%%%%%%%%
%% Area Fill  %%%%%%%%%%%%%%%%%%%%%%%%%%%%%%%%%%%%
%%%%%%%%%%%%%%%%%%%%%%%%%%%%%%%%%%%%%%%%%%%%%%%%%%
\filldraw[blue, opacity =.3] (-0.8,-5) -- (0.8,-5) -- (0.8,5) -- (-0.8,5) -- cycle;
\filldraw[magenta, opacity =.3] (-5,-0.8) -- (-5,0.8) -- (5,0.8) -- (5,-0.8) -- cycle;
\filldraw[lime, opacity =.3] (0,0) -- (0,5) -- (5,5) -- (5,0) -- cycle;
\filldraw[lime, opacity =.3] (0,0) -- (0,-5) -- (-5,-5) -- (-5,0) -- cycle;
%\filldraw[yellow, opacity =.3, variable=\x, domain=-5:(-25/21)] plot(\x, {0.8-1.64/(\x+0.8)}) -- (-0.8,5) -- (-0.8,0.8) -- (-5,0.8) -- cycle;
%\filldraw[yellow, opacity =.3, variable=\x, domain=-5:(15/29)] plot(\x, {-0.8-1.64/(\x-0.8)}) -- (-0.8,5) -- (-0.8,0.8) -- (-5,0.8) -- cycle;
\filldraw[yellow, opacity =.3, variable=\x] plot[domain=-5:(15/29)](\x, {-0.8-1.64/(\x-0.8)}) -- (-25/21,5) -- plot[domain=(-25/21):-5](\x, {0.8-1.64/(\x+0.8)}) -- cycle;
%\filldraw[yellow, opacity =.3, variable=\x, domain=(-15/29):5] plot(\x, {0.8-1.64/(\x+0.8)}) -- (5,-0.8) -- (0.8,-0.8) -- (0.8,-5) -- cycle;
%\filldraw[yellow, opacity =.3, variable=\x, domain=(25/21):5] plot(\x, {-0.8-1.64/(\x-0.8)}) -- (5,-0.8) -- (0.8,-0.8) -- (0.8,-5) -- cycle;
\filldraw[yellow, opacity =.3, variable=\x] plot[domain=5:(25/21)](\x, {-0.8-1.64/(\x-0.8)}) -- (-15/29,-5) -- plot[domain=(-15/29):5](\x, {0.8-1.64/(\x+0.8)}) -- cycle;
%%%%%%%%%%%%%%%%%%%%%%%%%%%%%%%%%%%%%%%%%%%%%%%%%%
%% Axis  %%%%%%%%%%%%%%%%%%%%%%%%%%%%%%%%%%%%%%%%%
%%%%%%%%%%%%%%%%%%%%%%%%%%%%%%%%%%%%%%%%%%%%%%%%%%
\draw[-latex, thick] (-5,0) -- (5,0);
\draw[-latex, thick] (0,-5) -- (0,5);
%%%%%%%%%%%%%%%%%%%%%%%%%%%%%%%%%%%%%%%%%%%%%%%%%%
%% Lines  %%%%%%%%%%%%%%%%%%%%%%%%%%%%%%%%%%%%%%%%
%%%%%%%%%%%%%%%%%%%%%%%%%%%%%%%%%%%%%%%%%%%%%%%%%%
\draw[black, dashed] (-5,0.8) -- (5,0.8);
\draw[black, dashed] (-5,-0.8) -- (5,-0.8);
\draw[black, dashed] (0.8,-5) -- (0.8,5);
\draw[black, dashed] (-0.8,-5) -- (-0.8,5);
\draw[black, dashed, variable=\x, domain=-5:(-25/21)] plot(\x, {0.8-1.64/(\x+0.8)});
\draw[black, dashed, variable=\x, domain=(-15/29):5] plot(\x, {0.8-1.64/(\x+0.8)});
\draw[black, dashed, variable=\x, domain=-5:(15/29)] plot(\x, {-0.8-1.64/(\x-0.8)});
\draw[black, dashed, variable=\x, domain=(25/21):5] plot(\x, {-0.8-1.64/(\x-0.8)});
%%%%%%%%%%%%%%%%%%%%%%%%%%%%%%%%%%%%%%%%%%%%%%%%%%
%% Points  %%%%%%%%%%%%%%%%%%%%%%%%%%%%%%%%%%%%%%%
%%%%%%%%%%%%%%%%%%%%%%%%%%%%%%%%%%%%%%%%%%%%%%%%%%
%\path[fill=white, draw=black] (0,0) circle (2pt);
%%%%%%%%%%%%%%%%%%%%%%%%%%%%%%%%%%%%%%%%%%%%%%%%%%
%% Labels  %%%%%%%%%%%%%%%%%%%%%%%%%%%%%%%%%%%%%%%
%%%%%%%%%%%%%%%%%%%%%%%%%%%%%%%%%%%%%%%%%%%%%%%%%%
\node[anchor=north east] at (0,0) {{\small\(O\)}};
\node[anchor=north east] at (5,0) {{\small\(x\)}};
\node[anchor=north east] at (0,5) {{\small\(y\)}};
\node[anchor=north west] at (0,0.8) {{\small\(r\)}};
\node[anchor=north east] at (0,-0.8) {{\small\(-r\)}};
\node[anchor=north east] at (0.8,0) {{\small\(r\)}};
\node[anchor=north east] at (-0.8,0) {{\small\(-r\)}};
\node[anchor=south east] at (0.8,0.9) {{\small\({\rm S}_1(r)\)}};
\node[anchor=north west] at (0.8,0.8) {{\small\({\rm S}_2(r)\)}};
\node at (2.0,2.0) {{\small\({\rm S}_3(r)\)}};
\node at (-2.0,-2.0) {{\small\({\rm S}_3(r)\)}};
\node at (1.6,-1.6) {{\small\({\rm S}_4(r)\)}};
\node at (-1.6,1.6) {{\small\({\rm S}_4(r)\)}};
\node[anchor=north west] at (2.0,-2.0) {{\small\(y = -r - \frac{1+r^2}{x-r}\)}};
\node[anchor=south east] at (-2.0,2.0) {{\small\(y = r - \frac{1+r^2}{x+r}\)}};
\node[anchor=north east] at (5,5) {{\small\(\begin{aligned}\left\{\begin{aligned}x &= \frac{\alpha}{\lambda},\\y&=\frac{\beta}{\kappa},\end{aligned}\right.\\r = c_q^{-1}.\end{aligned}\)}};
\end{tikzpicture}
\end{center}
\caption{CGL region}
\end{figure}

Also, we use the parameter \(c_q \in [0, \infty)\) measuring the strength of the nonlinearity:
\begin{equation}
\label{strength_of_nonlinearity}
c_q := \frac{q-2}{2\sqrt{q-1}}.
\end{equation}

We assume that possibly unbounded domain \(\Omega\) have a sequence of bounded subsets with smooth boundary such that
\begin{enumerate}\renewcommand{\labelenumi}{(\roman{enumi})}
\item $\Omega_k \subset \Omega_{k+1} \subset \Omega$ for each $k \in \mathbb{N}$,
\item for all bounded $\Omega' \subset \Omega$ there exists $k \in \mathbb{N}$ such that $\Omega' \subset \Omega_k$,
\end{enumerate}
(see Kuroda-\^Otani-Shimizu \cite{KOS1}).

Then our main results are stated as follows.

%%%%%%%%%%%%%%%%%%%%%%%%%%%%%%%%%%%%%%%%%%%%%%%%%%
%% Thm 1  %%%%%%%%%%%%%%%%%%%%%%%%%%%%%%%%%%%%%%%%
%%%%%%%%%%%%%%%%%%%%%%%%%%%%%%%%%%%%%%%%%%%%%%%%%%
\begin{Thm}
\label{main_result_1}
Let \(\Omega \subset \mathbb{R}^N\) be a general domain of uniformly \({\rm C}^2\)-regular class (see, e.g., \cite{A1}) and satisfy (i) and (ii).
Suppose that \(F \in {\rm L}^2(0, T;  \mathbb{L}^2(\Omega))\) with \(T > 0\), \(\max\left\{1, \frac{2N}{2+N}\right\} < p\) and \(\left(\frac{\alpha}{\lambda}, \frac{\beta}{\kappa}\right) \in {\rm CGL}(c_q^{-1})\).
Then for any \(U_0 \in \mathbb{V}_p(\Omega) \cap \mathbb{L}^q(\Omega)\), there exists a solution \(U \in {\rm C}([0, T];  \mathbb{L}^2(\Omega))\) of {\rm (ACGL)\(_p\)} satisfying
\begin{enumerate}
\item \(U \in {\rm W}^{1,2}(0,T;  \mathbb{L}^2(\Omega))\cap{\rm C}([0,T];\mathbb{V}_p(\Omega)\cap\mathbb{L}^q(\Omega))\),
\item \(U(t) \in {\rm D}(\partial \varphi) \cap {\rm D}(\partial \psi)\) for a.e. \(t \in (0, T)\) and satisfies {\rm (ACGL)\(_p\)} for a.e. \(t \in (0, T)\),
\item \(\partial \varphi(U(\cdot)), \partial \psi(U(\cdot)) \in {\rm L}^2(0, T; \mathbb{L}^2(\Omega))\).
\end{enumerate}
\end{Thm}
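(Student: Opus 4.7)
The strategy follows the two-step scheme outlined in the introduction: first solve (ACGL)$_p$ on each bounded exhausting subdomain $\Omega_k$ with an approximant of $U_0$ that is compactly supported in $\Omega_k$, then pass $k\to\infty$ by a diagonal argument combined with local compactness. In both steps the equation is read as a subdifferential evolution
$$\frac{dU}{dt}+\partial(\lambda\varphi+\kappa\psi)(U)=-\alpha I\partial\varphi(U)-\beta I\partial\psi(U)+\gamma U+F(t),$$
where the right-hand side is treated as a non-monotone perturbation, using the representation \eqref{adfjdhss}.

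The bounded-domain step will be carried out in Section 5 by introducing auxiliary problems in which the non-monotone $I$-terms are regularized by Yosida approximations so that the resulting equation falls within the theory of parabolic equations with non-monotone perturbations. The a priori estimates required to close the Yosida limit $\mu\to 0$ and to produce $k$-independent bounds are obtained by testing successively with $U$, with $dU/dt$, and with $\partial\varphi(U)+\partial\psi(U)$. The decisive point is that the cross terms $(\alpha I\partial\varphi(U)+\beta I\partial\psi(U),\partial\varphi(U)+\partial\psi(U))_{\mathbb{L}^2}$ generated in the last test are absorbed by the monotone dissipation precisely under the assumption $(\alpha/\lambda,\beta/\kappa)\in{\rm CGL}(c_q^{-1})$; this absorption rests on the orthogonality identities of Lemma \ref{Lem:2.1} together with the key inequalities to be prepared in Section 4. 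The outcome is a solution $U_k$ on $\Omega_k$ enjoying the regularity (i)--(iii), with bounds depending only on $|U_0|_{\mathbb{V}_p\cap\mathbb{L}^q}$ and $|F|_{{\rm L}^2(0,T;\mathbb{L}^2)}$, not on $k$.

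After zero-extension of $U_k$ outside $\Omega_k$, I extract a diagonal subsequence that converges weakly-$*$ in ${\rm L}^\infty(0,T;\mathbb{V}_p\cap\mathbb{L}^q)$, weakly in ${\rm W}^{1,2}(0,T;\mathbb{L}^2)$, and for which both $\partial\varphi(U_k)$ and $\partial\psi(U_k)$ converge weakly in ${\rm L}^2(0,T;\mathbb{L}^2)$ to some $\xi,\eta$. Since $p>\max\{1,2N/(N+2)\}$, the embedding $\mathbb{W}^{1,p}(\Omega')\hookrightarrow\mathbb{L}^2(\Omega')$ is compact on every bounded $\Omega'\subset\Omega$, so the Aubin--Lions lemma yields strong convergence $U_k\to U$ in ${\rm L}^2(0,T;\mathbb{L}^2(\Omega'))$, hence a.e.\ after further extraction. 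The identification $\xi=\partial\varphi(U)$, $\eta=\partial\psi(U)$ is then obtained from the defining subdifferential inequality: for any test $V\in\mathbb{V}_p\cap\mathbb{L}^q$ compactly supported in some $\Omega_{k_0}$,
$$(\partial\varphi(U_k(t)),V-U_k(t))_{\mathbb{L}^2}\leq\varphi(V)-\varphi(U_k(t))\qquad(k\geq k_0),$$
and after time-integration the lower semicontinuity of $\varphi$ together with the established convergences yields the analogous inequality for $(\xi,V-U)$, which by density of such $V$'s in ${\rm D}(\varphi)$ forces $\xi=\partial\varphi(U)$; the argument for $\psi$ is identical. The linear $I$-terms pass to the limit immediately since $I$ is an isometry on $\mathbb{L}^2$.

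The main technical obstacle is to secure the $k$- and $\mu$-independent estimates for $\partial\varphi(U_k)$ and $\partial\psi(U_k)$ in ${\rm L}^2(0,T;\mathbb{L}^2)$; this is what forces the CGL-region restriction and requires a delicate interplay between the orthogonalities of Lemma \ref{Lem:2.1} and the inequalities of Section 4, because the cross-term $(I\partial\varphi_\mu(U),\partial\varphi(U))_{\mathbb{L}^2}$ does not vanish (as noted in the remark following Lemma \ref{Lem:2.1}) and must be controlled by a different route than the pure $p=2$ argument of \cite{KOS1}. A secondary subtlety, specific to the range $p\leq 2$, is to verify that the limit $U$ belongs to $\mathbb{V}_p$ rather than merely to $\mathbb{X}_p$; this is why the initial data on $\Omega_k$ is prescribed with compact support, ensuring $U_k\in\mathbb{V}_p$ uniformly. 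Continuity $U\in{\rm C}([0,T];\mathbb{V}_p\cap\mathbb{L}^q)$ and the a.e.\ validity of (ACGL)$_p$ follow from the uniform ${\rm W}^{1,2}(0,T;\mathbb{L}^2)$-bound, convexity of $\varphi$ and $\psi$, and the standard argument of Brezis for subdifferential evolutions.
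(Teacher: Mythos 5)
Your overall architecture (bounded exhausting domains via Proposition \ref{main_result_bdd}, $k$-independent estimates, zero extension, diagonal argument with local compactness, identification of the weak limits of $\partial\varphi(U^k)$ and $\partial\psi(U^k)$) matches the paper's proof. But there is a genuine gap at the decisive step, the identification $\xi=\partial\varphi(U)$. You pass to the limit in
\[
(\partial\varphi(U^k(t)),V-U^k(t))_{\mathbb{L}^2(\Omega)}\leq\varphi(V)-\varphi(U^k(t))
\]
using only lower semicontinuity and ``the established convergences.'' The troublesome term is $\int_0^T(\partial\varphi(U^k),U^k)_{\mathbb{L}^2(\Omega)}\,dt$: it is a pairing of a weakly convergent sequence with a sequence that converges strongly only in ${\rm L}^2(0,T;\mathbb{L}^2(\Omega'))$ for \emph{bounded} $\Omega'$. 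Since $\Omega$ is unbounded, mass can escape to infinity and this product does not pass to the limit; the Minty-type inequality $\limsup_k\int(\partial\varphi(U^k),U^k)\leq\int(\xi,U)$ is exactly what is missing. (For $\partial\psi$ the situation is harmless because $U\mapsto|U|_{\mathbb{R}^2}^{q-2}U$ is a pointwise operator and a.e.\ convergence plus demiclosedness identifies the limit locally, as in the paper; your claim that ``the argument for $\psi$ is identical'' to the $\varphi$ argument is backwards --- it is the $\psi$ argument that is easy and the $\varphi$ argument that is not.)

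The paper closes this gap by substituting the equation for $\partial\varphi(U^{k'_n})$ in the cross term, which converts it into $\frac{d}{dt}|U^{k'_n}|_{\mathbb{L}^2}^2$, $\psi(U^{k'_n})$ and $(F,U^{k'_n})$ terms that are amenable to weak lower semicontinuity, at the price of needing the orthogonality $(Ih,U)_{\mathbb{L}^2}=0$ for the \emph{limit} $h$ --- which is not automatic, since $h$ is not yet known to be $\partial\varphi(U)$. Establishing $(Ih,U)_{\mathbb{L}^2}=0$ is the technical heart of Section 6: it requires the cutoff $\eta_R$, the identity \eqref{trunc}, and the case-by-case decay estimates \eqref{innerineq}, \eqref{innerineq2}, \eqref{trunc2} (Gagliardo--Nirenberg interpolation for $2<p<N$, a scaling plus Sobolev argument for $p\geq N$, and a direct H\"older estimate for $p\leq 2$, each exploiting $p>2N/(N+2)$ to get a negative power of $R$). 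None of this appears in your proposal, and without it, or some substitute for the $\limsup$ inequality on the unbounded domain, the identification of $\partial\varphi(U)$ does not go through. A secondary inaccuracy: the a priori estimates are not obtained by testing with $dU/dt$; the paper tests with $U$, then with $\partial\varphi(U)$ and $\partial\psi_\mu(U)$ combined with the weight $\delta^2$, and the admissible $\delta$ is what encodes the CGL-region condition.
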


%%%%%%%%%%%%%%%%%%%%%%%%%%%%%%%%%%%%%%%%%%%%%%%%%%
%% Thm 2  %%%%%%%%%%%%%%%%%%%%%%%%%%%%%%%%%%%%%%%%
%%%%%%%%%%%%%%%%%%%%%%%%%%%%%%%%%%%%%%%%%%%%%%%%%%
As for the smoothing effect, the following result holds.
\begin{Thm}
\label{main_result_2}
Let all assumptions in Theorem \ref{main_result_1} be satisfied.
Then for any \(U_0 \in \mathbb{L}^2(\Omega)\), there exists a solution \(U \in {\rm C}([0, T];  \mathbb{L}^2(\Omega))\) of {\rm (ACGL)\(_p\)} satisfying
\begin{enumerate}
\item \(U \in {\rm W}^{1, 2}_{\rm loc}((0, T]; \mathbb{L}^2(\Omega))\cap{\rm C}((0,T];\mathbb{V}_p(\Omega)\cap\mathbb{L}^q(\Omega))\),
\item \(U(t) \in {\rm D}(\partial \varphi) \cap {\rm D}(\partial \psi)\) for a.e. \(t \in (0, T)\) and satisfies {\rm (ACGL)\(_p\)} for a.e. \(t \in (0, T)\),
\item \(\varphi(U(\cdot)), \psi(U(\cdot)) \in {\rm L}^1(0, T)\) and \(t\varphi(U(t)), t\psi(U(t)) \in {\rm L}^\infty(0, T)\),
\item \(\sqrt{t} \frac{d}{dt}U(t), \sqrt{t} \partial \varphi(U(t)), \sqrt{t} \partial \psi(U(t)) \in {\rm L}^2(0, T; \mathbb{L}^2(\Omega))\).
\end{enumerate}
\end{Thm}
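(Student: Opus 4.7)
The plan is to reduce Theorem \ref{main_result_2} to Theorem \ref{main_result_1} by approximating the initial datum with smoother data and to extract the smoothing properties via the classical ``multiply by \(t\,dU/dt\)'' test, followed by passage to the limit. First I would pick a sequence \(\{U_{0,n}\} \subset \mathbb{V}_p(\Omega) \cap \mathbb{L}^q(\Omega)\) converging to \(U_0\) in \(\mathbb{L}^2(\Omega)\) (for instance via the double resolvent \(U_{0,n} = J^{\partial \varphi}_{1/n}J^{\partial \psi}_{1/n}U_0\)). Theorem \ref{main_result_1} then furnishes solutions \(U_n\) of (ACGL)\(_p\) with data \(U_{0,n}\) together with all the regularity listed there. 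Testing the equation with \(U_n\) and killing the skew-symmetric contributions by \eqref{orth:IU} yields the uniform bound
\begin{equation*}
\sup_{0 \leq t \leq T}|U_n(t)|_{\mathbb{L}^2}^2 + \int_0^T \bigl[\varphi(U_n) + \psi(U_n)\bigr]ds \leq C\bigl(|U_{0,n}|_{\mathbb{L}^2}^2 + |F|_{{\rm L}^2(0,T;\mathbb{L}^2)}^2\bigr),
\end{equation*}
so that \(\varphi(U_n), \psi(U_n) \in {\rm L}^1(0,T)\) uniformly in \(n\); the same computation applied to \(U_n - U_m\) produces the \(\mathbb{L}^2\)-contractivity \(|U_n(t) - U_m(t)|_{\mathbb{L}^2} \leq e^{|\gamma|t}|U_{0,n} - U_{0,m}|_{\mathbb{L}^2}\).

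The smoothing step is then to test the equation with \(t\,dU_n/dt\). Rewriting \(t\frac{d}{dt}(\lambda\varphi + \kappa\psi)(U_n) = \frac{d}{dt}\{t(\lambda\varphi + \kappa\psi)(U_n)\} - (\lambda\varphi + \kappa\psi)(U_n)\), applying Young's inequality to the forcing and to \(\gamma U_n\), and splitting the cross terms \(\alpha t(I\partial\varphi(U_n), dU_n/dt)_{\mathbb{L}^2}\) and \(\beta t(I\partial\psi(U_n), dU_n/dt)_{\mathbb{L}^2}\) by Young against \(\tfrac{1}{4} t|dU_n/dt|_{\mathbb{L}^2}^2\), I obtain residuals proportional to \(t(|\partial\varphi(U_n)|_{\mathbb{L}^2}^2 + |\partial\psi(U_n)|_{\mathbb{L}^2}^2)\). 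These are controlled, under the CGL-region assumption, by reading the equation as \(\partial(\lambda\varphi + \kappa\psi)(U_n) = F - dU_n/dt - \alpha I\partial\varphi(U_n) - \beta I\partial\psi(U_n) + \gamma U_n\) and invoking the key inequalities of \S4 to bound \(|\partial\varphi(U)|_{\mathbb{L}^2}^2 + |\partial\psi(U)|_{\mathbb{L}^2}^2\) by \(C(|dU/dt|_{\mathbb{L}^2}^2 + |F|_{\mathbb{L}^2}^2 + |U|_{\mathbb{L}^2}^2)\). After integration over \((0,T)\) and absorption, the previous \({\rm L}^1\)-control combines with the basic energy bound to yield, uniformly in \(n\),
\begin{equation*}
\int_0^T t|dU_n/dt|_{\mathbb{L}^2}^2\,dt + \sup_{0 \leq t \leq T} t\{\varphi(U_n(t)) + \psi(U_n(t))\} + \int_0^T t\bigl(|\partial\varphi(U_n)|_{\mathbb{L}^2}^2 + |\partial\psi(U_n)|_{\mathbb{L}^2}^2\bigr)dt \leq C.
\end{equation*}

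For the passage to the limit, the contractivity above gives \(U_n \to U\) in \({\rm C}([0,T];\mathbb{L}^2(\Omega))\), so \(U(0) = U_0\) and the required continuity is built in. On every interval \([\delta,T]\) with \(\delta > 0\), the weighted estimates become ordinary uniform bounds on \(U_n\) in \({\rm L}^2(\delta,T;{\rm D}(\partial\varphi) \cap {\rm D}(\partial\psi)) \cap {\rm W}^{1,2}(\delta,T;\mathbb{L}^2)\) and in \({\rm L}^\infty(\delta,T;\mathbb{V}_p \cap \mathbb{L}^q)\). Local compactness of \(\mathbb{V}_p(\Omega') \hookrightarrow \mathbb{L}^2(\Omega')\) on bounded \(\Omega' \subset \Omega\), combined with a diagonal argument along the exhausting sequence \(\{\Omega_k\}\), upgrades the convergence to strong convergence in \(\mathbb{L}^2_{\rm loc}(\Omega)\) uniformly on \([\delta,T]\). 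Demiclosedness of the maximal monotone operators \(\partial\varphi\) and \(\partial\psi\) then identifies the weak \({\rm L}^2\)-limits of \(\sqrt{t}\,\partial\varphi(U_n)\) and \(\sqrt{t}\,\partial\psi(U_n)\) with \(\sqrt{t}\,\partial\varphi(U)\) and \(\sqrt{t}\,\partial\psi(U)\), so that \(U\) solves (ACGL)\(_p\) and satisfies the four claimed regularity properties.

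The principal obstacle is the simultaneous control of the non-monotone skew terms \(\alpha I\partial\varphi(U)\) and \(\beta I\partial\psi(U)\) in the \(t\,dU/dt\) test: unlike in the \(\times U\) test they cannot be eliminated by orthogonality, and their absorption into the monotone drift relies entirely on the \S4 inequalities, whose sharpness is dictated by the CGL-region condition \((\alpha/\lambda,\beta/\kappa) \in {\rm CGL}(c_q^{-1})\). A secondary difficulty, already present in Theorem \ref{main_result_1}, is that general \(\Omega\) lacks a global compact Sobolev embedding, so the diagonal procedure on \(\{\Omega_k\}\) is needed to promote weak to strong local convergence and identify the two nonlinear limits.
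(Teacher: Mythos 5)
There are two genuine gaps in your argument, both located exactly where the non-monotone skew terms have to be handled.

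First, the claimed contractivity \(|U_n(t)-U_m(t)|_{\mathbb{L}^2}\leq e^{|\gamma|t}|U_{0,n}-U_{0,m}|_{\mathbb{L}^2}\) is not available here. Testing the difference of the two equations with \(W=U_n-U_m\) leaves the terms \(\alpha\,(I(\partial\varphi(U_n)-\partial\varphi(U_m)),W)_{\mathbb{L}^2}\) and \(\beta\,(I(\partial\psi(U_n)-\partial\psi(U_m)),W)_{\mathbb{L}^2}\); the orthogonality \eqref{orth:IU} kills \((I\partial\varphi(U),U)_{\mathbb{L}^2}\) for a \emph{single} solution but says nothing about differences, and making \((\lambda+\alpha I)\partial\varphi\) accretive is precisely the extra restriction \(|\alpha|/\lambda<2\sqrt{p-1}/(p-2)\) of Okazawa--Yokota that the paper is designed to avoid (no uniqueness is claimed anywhere in the paper, consistently with this). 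Without the contraction you have no route to \(U_n\to U\) in \({\rm C}([0,T];\mathbb{L}^2(\Omega))\), hence none to \(U\in{\rm C}([0,T];\mathbb{L}^2)\) or to \(U(0)=U_0\). The paper instead recovers the initial condition indirectly: it first shows \(U(t)\rightharpoonup U_0\) weakly by integrating the equation against \(W\in\mathbb{C}_0^\infty(\Omega)\), then shows \(\limsup_{t\downarrow 0}|U(t)|_{\mathbb{L}^2}^2\leq|U_0|_{\mathbb{L}^2}^2\) via a Gronwall inequality on the exhausting domains \(\Omega_l\) and Beppo Levi's theorem.

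Second, the smoothing estimate. Your absorption scheme for the cross terms \(\alpha t(I\partial\varphi(U_n),dU_n/dt)_{\mathbb{L}^2}\) is circular: Young's inequality leaves \(\alpha^2 t|\partial\varphi(U_n)|_{\mathbb{L}^2}^2\), which you then propose to bound by \(Ct(|dU_n/dt|_{\mathbb{L}^2}^2+\cdots)\) "from the equation"; but that constant is not small, so it cannot be reabsorbed into \(\tfrac14 t|dU_n/dt|_{\mathbb{L}^2}^2\) for general \(\alpha,\beta\). Worse, the coercivity \(|\partial\varphi(U)|_{\mathbb{L}^2}^2+|\partial\psi(U)|_{\mathbb{L}^2}^2\leq C|(\lambda+\alpha I)\partial\varphi(U)+(\kappa+\beta I)\partial\psi(U)|_{\mathbb{L}^2}^2\) does \emph{not} follow from the \S4 inequalities throughout \({\rm CGL}(c_q^{-1})\): expanding the square and using \(|(\partial\varphi,I\partial\psi)_{\mathbb{L}^2}|\leq c_q(\partial\varphi,\partial\psi)_{\mathbb{L}^2}\) one needs, after normalizing, \(2\bigl[c_q|y-x|-(1+xy)\bigr]<\sqrt{(1+x^2)(1+y^2)}\) with \(x=\alpha/\lambda\), \(y=\beta/\kappa\), which fails e.g.\ for \(|x|\leq c_q^{-1}\) and \(|y|\) large even though such points lie in \({\rm S}_1(c_q^{-1})\). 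The paper's route is different and is where the CGL condition actually enters: it forms the combination \(\delta^2\times(\text{test with }\partial\varphi)+(\text{test with }\partial\psi_\mu)\), uses the arithmetic--geometric mean inequality \eqref{fdfkskgkd} and \eqref{key_inequality_2} to produce a coefficient \(J(\delta,\epsilon)\geq 0\) in front of \(|B_\mu|\) for a suitable \(\delta\), obtaining the differential inequality \eqref{sfdsldflsksfdhsl}; the \(t\)-weighted bounds of Lemma \ref{2nd_energy_ACGL_k1} then come simply from multiplying \eqref{adfhgtyyjhds} and \eqref{sfdsldflsksfdhsl} by \(t\) and using \(t\frac{d}{dt}E=\frac{d}{dt}(tE)-E\) together with the \({\rm L}^1\)-bound on \(\varphi(U)+\psi(U)\) from the first energy estimate. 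No test against \(t\,dU/dt\) is needed. Your limit passage (weak convergence on \((\delta,T)\), diagonalization over \(\Omega_l\), demiclosedness) is in the right spirit, but note that identifying \(h=\partial\varphi(U)\) still requires the full truncation and subdifferential-inequality machinery of \S6, not merely demiclosedness of \(\partial\varphi\).
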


%%%%%%%%%%%%%%%%%%%%%%%%%%%%%%%%%%%%%%%%%%%%%%%%%%
%% Prop 2  %%%%%%%%%%%%%%%%%%%%%%%%%%%%%%%%%%%%%%%
%%%%%%%%%%%%%%%%%%%%%%%%%%%%%%%%%%%%%%%%%%%%%%%%%%
To prove Theorems \ref{main_result_1} and \ref{main_result_2}, we need to prepare the following result concerning the bounded domain case:
\begin{Prop}
\label{main_result_bdd}
Let \(\Omega \subset \mathbb{R}^N\) be a bounded domain of \({\rm C}^2\)-regular class.
Suppose that \(F \in {\rm L}^2(0, T;  \mathbb{L}^2(\Omega))\) with \(T > 0\), \(\max\left\{1, \frac{2N}{2+N}\right\} < p\) and \(\left(\frac{\alpha}{\lambda}, \frac{\beta}{\kappa}\right) \in {\rm CGL}(c_q^{-1})\).
Then for any \(U_0 \in \mathbb{W}^{1,p}_0(\Omega) \cap \mathbb{L}^q(\Omega)\), there exists a solution \(U \in {\rm C}([0, T];  \mathbb{L}^2(\Omega))\) of {\rm (ACGL)\(_p\)} satisfying
\begin{enumerate}
\item \(U \in {\rm W}^{1,2}(0,T;  \mathbb{L}^2(\Omega))\cap{\rm C}([0,T];\mathbb{W}_0^{1,p}(\Omega)\cap\mathbb{L}^q(\Omega))\),
\item \(U(t) \in {\rm D}(\partial \varphi) \cap {\rm D}(\partial \psi)\) for a.e. \(t \in (0, T)\) and satisfies {\rm (ACGL)\(_p\)} for a.e. \(t \in (0, T)\),
\item \(\partial \varphi(U(\cdot)), \partial \psi(U(\cdot)) \in {\rm L}^2(0, T; \mathbb{L}^2(\Omega))\).
\end{enumerate}
\end{Prop}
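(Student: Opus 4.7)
The plan is to build a solution via a Yosida-type regularization of the non-monotone perturbations $\alpha I\partial\varphi$ and $\beta I\partial\psi$, apply classical existence theory to the regularized equation, derive $\mu$-uniform energy estimates exploiting the CGL-region hypothesis, then pass to the limit with the help of the compactness available in bounded domains.

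First, for each $\mu>0$ I would consider the approximate problem
\begin{equation*}
\frac{dU_\mu}{dt}+\partial(\lambda\varphi+\kappa\psi)(U_\mu)+\alpha I\partial\varphi_\mu(U_\mu)+\beta I\partial\psi_\mu(U_\mu)-\gamma U_\mu=F(t),\qquad U_\mu(0)=U_0.
\end{equation*}
By \eqref{adfjdhss}, $\partial(\lambda\varphi+\kappa\psi)$ is maximal monotone, while $\partial\varphi_\mu,\partial\psi_\mu$ are Lipschitz continuous on $\mathbb{L}^2(\Omega)$, so the non-monotone part defines a globally Lipschitz perturbation of the subdifferential. The standard theory of parabolic equations governed by a subdifferential plus a Lipschitz perturbation (Br\'ezis) then produces a unique strong solution $U_\mu\in W^{1,2}(0,T;\mathbb{L}^2)\cap C([0,T];\mathbb{W}^{1,p}_0\cap\mathbb{L}^q)$, since $U_0\in D(\lambda\varphi+\kappa\psi)=\mathbb{W}^{1,p}_0\cap\mathbb{L}^q$ in the bounded case.

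Next, I would establish $\mu$-uniform a priori estimates. Testing with $U_\mu$ and invoking \eqref{orth:mu:IU} to annihilate the $\alpha$- and $\beta$-terms yields, via Gronwall, uniform bounds in $L^\infty(0,T;\mathbb{L}^2)\cap L^p(0,T;\mathbb{V}_p)\cap L^q(0,T;\mathbb{L}^q)$. For the crucial $L^2(0,T;\mathbb{L}^2)$-bounds on $\partial\varphi(U_\mu)$ and $\partial\psi(U_\mu)$, I would test with $\lambda\partial\varphi(U_\mu)+\kappa\partial\psi(U_\mu)=\partial(\lambda\varphi+\kappa\psi)(U_\mu)$. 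The chain rule for convex functionals delivers $\frac{d}{dt}(\lambda\varphi+\kappa\psi)(U_\mu)$ together with $|\lambda\partial\varphi(U_\mu)+\kappa\partial\psi(U_\mu)|^2_{\mathbb{L}^2}$, while \eqref{orth:Ipsi} eliminates one of the four cross products. The remaining cross terms
\begin{equation*}
\alpha\lambda(I\partial\varphi_\mu(U_\mu),\partial\varphi(U_\mu))_{\mathbb{L}^2}+\alpha\kappa(I\partial\varphi_\mu(U_\mu),\partial\psi(U_\mu))_{\mathbb{L}^2}+\beta\lambda(I\partial\psi_\mu(U_\mu),\partial\varphi(U_\mu))_{\mathbb{L}^2}
\end{equation*}
must then be absorbed into $|\lambda\partial\varphi(U_\mu)+\kappa\partial\psi(U_\mu)|^2_{\mathbb{L}^2}$ modulo bounded data, which is precisely the role of the key inequalities prepared in Section~4; their applicability is exactly the assumption $(\alpha/\lambda,\beta/\kappa)\in{\rm CGL}(c_q^{-1})$.

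Finally I would pass $\mu\to 0$. Since $\Omega$ is bounded and $p>2N/(N+2)$, the embedding $\mathbb{W}^{1,p}_0(\Omega)\hookrightarrow\mathbb{L}^2(\Omega)$ is compact, so the Aubin--Lions lemma produces a subsequence with $U_\mu\to U$ strongly in $L^2(0,T;\mathbb{L}^2)$ and $dU_\mu/dt\rightharpoonup dU/dt$ weakly in $L^2(0,T;\mathbb{L}^2)$. Since $J^{\partial\varphi}_\mu U_\mu\to U$ in $\mathbb{L}^2$ and $\partial\varphi_\mu(U_\mu)=\partial\varphi(J^{\partial\varphi}_\mu U_\mu)$ remains bounded, demiclosedness of the maximal monotone operator $\partial\varphi$ identifies its weak $L^2$ limit as $\partial\varphi(U)$, and similarly for $\partial\psi$; passing to the limit in the approximate equation then yields the strong solution satisfying (1)--(3). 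The main obstacle will be controlling the non-vanishing cross term $\alpha\lambda(I\partial\varphi_\mu(U_\mu),\partial\varphi(U_\mu))_{\mathbb{L}^2}$: as explicitly noted just after Lemma~\ref{Lem:2.1}, the failure of self-adjointness of $\partial\varphi$ destroys the orthogonality that would otherwise make this term vanish, and one must rely on the CGL-region geometry through the Section~4 inequalities to close the estimate.
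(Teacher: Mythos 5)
Your overall architecture (Yosida regularization, $\mu$-uniform energy estimates via the CGL-region, compactness in the bounded domain, demiclosedness to identify limits) matches the spirit of the paper, and the first energy estimate, the limit passage, and the identification of $\partial\psi(U)$ are all fine. But there is a genuine gap at the point you yourself flag as ``the main obstacle'': the cross term $\alpha\lambda(I\partial\varphi_\mu(U_\mu),\partial\varphi(U_\mu))_{\mathbb{L}^2}$ produced by regularizing $\alpha I\partial\varphi$ cannot be closed by the Section~4 inequalities or by the CGL-region hypothesis. Lemma \ref{key_inequality} only compares $(\partial\varphi(U), I\partial\psi_\mu(U))_{\mathbb{L}^2}$ with $(\partial\varphi(U),\partial\psi_\mu(U))_{\mathbb{L}^2}$, i.e.\ it controls the $\varphi$--$\psi$ interaction; it says nothing about the $\varphi$--$\varphi_\mu$ interaction, and the paper explicitly records after Lemma \ref{Lem:2.1} that $(\partial\varphi_\mu(U), I\partial\varphi(U))_{\mathbb{L}^2}=0$ fails for $p\neq2$. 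The only generic bound is $|(I\partial\varphi_\mu(U),\partial\varphi(U))_{\mathbb{L}^2}|\leq|\partial\varphi(U)|_{\mathbb{L}^2}^2$, so absorbing $\alpha(I\partial\varphi_\mu(U),\partial\varphi(U))_{\mathbb{L}^2}$ into $\lambda|\partial\varphi(U)|_{\mathbb{L}^2}^2$ forces $|\alpha|<\lambda$. The CGL-region gives no such smallness: e.g.\ $\alpha=100\lambda$, $\beta=100\kappa$ lies in ${\rm S}_3$, so your second energy estimate does not close for admissible parameters.

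The paper avoids this by never regularizing $\partial\varphi$ in the final approximation. It uses a two-stage scheme: in (AE)$_\mu$ only $\partial\psi$ is Yosida-approximated, so the term $\alpha I\partial\varphi(U)$ is exactly orthogonal to $\partial\varphi(U)$ by \eqref{skew-symmetric_property} and contributes only $-\alpha B_\mu$ with $B_\mu=(\partial\varphi(U),I\partial\psi_\mu(U))_{\mathbb{L}^2}$, which \emph{is} controlled by \eqref{key_inequality_2} and the CGL-region. The price is that (AE)$_\mu$ itself contains the genuinely non-monotone, non-Lipschitz term $\alpha I\partial\varphi(U)$, and its solvability (Proposition \ref{solvability_of_AE}) is established separately by an induction on $\alpha$ in steps of size $\lambda/2$: one writes $\alpha=\tfrac{n\lambda}{2}+\alpha_0$ with $|\alpha_0|\leq\lambda/2$, regularizes only the small residual $\alpha_0 I\partial\varphi_\nu$, and uses that the already-incorporated $\tfrac{n\lambda}{2}I\partial\varphi(U)$ is orthogonal to $\partial\varphi(U)$ while $|\alpha_0(I\partial\varphi_\nu(U),\partial\varphi(U))_{\mathbb{L}^2}|\leq\tfrac{\lambda}{2}|\partial\varphi(U)|_{\mathbb{L}^2}^2$ is absorbable. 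Some device of this kind (or another new idea) is needed to handle arbitrary $\alpha$; your single-shot regularization of both non-monotone terms does not suffice.
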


\begin{Rem}
The above result concerning the bounded domain case ameliorate the result of Okazawa-Yokota \cite{OY1}, since we are able to exclude the assumption \(|\alpha|/\lambda < (p-2)/2\sqrt{p-1}\).
\end{Rem}

%%%%%%%%%%%%%%%%%%%%%%%%%%%%%%%%%%%%%%%%%%%%%%%%%%
%%%%%%%%%%%%%%%%%%%%%%%%%%%%%%%%%%%%%%%%%%%%%%%%%%
%%  Key Ineq  %%%%%%%%%%%%%%%%%%%%%%%%%%%%%%%%%%%%
%%%%%%%%%%%%%%%%%%%%%%%%%%%%%%%%%%%%%%%%%%%%%%%%%%
%%%%%%%%%%%%%%%%%%%%%%%%%%%%%%%%%%%%%%%%%%%%%%%%%%
\section{Key Inequalities}
\label{sec-4}

In this section, we prepare some inequalities, which play an important role in establishing a priori estimates.
The same estimates are obtained in \cite{OY1} within the complex valued functions setting; and in \cite{KOS1} under the framework of the product space of real valued functions.
We follow the strategy in \cite{KOS1}.

%%%%%%%%%%%%%%%%%%%%%%%%%%%%%%%%%%%%%%%%%%%%%%%%%%
%% Lem 3  %%%%%%%%%%%%%%%%%%%%%%%%%%%%%%%%%%%%%%%%
%%%%%%%%%%%%%%%%%%%%%%%%%%%%%%%%%%%%%%%%%%%%%%%%%%
\begin{Lem}[(cf. \cite{KOS1} Lemma 4.1)]
\label{key_inequality} 
The following inequalities hold for all $U, V \in {\rm D}(\partial \varphi) \cap {\rm D}(\partial \psi)$:
\begin{align}
%\label{key_inequality_1}
%%&|(\partial \varphi(U),  I \partial \psi(U))_{\mathbb{L}^2}|
%\leq c_{q}(\partial \varphi(U),  \partial \psi(U))_{\mathbb{L}^2},\\
\label{adfskdfk}
|(|\nabla U|^{p-2}_{\mathbb{R}^{2N}}\nabla V, \nabla I \partial \psi(V))_{\mathbb{L}^2}|
\leq c_q(|\nabla U|^{p-2}_{\mathbb{R}^{2N}}\nabla V, \nabla \partial \psi(V))_{\mathbb{L}^2},\\[1mm]
\label{key_inequality_2} 
|(\partial \varphi(U), I\partial \psi_\mu(U))_{\mathbb{L}^2}|
\begin{aligned}[t]
&\leq c_{q}(\partial \varphi(U), \partial \psi_\mu(U))_{\mathbb{L}^2}\\
&\leq c_{q}(\partial \varphi(U), \partial \psi(U))_{\mathbb{L}^2} \quad  \forall \mu >0,
\end{aligned}
\end{align}
where $\partial \psi_\mu(\cdot)$ is Yosida approximation of $\partial \psi(\cdot)$ given by \eqref{Yosida:psi}.
\end{Lem}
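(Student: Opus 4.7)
My approach is to prove both inequalities pointwise in $\Omega$ and then integrate, exploiting a radial--angular decomposition associated with the orthogonal pair $\{V, IV\}$ in $\mathbb{R}^2$. For (\ref{adfskdfk}) I would set $a := V\cdot\nabla V$ and $b := IV\cdot\nabla V$, both $\mathbb{R}^N$-valued. Applying the Pythagorean identity (\ref{consequence_from_orthogonality_1}) to each spatial derivative $\partial_k V$ and summing over $k = 1, \ldots, N$ yields the pointwise identity $|a|^2 + |b|^2 = |V|^2|\nabla V|^2$. Differentiating $\partial\psi(V) = |V|^{q-2}V$ and $I\partial\psi(V) = |V|^{q-2}IV$ by the product rule gives the pointwise expressions $\nabla V \cdot \nabla\partial\psi(V) = (q-2)|V|^{q-4}|a|^2 + |V|^{q-2}|\nabla V|^2$ and $\nabla V \cdot \nabla I\partial\psi(V) = (q-2)|V|^{q-4}(a\cdot b)$ (the antisymmetric piece $\nabla v_1 \cdot \nabla v_2 - \nabla v_2 \cdot \nabla v_1$ kills the second term in the latter). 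Combining Cauchy--Schwarz $|a\cdot b| \leq |a||b|$ with AM--GM $(q-1)|a|^2 + |b|^2 \geq 2\sqrt{q-1}|a||b|$ and the definition $c_q = (q-2)/(2\sqrt{q-1})$ produces the pointwise bound $(q-2)|V|^{q-4}|a\cdot b| \leq c_q[(q-2)|V|^{q-4}|a|^2 + |V|^{q-2}|\nabla V|^2]$. Weighting by $|\nabla U|^{p-2} \geq 0$, integrating over $\Omega$, and absorbing the absolute value on the left by the triangle inequality gives (\ref{adfskdfk}).

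For (\ref{key_inequality_2}), I would write $V_\mu := J_\mu^{\partial\psi}U$, so that $\partial\psi_\mu(U) = \partial\psi(V_\mu) = |V_\mu|^{q-2}V_\mu$, and the resolvent equation $(1+\mu\partial\psi)V_\mu = U$ gives $U = \rho V_\mu$ with $\rho := 1+\mu|V_\mu|^{q-2}$ a scalar depending only on $|V_\mu|$. Integration by parts (justified via $U, V_\mu \in \mathbb{V}_p$ plus a standard density/truncation argument, using $\partial\varphi(U) \in \mathbb{L}^2$ and $V_\mu \in \mathbb{L}^{2(q-1)}$) rewrites the inner products as $(\partial\varphi(U), I\partial\psi_\mu(U))_{\mathbb{L}^2} = \int_\Omega |\nabla U|^{p-2}(\nabla U \cdot \nabla I\partial\psi(V_\mu))\,dx$ and $(\partial\varphi(U), \partial\psi_\mu(U))_{\mathbb{L}^2} = \int_\Omega |\nabla U|^{p-2}(\nabla U \cdot \nabla\partial\psi(V_\mu))\,dx$. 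A direct computation with $\nabla U = \rho\nabla V_\mu + \mu(q-2)|V_\mu|^{q-4}(V_\mu\cdot\nabla V_\mu)V_\mu$ reveals the crucial cancellation $\nabla U \cdot \nabla I\partial\psi(V_\mu) = \nabla V_\mu \cdot \nabla I\partial\psi(V_\mu)$ (the radial correction lies in the $V_\mu$-direction, which is $\mathbb{R}^2$-orthogonal to $IV_\mu$) together with the pointwise inequality $\nabla U \cdot \nabla\partial\psi(V_\mu) - \nabla V_\mu \cdot \nabla\partial\psi(V_\mu) = \mu q(q-2)|V_\mu|^{2q-6}|a|^2 + \mu|V_\mu|^{2q-4}|\nabla V_\mu|^2 \geq 0$. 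Applying the pointwise form of (\ref{adfskdfk}) to $V_\mu$, weighting by $|\nabla U|^{p-2}$ and integrating, then establishes the first inequality in (\ref{key_inequality_2}).

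For the remaining inequality $(\partial\varphi(U), \partial\psi_\mu(U))_{\mathbb{L}^2} \leq (\partial\varphi(U), \partial\psi(U))_{\mathbb{L}^2}$, I exploit the fact that $\partial\psi$ is radial: $V_\mu = \rho_*(|U|)U$ with $\rho_* := 1/\rho \in (0,1]$ nonincreasing in $|U|$, hence $\partial\psi_\mu(U) = \rho_*^{q-1}\partial\psi(U)$. Setting $g(|U|) := 1-\rho_*^{q-1}$, one has $g \geq 0$ and $g' \geq 0$. Using $\nabla|U| = (U\cdot\nabla U)/|U|$ and the product rule produces the pointwise identity $\nabla U \cdot \nabla[g(|U|)|U|^{q-2}U] = [g'(|U|)|U| + (q-2)g(|U|)]|U|^{q-4}|U\cdot\nabla U|^2 + g(|U|)|U|^{q-2}|\nabla U|^2$, which is nonnegative for $q \geq 2$. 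Weighting by $|\nabla U|^{p-2}$, integrating, and applying integration by parts yields $(\partial\varphi(U), \partial\psi(U) - \partial\psi_\mu(U))_{\mathbb{L}^2} \geq 0$, completing the proof. The main obstacles I anticipate are (i) rigorously verifying the cancellation $\nabla U \cdot \nabla I\partial\psi(V_\mu) = \nabla V_\mu \cdot \nabla I\partial\psi(V_\mu)$, which demands careful tracking of the skew-symmetric action of $I$ through the product rule, and (ii) justifying the various integrations by parts when the integrands carry the singular weight $|V_\mu|^{q-4}$ on $\{V_\mu = 0\}$; both should be dispatched by standard truncation at $\{|V_\mu| \geq \varepsilon\}$ together with the $\mathbb{V}_p$-regularity of $V_\mu$ established in Lemma \ref{fjdkjslslsllll}.
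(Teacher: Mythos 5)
Your proof is correct. For \eqref{adfskdfk} and for the first inequality in \eqref{key_inequality_2} you follow essentially the paper's own route: the same decomposition into $a=(V\cdot\nabla V)$ and $b=(IV\cdot\nabla V)$, the Pythagorean identity $|a|^2+|b|^2=|V|^2|\nabla V|^2$, Young's inequality with the weight $\frac{1}{2\sqrt{q-1}}$, and then for \eqref{key_inequality_2} the integration by parts, the cancellation of the $\mu\nabla\partial\psi(V_\mu)\cdot I\nabla\partial\psi(V_\mu)$ term by skew-symmetry, and the nonnegativity of $\mu|\nabla\partial\psi(V_\mu)|^2$. Where you genuinely diverge is the \emph{second} inequality of \eqref{key_inequality_2}, i.e.\ $(\partial\varphi(U),\partial\psi_\mu(U))_{\mathbb{L}^2}\le(\partial\varphi(U),\partial\psi(U))_{\mathbb{L}^2}$. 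The paper proves this by the componentwise comparison estimates \eqref{VU}--\eqref{VnablaV} ($|V|\le|U|$, $|\nabla V|\le|\nabla U|$, $|(V\cdot\nabla V)|\le|(U\cdot\nabla U)|$) applied term by term to the explicit integral expression of $(\partial\varphi(U),\partial\psi_\mu(U))_{\mathbb{L}^2}$. You instead exploit that the resolvent acts radially, so $\partial\psi_\mu(U)=\rho_*^{q-1}\partial\psi(U)$ with $\rho_*=\sigma(|U|)/|U|$ nonincreasing (here $\sigma$ is the inverse of $s\mapsto s+\mu s^{q-1}$), and show that the difference $g(|U|)|U|^{q-2}U$ with $g=1-\rho_*^{q-1}\ge0$, $g'\ge0$ pairs nonnegatively with $\partial\varphi(U)$ after one more integration by parts. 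Both arguments are valid; yours is structurally cleaner in that it isolates the single scalar function responsible for the gap between $\partial\psi_\mu$ and $\partial\psi$, at the cost of an extra integration by parts (with the attendant truncation near $\{U=0\}$ and justification that the boundary terms vanish) and the verification that $\rho_*$ is monotone, whereas the paper's version needs only pointwise algebraic comparisons under an integral that is already in divergence-free form.
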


Here we note that taking \(V = U\) in \eqref{adfskdfk}, we get (cf. (4.1) in \cite{KOS1})
\begin{equation}
\label{key_inequality_1}
|(\partial \varphi(U),  I \partial \psi(U))_{\mathbb{L}^2}|
\leq c_{q}(\partial \varphi(U),  \partial \psi(U))_{\mathbb{L}^2}.
\end{equation}

%%%%%%%%%%%%%%%%%%%%%%%%%%%%%%%%%%%%%%%%%%%%%%%%%%
%% Prf of Lem 3  %%%%%%%%%%%%%%%%%%%%%%%%%%%%%%%%%
%%%%%%%%%%%%%%%%%%%%%%%%%%%%%%%%%%%%%%%%%%%%%%%%%%
\begin{proof}
By calculating \(\nabla\partial\psi(V)\), we have
\begin{align}\label{akdfjgfdgfk}
\notag&(|\nabla U|^{p-2}_{\mathbb{R}^{2N}}\nabla V, \nabla \partial \psi(V))_{\mathbb{L}^2}\\
&= \int_\Omega |\nabla U|^{p-2}_{\mathbb{R}^{2N}}\left\{(q-2)|V|_{\mathbb{R}^2}^{q-4} |(V \cdot  \nabla V)|^2_{\mathbb{R}^N} + |V|_{\mathbb{R}^2}^{q-2}|\nabla V|_{\mathbb{R}^{2N}}^2\right\} dx.
\end{align}
Making use of \eqref{skew-symmetric_property} and \eqref{commutative_property}, we obtain
\begin{equation}
\begin{aligned}[b]
&(|\nabla U|^{p-2}_{\mathbb{R}^{2N}}\nabla V, \nabla I \partial \psi(V))_{\mathbb{L}^2}\\
&=
\begin{aligned}[t]
&(q-2)\int_\Omega |\nabla U|^{p-2}_{\mathbb{R}^{2N}}|V|_{\mathbb{R}^2}^{q-4} \left( (V \cdot \nabla V), (IV \cdot \nabla V) \right)_{\mathbb{R}^N} dx\\ 
&+ \int_\Omega |\nabla U|^{p-2}_{\mathbb{R}^{2N}}|V|_{\mathbb{R}^2}^{q-2} ( \nabla V \cdot\nabla I V)dx\\
\end{aligned}\\
\label{faldkfk}
&=(q-2)\int_\Omega |\nabla U|^{p-2}_{\mathbb{R}^{2N}}|V|_{\mathbb{R}^2}^{q-4}\left( (V \cdot \nabla V), (IV \cdot \nabla V) \right)_{\mathbb{R}^N} dx.
\end{aligned}
\end{equation}

Here by direct calculations, we note 
\begin{equation}
\label{Id:nablaU}
|(V \cdot \nabla V)|_{\mathbb{R}^N}^2 + |(IV \cdot \nabla V)|_{\mathbb{R}^N}^2
= |V|_{\mathbb{R}^2}^2|\nabla V|_{\mathbb{R}^{2N}}^2.
\end{equation}

Then by Young's inequality, \eqref{faldkfk}, \eqref{Id:nablaU} and \eqref{akdfjgfdgfk}, we obtain  
\[
\begin{aligned}
&|(|\nabla U|^{p-2}_{\mathbb{R}^{2N}}\nabla V, \nabla I \partial \psi(V))_{\mathbb{L}^2}|\\
&\leq (q-2)\int_\Omega |\nabla U|^{p-2}_{\mathbb{R}^{2N}}|V|_{\mathbb{R}^2}^{q-4} |(V \cdot \nabla V)|_{\mathbb{R}^N} \cdot | (IV \cdot \nabla V)|_{\mathbb{R}^N} dx\\
&\leq (q-2) \int_\Omega |\nabla U|^{p-2}_{\mathbb{R}^{2N}}|V|_{\mathbb{R}^2}^{q-4} \frac{1}{2\sqrt{q-1}} \left\{(q-1)|(V \cdot \nabla V)|_{\mathbb{R}^N}^2\!+\!|(IV \cdot \nabla V)|_{\mathbb{R}^N}^2\right\}\! dx\\
&=c_q \int_\Omega |\nabla U|^{p-2}_{\mathbb{R}^{2N}}|V|_{\mathbb{R}^2}^{q-4} \left\{ (q-2)|(V \cdot \nabla V)|_{\mathbb{R}^N}^2 + |V|_{\mathbb{R}^2}^2 |\nabla V|_{\mathbb{R}^{2N}}^2\right\} dx\\
&=c_q(|\nabla U|^{p-2}_{\mathbb{R}^{2N}}\nabla V, \nabla \partial \psi(V))_{\mathbb{L}^2}, 
\end{aligned}
\]
whence follows \eqref{adfskdfk}.

Let $V:=(1+\mu \partial \psi)^{-1}U$, then applying integration by parts, \eqref{skew-symmetric_property} and \eqref{commutative_property}, we have
\begin{align}
\notag
&(\partial \varphi(U), I \partial \psi_\mu(U))_{\mathbb{L}^2}\\[1mm]
\notag
&= (|\nabla U|^{p-2}_{\mathbb{R}^{2N}}\nabla U, \nabla I \partial \psi(V))_{\mathbb{L}^2}\\[1mm]
\notag
&=(|\nabla U|^{p-2}_{\mathbb{R}^{2N}}\nabla V + \mu|\nabla U|^{p-2}_{\mathbb{R}^{2N}}\nabla \partial \psi(V), {\nabla I \partial \psi(V)})_{\mathbb{L}^2}\\[1mm]
\label{aaaiskkxcgqo} 
&=(|\nabla U|^{p-2}_{\mathbb{R}^{2N}}\nabla V, \nabla I \partial \psi(V))_{\mathbb{L}^2}.
\end{align}

Hence by \eqref{aaaiskkxcgqo} and \eqref{adfskdfk}, we obtain
\[
\begin{aligned}
&|(\partial \varphi(U), I \partial \psi_\mu(U))_{\mathbb{L}^2}|\\
&\leq c_q (|\nabla U|^{p-2}_{\mathbb{R}^{2N}}\nabla V, \nabla \partial \psi(V))_{\mathbb{L}^2}\\
&\leq c_q (|\nabla U|^{p-2}_{\mathbb{R}^{2N}}(\nabla V + \mu \nabla \partial \psi(V)), \nabla \partial \psi(V))_{\mathbb{L}^2}
=c_q (\partial \varphi(U), \partial \psi_\mu(U))_{\mathbb{L}^2},
\end{aligned}
\]
which is the first inequality of \eqref{key_inequality_2}.
Finally we show the second inequality of \eqref{key_inequality_2}.
We first note, for a.e. $x \in \Omega$, (see \cite{KOS1})
\begin{align}
\label{VU}
&|V(x)|_{\mathbb{R}^2}
\leq |U(x)|_{\mathbb{R}^2},\\
\label{nablaVU}
&|\nabla V(x)|_{\mathbb{R}^{2N}}
\leq |\nabla U(x)|_{\mathbb{R}^{2N}}, \\
\label{VnablaV}
&\begin{aligned}
|(V(x) \cdot \nabla V(x))|_{\mathbb{R}^N}
&\leq |(V(x) \cdot \nabla U(x))|_{\mathbb{R}^N}\\
&= \frac{|V(x)|_{\mathbb{R}^2}}{|U(x)|_{\mathbb{R}^2}} |(U(x) \cdot \nabla U(x))|_{\mathbb{R}^N} 
\leq |(U(x) \cdot \nabla U(x))|_{\mathbb{R}^N}.
\end{aligned}
\end{align}

We use \eqref{VU}, \eqref{nablaVU} and \eqref{VnablaV} to get
\[
\begin{aligned}
&(\partial \varphi(U), \partial \psi_\mu(U))_{\mathbb{L}^2}\\
&= \int_\Omega
\begin{aligned}[t]
&\left\{(q-2)|\nabla U|^{p-2}_{\mathbb{R}^{2N}}|V|_{\mathbb{R}^2}^{q-4} \bigl( (V\cdot\nabla V ), (V \cdot \nabla U) \bigr)_{\mathbb{R}^N}
\right.\\
&\ \left. + |\nabla U|^{p-2}_{\mathbb{R}^{2N}}|V|_{\mathbb{R}^2}^{q-2} (\nabla V \cdot \nabla U)\right\} dx
\end{aligned}\\
&\leq\int_\Omega\left\{ (q - 2)|\nabla U|^{p-2}_{\mathbb{R}^{2N}}|U|_{\mathbb{R}^2}^{q-4}| (U \cdot \nabla U )|^2_{\mathbb{R}^N} + |U|_{\mathbb{R}^2}^{q-2}| \nabla U |_{\mathbb{R}^{2N}}^p\right\} dx\\
&= (\partial \varphi(U), \partial \psi(U))_{\mathbb{L}^2}.
\end{aligned}
\]
Therefore we obtain the second inequality of \eqref{key_inequality_2}.
\end{proof}

%%%%%%%%%%%%%%%%%%%%%%%%%%%%%%%%%%%%%%%%%%%%%%%%%%
%%%%%%%%%%%%%%%%%%%%%%%%%%%%%%%%%%%%%%%%%%%%%%%%%%
%%  Bdd domain case  %%%%%%%%%%%%%%%%%%%%%%%%%%%%%
%%%%%%%%%%%%%%%%%%%%%%%%%%%%%%%%%%%%%%%%%%%%%%%%%%
%%%%%%%%%%%%%%%%%%%%%%%%%%%%%%%%%%%%%%%%%%%%%%%%%%
\section{Bounded Domain Case}
\label{sec-5}
In this section, we prove Proposition \ref{main_result_bdd}, which are concerned with the bounded domain case.
In this case, we can use the compactness argument to deduce a strong convergence of a sequence of solutions of approximate equations.

First we consider the following auxiliary equation:
\[
\tag*{(AE)}
\left\{
\begin{aligned}
&\frac{dU}{dt}(t) + \partial (\lambda \varphi + \kappa \psi)(U) + \alpha I \partial \varphi(U) + B(U) = F(t),\ t \in (0, T),\\
&U(0) =U_0,
\end{aligned}
\right.
\]
where \(\beta I \partial \psi(U) - \gamma U\) in (ACGL)\(_p\) is replaced by a Lipschitzian perturbation \(B(U)\) whose Lipschitz constant is denoted by \(L_B\).
As for the global solvability of (AE), the following statements hold:
\begin{Prop}
\label{solvability_of_AE}
Let \(\Omega \subset \mathbb{R}^N\) be a bounded domain of \({\rm C}^2\)-regular class.
Suppose that \(F \in {\rm L}^2(0, T;  \mathbb{L}^2(\Omega))\) with \(T > 0\), \(\max\left\{1, \frac{2N}{2+N}\right\} < p\) and \(\lambda, \kappa > 0\), \(\alpha \in \mathbb{R}\).
Then for any \(U_0 \in \mathbb{W}^{1,p}_0(\Omega) \cap \mathbb{L}^q(\Omega)\), there exists a solution \(U \in {\rm C}([0,T] ;  \mathbb{L}^2(\Omega))\) of {\rm (AE)} satisfying
\begin{enumerate}
\item \(U \in {\rm W}^{1,2}(0,T;  \mathbb{L}^2(\Omega))\),
\item \(U(t) \in {\rm D}(\partial \varphi) \cap {\rm D}(\partial \psi)\) for a.e. \(t \in (0, T)\) and satisfies {\rm (AE)} for a.e. \(t \in (0, T)\),
\item \(\partial \varphi(U(\cdot)), \partial \psi(U(\cdot)) \in {\rm L}^2(0, T; \mathbb{L}^2(\Omega))\).
\end{enumerate}
\end{Prop}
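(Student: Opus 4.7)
The plan is to regularize the non-monotone term $\alpha I \partial \varphi(U)$ by its Yosida approximation, solve the resulting Lipschitz-perturbed subdifferential evolution equation by classical theory, derive $\mu$-independent a priori estimates by exploiting the structure of the key inequalities from Section~\ref{sec-4}, and finally pass to the limit using the compactness available in the bounded-domain setting.

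First, for each $\mu>0$, replace $\alpha I\partial\varphi(U)$ by $\alpha I \partial\varphi_\mu(U)$ to obtain an approximate problem (AE)$_\mu$. Since $\partial\varphi_\mu$ is Lipschitz on $\mathbb{L}^2(\Omega)$ and $I$ is an isometry, the full perturbation $\alpha I\partial\varphi_\mu+B$ is globally Lipschitz. Combined with Lemma~\ref{fjdkjslslsllll} which identifies $\lambda\partial\varphi+\kappa\partial\psi=\partial(\lambda\varphi+\kappa\psi)$ as a single maximal monotone operator in $\mathbb{L}^2(\Omega)$, the classical Br\'ezis--Komura theory for subdifferential evolutions with Lipschitz perturbations produces, for every $U_0\in \mathbb{W}^{1,p}_0\cap\mathbb{L}^q$, a unique $U_\mu\in W^{1,2}(0,T;\mathbb{L}^2)$ solving (AE)$_\mu$, with $U_\mu(t)\in D(\partial\varphi)\cap D(\partial\psi)$ for a.e. $t$ and $\partial\varphi(U_\mu),\partial\psi(U_\mu)\in L^2(0,T;\mathbb{L}^2)$.

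Next, to obtain $\mu$-independent estimates, multiply (AE)$_\mu$ by $U_\mu$; the orthogonality $(I\partial\varphi_\mu(U_\mu),U_\mu)_{\mathbb{L}^2}=0$ from \eqref{orth:mu:IU}, the monotonicity of $\partial(\lambda\varphi+\kappa\psi)$, and the Lipschitz bound on $B$ yield via Gronwall an $L^\infty(0,T;\mathbb{L}^2)$ bound on $U_\mu$. For the principal estimate I would test (AE)$_\mu$ against $dU_\mu/dt$: the subdifferential term becomes $\tfrac{d}{dt}(\lambda\varphi+\kappa\psi)(U_\mu)$, the dissipative term $|dU_\mu/dt|^2$ is produced, and the cross term $\alpha(I\partial\varphi_\mu(U_\mu),dU_\mu/dt)$ has to be absorbed. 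I would combine this with the squared-norm identity obtained from (AE)$_\mu$ rewritten as $\partial(\lambda\varphi+\kappa\psi)(U_\mu)=F-dU_\mu/dt-\alpha I\partial\varphi_\mu(U_\mu)-B(U_\mu)$, using the Yosida bound $|\partial\varphi_\mu(U_\mu)|\leq|\partial\varphi(U_\mu)|$ and the positivity $(\partial\varphi(U),\partial\psi(U))_{\mathbb{L}^2}\geq 0$ (valid pointwise) which gives $|\lambda\partial\varphi+\kappa\partial\psi|^2\geq \lambda^2|\partial\varphi|^2+\kappa^2|\partial\psi|^2$. Suitably coupling the two resulting inequalities delivers $L^2(0,T;\mathbb{L}^2)$ bounds on $dU_\mu/dt$, $\partial\varphi(U_\mu)$ and $\partial\psi(U_\mu)$.

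Finally, pass to the limit $\mu\to 0$. Up to a subsequence, $U_\mu\rightharpoonup U$ weakly in $W^{1,2}(0,T;\mathbb{L}^2)$, and $\partial\varphi(U_\mu),\partial\psi(U_\mu)$ converge weakly in $L^2(0,T;\mathbb{L}^2)$. Since $\Omega$ is bounded and $p>2N/(N+2)$, the embedding $\mathbb{W}^{1,p}_0\hookrightarrow\mathbb{L}^2$ is compact, so the Aubin--Lions lemma yields $U_\mu\to U$ strongly in $C([0,T];\mathbb{L}^2)$. The identity $|U_\mu-J_\mu^{\partial\varphi}U_\mu|=\mu|\partial\varphi_\mu(U_\mu)|\to 0$ implies $J_\mu^{\partial\varphi}U_\mu\to U$ in $\mathbb{L}^2$; combined with the demiclosedness of the maximal monotone operator $\partial\varphi$, the weak limit of $\partial\varphi_\mu(U_\mu)=\partial\varphi(J_\mu^{\partial\varphi}U_\mu)$ is $\partial\varphi(U)$, hence $\alpha I\partial\varphi_\mu(U_\mu)\rightharpoonup \alpha I\partial\varphi(U)$. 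Standard arguments identify the weak limit of the subdifferential term as $\partial(\lambda\varphi+\kappa\psi)(U)$, so $U$ solves (AE).

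The main obstacle is the second step: closing the $\mu$-uniform $L^2$-bound on $\partial\varphi(U_\mu)$ in the presence of a non-monotone, non-uniformly-Lipschitz perturbation $\alpha I\partial\varphi_\mu(U_\mu)$, since a naive Young absorption would force a smallness restriction on $|\alpha|/\lambda$ inconsistent with the arbitrariness allowed by the statement. The resolution must exploit simultaneously $|\partial\varphi_\mu|\leq|\partial\varphi|$, the pointwise positivity $(\partial\varphi(U),\partial\psi(U))_{\mathbb{L}^2}\geq 0$, and a careful pairing of the two natural tests.
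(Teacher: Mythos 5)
There is a genuine gap, and it sits exactly where you locate it: the \(\mu\)-uniform bound on \(\partial\varphi(U_\mu)\) for \emph{arbitrary} \(\alpha\in\mathbb{R}\). Your setup (Yosida regularization of the non-monotone term, solvability of the regularized problem by the Lipschitz-perturbation theory, the \(\mathrm{L}^\infty\mathbb{L}^2\) bound via \eqref{orth:mu:IU}, and the limit passage using compactness and demiclosedness) matches the paper, but the ingredients you list for closing the principal estimate cannot do the job. The obstruction is the cross term \(\alpha\,(I\partial\varphi_\mu(U),\partial\varphi(U))_{\mathbb{L}^2}\) (or, in your variant, \(\alpha\,(I\partial\varphi_\mu(U),dU/dt)_{\mathbb{L}^2}\)): since \(\partial\varphi\) is nonlinear, the orthogonality \((\partial\varphi_\mu(U),I\partial\varphi(U))_{\mathbb{L}^2}=0\) fails (the paper points this out explicitly after Lemma \ref{Lem:2.1}), so the only available bound is \(|\alpha|\,|\partial\varphi(U)|_{\mathbb{L}^2}^2\), which can be absorbed by \(\lambda|\partial\varphi(U)|_{\mathbb{L}^2}^2\) only when \(|\alpha|\) is small relative to \(\lambda\). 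Neither \(|\partial\varphi_\mu|\le|\partial\varphi|\), nor \((\partial\varphi(U),\partial\psi(U))_{\mathbb{L}^2}\ge0\), nor any pairing of the two tests removes this smallness restriction; your proposal therefore proves the proposition only for \(|\alpha|\lesssim\lambda\).

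The missing idea in the paper is an induction on \(\alpha\) in increments of \(\lambda/2\). Writing \(\alpha=\tfrac{n\lambda}{2}+\alpha_0\) with \(|\alpha_0|\le\lambda/2\), one assumes the proposition for \(\alpha=\tfrac{n\lambda}{2}\) and considers the equation carrying the \emph{unregularized} term \(\tfrac{n\lambda}{2}I\partial\varphi(U)\) plus the small regularized increment \(\alpha_0 I\partial\varphi_\nu(U)\); the latter, being Lipschitz, is admissible as part of \(B\) under the induction hypothesis. In the energy estimate obtained by testing against \(\partial\varphi(U)\), the large term drops out exactly by the skew-symmetry \eqref{skew-symmetric_property}, \((I\partial\varphi(U),\partial\varphi(U))_{\mathbb{L}^2}=0\), and only the increment with \(|\alpha_0|\le\lambda/2\) must be absorbed — which is precisely the regime your argument covers (Lemmas \ref{1st_energy_AE_nu2} and \ref{2nd_energy_AE_nu2}). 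Passing to the limit \(\nu\to0\) then upgrades the result from \(\alpha=\tfrac{n\lambda}{2}\) to \(\alpha=\tfrac{(n+1)\lambda}{2}\), and iterating reaches every real \(\alpha\). Without this bootstrapping (or some substitute for it), the proof of Proposition \ref{solvability_of_AE} as you outline it does not close.
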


In order to prove Proposition \ref{solvability_of_AE},  we consider the following approximate equation with $ \partial \varphi(U)$ replaced by  its Yosida approximation $\partial \varphi_\nu(U) = \partial \varphi((1+\nu \partial \varphi)^{-1}U)$.
\[
\tag*{(AE)\(_\nu\)}
\left\{
\begin{aligned}
&\frac{dU}{dt}(t) + \partial (\lambda \varphi + \kappa \psi)(U) + \alpha I \partial \varphi_\nu(U) + B(U) = F(t),\ t \in (0, T),\\
&U(0) =U_0,
\end{aligned}
\right.
\]
Since the monotonicity of \(I\partial\varphi\) fails for \(p \neq 2\), we cannot follow the standard theory of monotone perturbations (cf. \cite{KOS1}).

First we prove Proposition \ref{solvability_of_AE} for the case where \(|\alpha| \leq \lambda/2\).
By the standard theory of maximal monotone operators (cf. Br\'ezis \cite{B1}), we have solutions \(U_\nu = U\) of {\rm (AE)\(_\nu\)} satisfying all regularities stated in Proposition \ref{solvability_of_AE}.

Here we establish some a priori estimates.
\begin{Lem}
\label{1st_energy_AE_nu}
Let \(|\alpha| \leq \lambda/2\) and 
$U$ be a solution of {\rm (AE)\(_\nu\)}.
Then there exists a positive constant $C_1$ depending only on $\lambda, \kappa, L_B, T, |B(0)|_{\mathbb{L}^2}$, $|U_0|_{\mathbb{L}^2}$ and $\int_0^T |F|_{\mathbb{L}^2}^2 dt$ satisfying
\begin{equation}\label{1st_energy_AE_nu_1}
\sup_{t \in [0,T]}|U(t)|_{\mathbb{L}^2}^2 \leq C_1.
\end{equation}
\end{Lem}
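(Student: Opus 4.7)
The plan is to derive the bound via a standard $\mathbb{L}^2$ energy estimate, taking the $\mathbb{L}^2$-inner product of (AE)$_\nu$ with $U(t)$ and integrating over $(0, t)$. The key observation is that the skew-symmetric term involving $I\partial \varphi_\nu(U)$ drops out because of the orthogonality property \eqref{orth:mu:IU}, so the hypothesis $|\alpha| \leq \lambda/2$ is actually not needed for this first energy estimate (it will come into play in later, higher-order estimates). Thus the dependence of $C_1$ on the listed quantities is natural.

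More concretely, I would proceed term by term. First, $\left(\frac{dU}{dt}, U\right)_{\mathbb{L}^2} = \tfrac{1}{2}\tfrac{d}{dt}|U|_{\mathbb{L}^2}^2$. Next, using \eqref{adfjdhss}, together with the fact that $\varphi$ and $\psi$ are nonnegative convex functionals vanishing at $0$ with $0 \in {\rm D}(\partial \varphi) \cap {\rm D}(\partial \psi)$, the definition of the subdifferential gives
\[
\bigl(\partial(\lambda \varphi + \kappa \psi)(U), U\bigr)_{\mathbb{L}^2} \geq \lambda \varphi(U) + \kappa \psi(U) \geq 0.
\]
For the skew-symmetric term, the orthogonality in \eqref{orth:mu:IU} combined with \eqref{skew-symmetric_property} yields
\[
\alpha (I \partial \varphi_\nu(U), U)_{\mathbb{L}^2} = -\alpha (\partial \varphi_\nu(U), IU)_{\mathbb{L}^2} = 0.
\]
The Lipschitz perturbation is handled via $|B(U)|_{\mathbb{L}^2} \leq L_B |U|_{\mathbb{L}^2} + |B(0)|_{\mathbb{L}^2}$ and the Cauchy–Schwarz inequality, and the forcing term through $(F, U)_{\mathbb{L}^2} \leq \tfrac{1}{2}|F|_{\mathbb{L}^2}^2 + \tfrac{1}{2}|U|_{\mathbb{L}^2}^2$.

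Combining these estimates gives a differential inequality of the form
\[
\frac{d}{dt}|U|_{\mathbb{L}^2}^2 \leq C(L_B)\,|U|_{\mathbb{L}^2}^2 + |F|_{\mathbb{L}^2}^2 + C(|B(0)|_{\mathbb{L}^2}),
\]
and Gronwall's inequality applied on $[0, T]$ produces the desired bound $\sup_{t \in [0,T]}|U(t)|_{\mathbb{L}^2}^2 \leq C_1$ with $C_1 = C_1(\lambda, \kappa, L_B, T, |B(0)|_{\mathbb{L}^2}, |U_0|_{\mathbb{L}^2}, \int_0^T |F|_{\mathbb{L}^2}^2\,dt)$. There is no real obstacle here; the proof is routine once one recognizes that the $\alpha$-term vanishes by the orthogonality of $\partial \varphi_\nu(U)$ and $IU$, which is the whole reason the Yosida regularization of $\partial \varphi$ was introduced rather than $\partial \varphi$ itself, since the Lipschitz continuity of $\partial \varphi_\nu$ in $\mathbb{L}^2$ is what makes the existence argument for (AE)$_\nu$ applicable via the standard maximal monotone operator theory.
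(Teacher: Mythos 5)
Your proposal is correct and follows essentially the same route as the paper: multiply (AE)\(_\nu\) by \(U\), kill the \(\alpha\)-term via the orthogonality \eqref{orth:mu:IU}, bound the \(B\) and \(F\) terms by Cauchy--Schwarz/Young, and apply Gronwall. The only cosmetic difference is that the paper uses the exact homogeneity identities \((\partial\varphi(U),U)_{\mathbb{L}^2}=p\varphi(U)\) and \((\partial\psi(U),U)_{\mathbb{L}^2}=q\psi(U)\) where you invoke the subdifferential inequality at \(0\); both suffice here, and your observation that \(|\alpha|\leq\lambda/2\) is not needed for this estimate is also consistent with the paper's argument.
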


\begin{proof}
We multiply (AE)\(_\nu\) by its solution \(U\) and integrate with respect to \(t\) on \([0, T]\) to obtain by \eqref{orth:mu:IU}
\begin{align}\label{5.2}
\notag&\frac{1}{2}|U(t)|_{\mathbb{L}^2}^2 + p\lambda\int_0^T\varphi(U(t))dt + q\kappa\int_0^T\psi(U(t))dt\\
&\leq (L_B+1)|U(t)|_{\mathbb{L}^2}^2 +\frac{1}{2}|B(0)|_{\mathbb{L}^2}^2 +\frac{1}{2}|F(t)|_{\mathbb{L}^2}^2,
\end{align}
where we use $|(B(U), U)_{\mathbb{L}^2}| \leq (L_B+\frac{1}{2})|U|_{\mathbb{L}^2}^2 + \frac{1}{2}|B(0)|_{\mathbb{L}^2}^2$ and \eqref{orth:mu:IU}.
By the Gronwall inequality, we conclude \eqref{1st_energy_AE_nu_1}.
\end{proof}

\begin{Lem}
\label{2nd_energy_AE_nu}
Let \(|\alpha| \leq \lambda/2\) and 
$U$ be a solution of ${\rm (AE)}_\nu$.
Then there exists a positive constant $C_2$ depending only on $\lambda, \kappa, L_B, T,\varphi(U_0)$, $\psi(U_0), |B(0)|_{\mathbb{L}^2}$, $|U_0|_{\mathbb{L}^2}$ and $\int_0^T |F|_{\mathbb{L}^2}^2 dt$ satisfying
\begin{align}
\label{2nd_energy_AE_nu_1}
\int_0^T \left| \frac{dU}{dt}(t)\right|_{\mathbb{L}^2}^2 dt + \int_0^T |\partial \varphi(U(t))|_{\mathbb{L}^2}^2 dt + \int_0^T |\partial \psi(U(t))|_{\mathbb{L}^2}^2 dt \leq C_2.
\end{align}
\end{Lem}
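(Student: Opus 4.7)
\noindent\emph{Proof sketch.} The plan is to test (AE)$_\nu$ successively against $\partial\varphi(U(t))$ and $\partial\psi(U(t))$, and then to read $\frac{dU}{dt}$ off the equation. Throughout I rely on three $\nu$-uniform facts: first, the chain rules $\bigl(\frac{dU}{dt},\partial\varphi(U)\bigr)_{\mathbb{L}^2}=\frac{d}{dt}\varphi(U)$ and analogously for $\psi$, valid because $U \in W^{1,2}(0,T;\mathbb{L}^2)$ with $U(t)\in \mathrm{D}(\partial\varphi)\cap\mathrm{D}(\partial\psi)$ a.e.; second, the standard Yosida estimate $|\partial\varphi_\nu(U)|_{\mathbb{L}^2}\le|\partial\varphi(U)|_{\mathbb{L}^2}$ for $U\in \mathrm{D}(\partial\varphi)$; and third, the positivity $(\partial\varphi(U),\partial\psi(U))_{\mathbb{L}^2}\ge 0$, which is exactly the integrand in \eqref{akdfjgfdgfk} after setting $V=U$ and integrating by parts.

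\noindent\emph{Step 1 (estimate for $\int_0^T|\partial\varphi(U)|^2_{\mathbb{L}^2}\,dt$).} Testing (AE)$_\nu$ with $\partial\varphi(U)$ gives
\[
\tfrac{d}{dt}\varphi(U)+\lambda|\partial\varphi(U)|^2_{\mathbb{L}^2}+\kappa(\partial\varphi(U),\partial\psi(U))_{\mathbb{L}^2}=\bigl(F-B(U)-\alpha I\partial\varphi_\nu(U),\partial\varphi(U)\bigr)_{\mathbb{L}^2}.
\]
Drop the non-negative cross term; bound the cross term with $\alpha$ by $|\alpha|\,|\partial\varphi_\nu(U)|\,|\partial\varphi(U)|\le|\alpha|\,|\partial\varphi(U)|^2_{\mathbb{L}^2}$. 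Since $|\alpha|\le\lambda/2$, it is absorbed, leaving $\frac{\lambda}{2}|\partial\varphi(U)|^2_{\mathbb{L}^2}$ on the left. A standard Young bound on the $(F-B(U))$--term absorbs another $\frac{\lambda}{4}|\partial\varphi(U)|^2_{\mathbb{L}^2}$, and after integrating over $(0,T)$ the $\mathbb{L}^2$--bound on $U$ from Lemma~\ref{1st_energy_AE_nu} together with the Lipschitz estimate $|B(U)|_{\mathbb{L}^2}\le L_B|U|_{\mathbb{L}^2}+|B(0)|_{\mathbb{L}^2}$ yields $\int_0^T|\partial\varphi(U)|^2_{\mathbb{L}^2}\,dt\le C$ with $C$ depending only on the data listed in the statement.

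\noindent\emph{Step 2 (estimate for $\int_0^T|\partial\psi(U)|^2_{\mathbb{L}^2}\,dt$).} Testing against $\partial\psi(U)$ and dropping the non-negative $\lambda(\partial\varphi,\partial\psi)$--term,
\[
\tfrac{d}{dt}\psi(U)+\kappa|\partial\psi(U)|^2_{\mathbb{L}^2}\le\bigl(|F|_{\mathbb{L}^2}+|B(U)|_{\mathbb{L}^2}+|\alpha|\,|\partial\varphi(U)|_{\mathbb{L}^2}\bigr)|\partial\psi(U)|_{\mathbb{L}^2},
\]
where I used $|\partial\varphi_\nu(U)|\le|\partial\varphi(U)|$ once more. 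Young's inequality absorbs $\frac{\kappa}{2}|\partial\psi(U)|^2_{\mathbb{L}^2}$ on the left and produces $|\partial\varphi(U)|^2_{\mathbb{L}^2}$ on the right; integrating and plugging in the bound from Step~1 gives the desired control of $\int_0^T|\partial\psi(U)|^2_{\mathbb{L}^2}\,dt$. Finally, rewriting (AE)$_\nu$ as $\frac{dU}{dt}=F-\lambda\partial\varphi(U)-\kappa\partial\psi(U)-\alpha I\partial\varphi_\nu(U)-B(U)$, squaring in $\mathbb{L}^2$, and applying Steps~1--2 together with Lemma~\ref{1st_energy_AE_nu} closes the estimate for $\int_0^T|\frac{dU}{dt}|^2_{\mathbb{L}^2}\,dt$.

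\noindent\emph{Main obstacle.} The only genuinely delicate point is the cross term $\alpha(I\partial\varphi_\nu(U),\partial\varphi(U))_{\mathbb{L}^2}$. When $p=2$ the operator $\partial\varphi=-\Delta$ is linear, so the skew-symmetry of $I$ alone forces this inner product to vanish; for $p\ne 2$ no such orthogonality is available (the remark after Lemma~\ref{Lem:2.1} explicitly warns that $(\partial\varphi_\mu(U),I\partial\varphi(U))_{\mathbb{L}^2}$ need not vanish), so one is forced to use the crude Cauchy--Schwarz bound $|\alpha|\,|\partial\varphi|^2_{\mathbb{L}^2}$. The hypothesis $|\alpha|\le\lambda/2$ is precisely the absorption threshold that leaves a strictly positive multiple of $|\partial\varphi|^2_{\mathbb{L}^2}$ on the left-hand side of Step~1; without it, the scheme above does not close and one has to appeal to the finer CGL-region machinery of Section~4.
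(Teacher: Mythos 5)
Your proposal is correct and follows essentially the same route as the paper: test (AE)$_\nu$ against $\partial\varphi(U)$ and $\partial\psi(U)$, use $(\partial\varphi(U),\partial\psi(U))_{\mathbb{L}^2}\ge 0$ from \eqref{key_inequality_1}, control the cross term via $|\partial\varphi_\nu(U)|_{\mathbb{L}^2}\le|\partial\varphi(U)|_{\mathbb{L}^2}$ and the absorption condition $|\alpha|\le\lambda/2$, and then read $\frac{dU}{dt}$ off the equation. Your closing remark on why $|\alpha|\le\lambda/2$ is the precise absorption threshold also matches the paper's reason for introducing the subsequent induction on $\alpha$.
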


\begin{proof}
Multiplying ${\rm (AE)}_\mu$ by $\partial \varphi(U(t))$, we have for a.e. $t \in (0, T)$,
\[
\begin{aligned}
&
\begin{aligned}
&\frac{d}{dt} \varphi(U(t))
+ \lambda |\partial \varphi(U(t))|_{\mathbb{L}^2}^2\\
&\quad + \kappa (\partial \varphi(U(t)), \partial \psi(U(t)))_{\mathbb{L}^2}
+ \alpha (I \partial \varphi_\nu(U(t)), \partial \varphi(U(t)))_{\mathbb{L}^2} 
\end{aligned}\\
&=-(B(U(t)), \partial \varphi(U(t)))_{\mathbb{L}^2}
+ (F(t), \partial \varphi(U(t)))_{\mathbb{L}^2}\\
&\leq \frac{\lambda}{4}|\partial \varphi(U(t))|_{\mathbb{L}^2}^2
+ \frac{2}{\lambda}
\left\{2L_B^2|U(t)|_{\mathbb{L}^2}^2
+ 2|B(0)|_{\mathbb{L}^2}^2
+|F(t)|_{\mathbb{L}^2}^2\right\}
\end{aligned}
\]
Note that \eqref{key_inequality_1} implies $(\partial \varphi(U), \partial \psi(U))_{\mathbb{L}^2} \geq 0$ and
\[
\alpha (I \partial \varphi_\nu(U(t)), \partial \varphi(U(t)))_{\mathbb{L}^2} \geq -|\alpha||\partial\varphi(U)|_{\mathbb{L}^2}^2 \geq -\frac{\lambda}{2}|\partial\varphi(U)|_{\mathbb{L}^2}^2.
\]
Hence by Lemma \ref{1st_energy_AE_nu}, we have
\begin{equation}\label{ajdhcbsdhw}
\frac{d}{dt} \varphi(U(t))
+ \frac{\lambda}{4}|\partial\varphi(U(t))|_{\mathbb{L}^2}^2
\leq \frac{2}{\lambda}
\left\{2L_B^2C_1
+ 2|B(0)|_{\mathbb{L}^2}^2
+ |F(t)|_{\mathbb{L}^2}^2
\right\}.
\end{equation}
Then the integration of \eqref{ajdhcbsdhw} over $(0, t)$ with respect to $t \in (0, T]$ gives
\begin{equation}\label{iiryvxzmkdhqy}
\begin{aligned}
&\varphi(U(t))
+\frac{\lambda}{4} \int_0^t |\partial\varphi(U(s))|_{\mathbb{L}^2}^2ds\\
&\leq \varphi(U_0)
+ \frac{2}{\lambda}
\left\{2L_B^2 C_1T
+ 2T |B(0)|_{\mathbb{L}^2}^2
+ \int_0^T|F|_{\mathbb{L}^2}^2 dt
\right\}.
\end{aligned}
\end{equation}
Next multiplying (AE)\(_\nu\) by $\partial \psi(U(t))$, we have for a.e. $t \in (0, T)$,
\begin{align}
\notag
&\frac{d}{dt} \psi(U(t))
+\lambda (\partial \varphi(U(t)), \partial \psi(U(t)))_{\mathbb{L}^2}
+ \kappa |\partial \psi(U(t))|_{\mathbb{L}^2}^2\\
\notag&= 
\begin{aligned}[t]
&-\alpha (I \partial \varphi_\nu(U(t)), \partial \psi(U(t)))_{\mathbb{L}^2}\\
&-(B(U(t)), \partial \psi(U(t)))_{\mathbb{L}^2}
+(F(t), \partial \psi(U(t)))_{\mathbb{L}^2}
\end{aligned}\\
\label{iikshrtfgtsavfsdf}
&\leq
\begin{aligned}[t]
&\frac{3\kappa}{4}|\partial \psi(U(t))|_{\mathbb{L}^2}^2
+\frac{\alpha^2}{\kappa} |\partial\varphi(U(t))|_{\mathbb{L}^2}^2\\
&+\frac{1}{\kappa}
\left\{2L_B^2 C_1 + 2|B(0)|_{\mathbb{L}^2}^2
+|F(t)|_{\mathbb{L}^2}^2
\right\}.
\end{aligned}
\end{align}
Therefore the integration of \eqref{iikshrtfgtsavfsdf} on \((0, T)\) with respect to \(t\) together with \eqref{key_inequality_1} yields
\begin{align}
\notag
&\psi(U(t))
+\frac{\kappa}{4} \int_0^t |\partial\psi(U(s))|_{\mathbb{L}^2}^2ds\\
\label{uyrgchjsdadhsdgfs}
&\leq 
\begin{aligned}[t]
&\psi(U_0)
+\frac{\alpha^2}{\kappa}\int_0^t |\partial \varphi(U(t))|_{\mathbb{L}^2}^2\\
&+\frac{1}{\kappa}
\left\{2L_B^2 C_1T
+ 2T|B(0)|_{\mathbb{L}^2}^2
+\int_0^T|F|_{\mathbb{L}^2}^2 dt
\right\}.
\end{aligned}
\end{align}
Thus from \eqref{iiryvxzmkdhqy}, \eqref{uyrgchjsdadhsdgfs} and ${\rm (AE)}_\nu$, we derive \eqref{2nd_energy_AE_nu_1}. 
\end{proof}

Now we are in the position of proving Proposition \ref{solvability_of_AE} for \(|\alpha| \leq \lambda/2\).

\begin{proof}
[Proof of Proposition \ref{solvability_of_AE} for \(|\alpha| \leq \lambda/2\)]\quad
Let $U_\nu$ be a solution of ${\rm (AE)}_\nu$.

First we note \(\mathbb{V}_p = \mathbb{W}_0^{1,p}(\Omega) \subset \mathbb{L}^2(\Omega)\), since \(\Omega\) is bounded and \(\max\left\{1, \frac{2N}{2+N}\right\} < p\).
By Lemmas \ref{1st_energy_AE_nu} and \ref{2nd_energy_AE_nu}, we can apply Ascoli's theorem on \(U_\nu\) so that there exists a subsequence \(\{U_{\nu_n}\}\) and \(U \in {\rm L}^2(0, T;  \mathbb{L}^2(\Omega))\) such that
\begin{alignat}{4}
\label{Uconv}
U_{\nu_n} &\to U&&\quad\mbox{strongly in}\ {\rm C}([0, T];  \mathbb{L}^2(\Omega)),\\
\frac{dU_{\nu_n}}{dt} &\rightharpoonup \frac{dU}{dt}&&\quad\mbox{weakly in}\ {\rm L}^2(0, T;  \mathbb{L}^2(\Omega)),\\
\partial\varphi(U_{\nu_n}) &\rightharpoonup \partial\varphi(U)&&\quad\mbox{weakly in}\ {\rm L}^2(0, T;  \mathbb{L}^2(\Omega)),\\
\label{ppsiUconv}
\partial\psi(U_{\nu_n}) &\rightharpoonup \partial\psi(U)&&\quad\mbox{weakly in}\ {\rm L}^2(0, T;  \mathbb{L}^2(\Omega)),\\
J_{\nu_n}^{\partial\psi}(U_{\nu_n}) &\to U&&\quad\mbox{strongly in}\ {\rm L}^2(0, T;  \mathbb{L}^2(\Omega)),\\
\partial\psi_{\nu_n}(U_{\nu_n}) &\rightharpoonup \partial\psi(U)&&\quad\mbox{weakly in}\ {\rm L}^2(0, T;  \mathbb{L}^2(\Omega)).
\end{alignat}
where we used the demiclosedness of \(\frac{d}{dt}, \partial\varphi\) and \(\partial\psi\).
Thus is \(U\) is the desired solution.
\end{proof}

We next proceed by induction.
Assume Proposition \ref{solvability_of_AE} holds with \(\alpha = \frac{n\lambda}{2}\) for some \(n \in \mathbb{Z}\).
Then we have the solution \(U_\nu = U\) of
\[
\tag*{(AE)\(_\nu^n\)}
\left\{
\begin{aligned}
&
\begin{aligned}[t]
\frac{dU}{dt}(t)& + \partial (\lambda \varphi + \kappa \psi)(U) + \frac{n\lambda}{2}I \partial \varphi(U)\\
& + \alpha_0\partial\varphi_\nu(U) + B(U) = F(t),\ t \in (0, T),
\end{aligned}\\
&U(0) =U_0,
\end{aligned}
\right.
\]
with \(|\alpha_0| \leq \lambda/2\).

For the solution of (AE)\(_\nu^n\), we have again the following a priori estimates.
\begin{Lem}
\label{1st_energy_AE_nu2}
Let \(|\alpha_0| \leq \lambda/2\) and 
$U$ be a solution of {\rm (AE)\(_\nu^n\)}.
Then there exists a positive constant $C_1$ depending only on $\lambda, \kappa, L_B, T, |B(0)|_{\mathbb{L}^2}$, $|U_0|_{\mathbb{L}^2}$ and $\int_0^T |F|_{\mathbb{L}^2}^2 dt$ satisfying
\begin{equation}\label{1st_energy_AE_nu2_1}
\sup_{t \in [0,T]}|U(t)|_{\mathbb{L}^2}^2 \leq C_1.
\end{equation}
\end{Lem}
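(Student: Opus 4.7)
The plan is to replicate the proof of Lemma \ref{1st_energy_AE_nu} almost word for word. Taking the $\mathbb{L}^2(\Omega)$-inner product of (AE)$_\nu^n$ with $U(t)$ yields
\[
\frac{1}{2}\frac{d}{dt}|U(t)|_{\mathbb{L}^2}^2 + p\lambda\varphi(U(t)) + q\kappa\psi(U(t)) = -(B(U(t)),U(t))_{\mathbb{L}^2} + (F(t),U(t))_{\mathbb{L}^2},
\]
where I use the Euler identity $(\partial(\lambda\varphi+\kappa\psi)(U),U)_{\mathbb{L}^2} = p\lambda\varphi(U)+q\kappa\psi(U)$ for the positively homogeneous subdifferentials given by \eqref{delvaphi}--\eqref{delpsi}, together with the skew-symmetric orthogonality relations \eqref{orth:IU} and \eqref{orth:mu:IU}, which make the two operators $\tfrac{n\lambda}{2} I\partial\varphi(U)$ and $\alpha_0 I\partial\varphi_\nu(U)$ contribute zero when paired with $U$ in $\mathbb{L}^2(\Omega)$.

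Dropping the nonnegative quantities $p\lambda\varphi(U(t))$ and $q\kappa\psi(U(t))$, then estimating the right-hand side through the Lipschitz bound
\[
|(B(U(t)),U(t))_{\mathbb{L}^2}| \leq (L_B+\tfrac12)|U(t)|_{\mathbb{L}^2}^2 + \tfrac12|B(0)|_{\mathbb{L}^2}^2
\]
and Young's inequality applied to $(F(t),U(t))_{\mathbb{L}^2}$, I arrive at the differential inequality
\[
\frac{d}{dt}|U(t)|_{\mathbb{L}^2}^2 \leq (2L_B+3)|U(t)|_{\mathbb{L}^2}^2 + |B(0)|_{\mathbb{L}^2}^2 + |F(t)|_{\mathbb{L}^2}^2.
\]
An application of Gronwall's inequality on $[0,T]$ then produces a constant $C_1$ depending only on $\lambda,\kappa,L_B,T,|B(0)|_{\mathbb{L}^2}, |U_0|_{\mathbb{L}^2}$ and $\int_0^T |F|_{\mathbb{L}^2}^2\,dt$, proving \eqref{1st_energy_AE_nu2_1}.

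I do not anticipate any genuine obstacle: the statement is an exact analogue of Lemma \ref{1st_energy_AE_nu}, and the inductive splitting $\alpha = \tfrac{n\lambda}{2} + \alpha_0$ with $|\alpha_0|\leq \lambda/2$ was designed precisely so that this basic $\mathbb{L}^2$ energy balance is preserved at every stage of the induction, the only new contributions being skew-symmetric against $U$ via \eqref{orth:IU}--\eqref{orth:mu:IU}. The restriction $|\alpha_0|\leq \lambda/2$ itself plays no role here and will only become essential in the higher-order estimate analogous to Lemma \ref{2nd_energy_AE_nu}, where the $\alpha^2/\kappa$- and $|\alpha||\partial\varphi(U)|_{\mathbb{L}^2}^2$-terms need to be absorbed by $\lambda|\partial\varphi(U)|_{\mathbb{L}^2}^2$.
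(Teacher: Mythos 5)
Your proof is correct and follows essentially the same route as the paper: the authors' own proof of Lemma \ref{1st_energy_AE_nu2} simply says that, noting \eqref{orth:IU}, one repeats the argument of Lemma \ref{1st_energy_AE_nu}, which is exactly the multiplication by $U$, orthogonality of the $I$-terms via \eqref{orth:IU}--\eqref{orth:mu:IU}, the Lipschitz bound on $B$, and Gronwall that you carry out. Your closing remark that $|\alpha_0|\leq\lambda/2$ is irrelevant at this stage and only matters for the second energy estimate is also consistent with the paper.
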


\begin{proof}
Noting \eqref{orth:IU}, we can verify this in much the same way as in the proof of Lemme \ref{1st_energy_AE_nu}.
\end{proof}

\begin{Lem}
\label{2nd_energy_AE_nu2}
Let \(|\alpha_0| \leq \lambda/2\) and 
$U$ be a solution of ${\rm (AE)}_\nu$.
Then there exists a positive constant $C_2$ depending only on $\lambda, \kappa, \alpha, L_B, T,\varphi(U_0)$, $\psi(U_0), |B(0)|_{\mathbb{L}^2}$, $|U_0|_{\mathbb{L}^2}$ and $\int_0^T |F|_{\mathbb{L}^2}^2 dt$ satisfying
\begin{align}
\label{2nd_energy_AE_nu2_1}
\int_0^T \left| \frac{dU}{dt}(t)\right|_{\mathbb{L}^2}^2 dt + \int_0^T |\partial \varphi(U(t))|_{\mathbb{L}^2}^2 dt + \int_0^T |\partial \psi(U(t))|_{\mathbb{L}^2}^2 dt \leq C_2.
\end{align}
\end{Lem}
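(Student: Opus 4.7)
The plan is to mirror the two-energy scheme of Lemma~\ref{2nd_energy_AE_nu}, with the new un-approximated term \(\tfrac{n\lambda}{2}I\partial\varphi(U)\) handled by skew-symmetry when tested against \(\partial\varphi(U)\) and by a brute-force Young bound when tested against \(\partial\psi(U)\). The point is that the extra \(|\partial\varphi(U)|_{\mathbb{L}^2}^2\) contribution produced by Young's inequality is already under control from the first energy estimate, so no CGL-type smallness restriction on \(|n|\) is needed.

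First, I would multiply (AE)\(_\nu^n\) by \(\partial\varphi(U(t))\) in \(\mathbb{L}^2\). Property \eqref{skew-symmetric_property} gives \((I\partial\varphi(U), \partial\varphi(U))_{\mathbb{L}^2} = 0\), while the standard Yosida bound \(|\partial\varphi_\nu(U)|_{\mathbb{L}^2} \leq |\partial\varphi(U)|_{\mathbb{L}^2}\) together with \(|\alpha_0| \leq \lambda/2\) yields
\[
|\alpha_0 (I\partial\varphi_\nu(U), \partial\varphi(U))_{\mathbb{L}^2}| \leq \tfrac{\lambda}{2}|\partial\varphi(U)|_{\mathbb{L}^2}^2.
\]
Combined with \((\partial\varphi(U), \partial\psi(U))_{\mathbb{L}^2} \geq 0\) (a consequence of \eqref{key_inequality_1}) and the \(\mathbb{L}^2\)-bound of Lemma~\ref{1st_energy_AE_nu2}, the manipulations leading to \eqref{ajdhcbsdhw}--\eqref{iiryvxzmkdhqy} would produce
\[
\varphi(U(t)) + \tfrac{\lambda}{4}\int_0^t |\partial\varphi(U(s))|_{\mathbb{L}^2}^2\, ds \leq K_1,
\]
with \(K_1\) depending only on the admissible data.

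Next, I would multiply (AE)\(_\nu^n\) by \(\partial\psi(U(t))\). The only genuinely new cross-term is \(\tfrac{n\lambda}{2}(I\partial\varphi(U), \partial\psi(U))_{\mathbb{L}^2}\); here the key inequality \eqref{key_inequality_1} is not directly applicable since no smallness on \(|n|c_q\) is imposed, but plain Young's inequality gives
\[
\left|\tfrac{n\lambda}{2}(I\partial\varphi(U), \partial\psi(U))_{\mathbb{L}^2}\right|
\leq \tfrac{\kappa}{8}|\partial\psi(U)|_{\mathbb{L}^2}^2 + \tfrac{n^2\lambda^2}{2\kappa}|\partial\varphi(U)|_{\mathbb{L}^2}^2.
\]
Bounding the \(\alpha_0\)-, \(B\)- and \(F\)-terms by \(\kappa/8\)-fractions in the same fashion and using \(\lambda(\partial\varphi(U), \partial\psi(U))_{\mathbb{L}^2} \geq 0\), I can absorb a total of \(\kappa/2\) into the left-hand side and integrate to obtain
\[
\psi(U(t)) + \tfrac{\kappa}{2}\int_0^t |\partial\psi(U(s))|_{\mathbb{L}^2}^2\, ds
\leq K_2 + C(n, \lambda, \kappa, \alpha_0)\int_0^t |\partial\varphi(U(s))|_{\mathbb{L}^2}^2\, ds,
\]
and feeding in the first-step bound on \(\int_0^T|\partial\varphi(U)|_{\mathbb{L}^2}^2\, dt\) closes the argument for \(\partial\psi(U)\).

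Finally, rearranging (AE)\(_\nu^n\) to isolate \(dU/dt\), every term on the right is by now controlled in \(L^2(0, T; \mathbb{L}^2(\Omega))\) (using once more \(|\partial\varphi_\nu(U)|_{\mathbb{L}^2} \leq |\partial\varphi(U)|_{\mathbb{L}^2}\) for the Yosida piece), which yields the \(L^2\)-bound on \(dU/dt\) and hence \eqref{2nd_energy_AE_nu2_1}. The main technical point is the cross-term \(\tfrac{n\lambda}{2}(I\partial\varphi(U), \partial\psi(U))_{\mathbb{L}^2}\) in the second energy identity: because the elegant CGL-type bound \eqref{key_inequality_1} cannot be invoked without a smallness hypothesis on \(|n|c_q\), the rough Young estimate must do the job, and this is exactly what makes the independent \(L^2\)-control of \(\partial\varphi(U)\) from the first step indispensable. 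The dependence of \(C_2\) on \(\alpha\) in the statement reflects the constant \(C(n, \lambda, \kappa, \alpha_0)\) generated at this step.
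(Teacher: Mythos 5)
Your proposal is correct and follows essentially the same route as the paper, which simply states that the lemma is proved ``in much the same way as'' Lemma \ref{2nd_energy_AE_nu} once one notes that \eqref{skew-symmetric_property} annihilates the new term \(\frac{n\lambda}{2}(I\partial\varphi(U),\partial\varphi(U))_{\mathbb{L}^2}\) in the first energy identity. Your treatment of the remaining pieces --- the Yosida contraction bound for the \(\alpha_0\)-term against \(\partial\varphi(U)\), the rough Young estimate for \(\frac{n\lambda}{2}(I\partial\varphi(U),\partial\psi(U))_{\mathbb{L}^2}\) fed by the already-obtained \(\mathrm{L}^2\)-bound on \(\partial\varphi(U)\), and the recovery of \(dU/dt\) from the equation --- is exactly the intended adaptation of \eqref{ajdhcbsdhw}--\eqref{uyrgchjsdadhsdgfs}.
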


\begin{proof}
Noting \eqref{skew-symmetric_property}, we can prove this in much the same way as in the proof of Lemme \ref{2nd_energy_AE_nu}.
\end{proof}

\begin{proof}
[Proof of Proposition \ref{solvability_of_AE}]\quad
We prove by mathematical induction.
For every \(\alpha \in \mathbb{R}\), there exist unique \(n \in \mathbb{Z}\) and \(\alpha_0 \in \left(-\frac{\lambda}{2}, \frac{\lambda}{2}\right)\) such that
\[
\alpha = \frac{n\lambda}{2} + \alpha_0.
\]
By Lemma \ref{1st_energy_AE_nu}, Lemma \ref{2nd_energy_AE_nu} and the proof of Proposition \ref{solvability_of_AE} for \(|\alpha| \leq \lambda/2\), we know that (AE)\(_\nu^n\) with \(n=1\) admits a solution.
Then Lemma \ref{1st_energy_AE_nu2}, Lemma \ref{2nd_energy_AE_nu2} and the same arguments as in the proof of Proposition \ref{solvability_of_AE} for \(|\alpha| \leq \lambda/2\) assure the existence of solution of (AE)\(_\nu^n\) with \(n=2\).
Thus to complete the proof, it suffices to repeat this procedure up to \(n=n\) and apply Lemma \ref{1st_energy_AE_nu2}, Lemma \ref{2nd_energy_AE_nu2} and the same arguments as in the proof of Proposition \ref{solvability_of_AE} for \(|\alpha| \leq \lambda/2\).
\end{proof}

Here we proceed to the proof of Proposition \ref{main_result_bdd}.
We consider the following approximate equation.
\[
\tag*{(AE)\(_\mu\)}
\left\{
\begin{aligned}
&
\begin{aligned}
\frac{dU}{dt}(t) &+ \partial (\lambda \varphi + \kappa \psi)(U)\\ 
& +\alpha I \partial \varphi(U) + \beta I\partial\psi_\mu(U) - \gamma U = F(t),\ t \in (0, T),
\end{aligned}\\
&U(0) =U_0.
\end{aligned}
\right.
\]

By Proposition \ref{solvability_of_AE}, there exists a unique solution \(U_\mu = U\) of (AE)\(_\mu\).
We are going to establish the following a priori estimates of \(U_\mu\) independent of \(\mu\).

\begin{Lem}
\label{1st_energy_AE_mu}
Let $U$ be a solution of (AE)\(_\mu\).
Then there exists a positive constant $C_1$ depending only on $\gamma$, $T$, $|U_0|_{\mathbb{L}^2}$ and $\int_0^T|F|_{\mathbb{L}^2}^2 dt$ satisfying
\begin{align}\label{1st_energy_AE_mu_1}
\sup_{t \in [0,T]}|U(t)|_{\mathbb{L}^2}^2
+\int_0^T \varphi(U(s)) ds
+\int_0^T \psi(U(s)) ds
\leq C_1.
\end{align}
\end{Lem}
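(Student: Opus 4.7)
}
The plan is to derive the estimate by taking the $\mathbb{L}^2(\Omega)$ inner product of $(\mathrm{AE})_\mu$ with the solution $U(t)$ itself. Since $U(t) \in \mathrm{D}(\partial\varphi)\cap\mathrm{D}(\partial\psi)$ for a.e.\ $t$ by the regularity part of Proposition \ref{solvability_of_AE}, and since $\partial(\lambda\varphi+\kappa\psi)(U)=\lambda\partial\varphi(U)+\kappa\partial\psi(U)$ by (\ref{adfjdhss}), pairing with $U$ yields
\[
\tfrac{1}{2}\tfrac{d}{dt}|U(t)|_{\mathbb{L}^2}^{2}
+\lambda(\partial\varphi(U),U)_{\mathbb{L}^2}
+\kappa(\partial\psi(U),U)_{\mathbb{L}^2}
+\alpha(I\partial\varphi(U),U)_{\mathbb{L}^2}
+\beta(I\partial\psi_\mu(U),U)_{\mathbb{L}^2}
-\gamma|U|_{\mathbb{L}^2}^{2}=(F,U)_{\mathbb{L}^2}.
\]

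The crucial step is to eliminate the two skew-symmetric (non-monotone) terms. By the skew-symmetry (\ref{skew-symmetric_property}) we have $(IW,U)_{\mathbb{L}^2}=-(W,IU)_{\mathbb{L}^2}$; applying the orthogonality relations (\ref{orth:IU}) and (\ref{orth:mu:IU}) of Lemma \ref{Lem:2.1} to $W=\partial\varphi(U)$ and $W=\partial\psi_\mu(U)$ respectively, both terms vanish. Next, the $p$-homogeneity of $\varphi$ (respectively the $q$-homogeneity of $\psi$) gives Euler's identities $(\partial\varphi(U),U)_{\mathbb{L}^2}=p\,\varphi(U)$ and $(\partial\psi(U),U)_{\mathbb{L}^2}=q\,\psi(U)$. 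Combining these observations with Young's inequality $(F,U)_{\mathbb{L}^2}\le\tfrac12|F|_{\mathbb{L}^2}^{2}+\tfrac12|U|_{\mathbb{L}^2}^{2}$ produces the differential inequality
\[
\tfrac{d}{dt}|U(t)|_{\mathbb{L}^2}^{2}+2p\lambda\,\varphi(U(t))+2q\kappa\,\psi(U(t))
\le(2\gamma+1)|U(t)|_{\mathbb{L}^2}^{2}+|F(t)|_{\mathbb{L}^2}^{2}.
\]

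Dropping the non-negative $\varphi$ and $\psi$ terms and applying Gronwall's inequality on $[0,T]$ yields
\[
\sup_{t\in[0,T]}|U(t)|_{\mathbb{L}^2}^{2}
\le e^{(2\gamma+1)T}\Bigl(|U_0|_{\mathbb{L}^2}^{2}+\int_0^{T}|F(s)|_{\mathbb{L}^2}^{2}\,ds\Bigr),
\]
with the bound independent of $\mu$. Reintegrating the differential inequality over $[0,T]$ and substituting the sup bound then controls $\int_0^{T}\varphi(U(s))\,ds$ and $\int_0^{T}\psi(U(s))\,ds$ by a constant of the same type, giving (\ref{1st_energy_AE_mu_1}). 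There is no real obstacle here beyond carefully invoking the orthogonality lemma to annihilate the $\alpha I\partial\varphi(U)$ and $\beta I\partial\psi_\mu(U)$ contributions; this is precisely the point where the skew-symmetric realization of $i$ pays off, and it is what permits a first energy estimate without any restriction on $\alpha,\beta,\gamma$ or on the CGL-region.
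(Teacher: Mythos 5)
Your proposal is correct and follows essentially the same route as the paper: the paper proves this lemma by repeating the argument of Lemmas \ref{1st_energy_AE_nu} and \ref{1st_energy_AE_nu2} (multiply (AE)\(_\mu\) by \(U\), kill the skew terms via \eqref{orth:IU} and \eqref{orth:mu:IU}, use the Euler identities \((\partial\varphi(U),U)_{\mathbb{L}^2}=p\varphi(U)\), \((\partial\psi(U),U)_{\mathbb{L}^2}=q\psi(U)\), and conclude by Young's and Gronwall's inequalities). Your added remark about reintegrating to control \(\int_0^T\varphi\) and \(\int_0^T\psi\) is exactly what is needed for the full estimate \eqref{1st_energy_AE_mu_1}.
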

\begin{proof}
This lemma is proved in the same way as for Lemmas \ref{1st_energy_AE_nu} and \ref{1st_energy_AE_nu2}.
Here we use \eqref{orth:IU} and \eqref{orth:mu:IU}.
\end{proof}

\begin{Lem}[(cf. \cite{KOS1} Lemma 6.2)]
\label{2nd_energy_AE_mu}
Let $U$ be a solution of (AE)\(_\mu\), and let $\left(\frac{\alpha}{\lambda}, \frac{\beta}{\kappa}\right) \in {\rm CGL}(c_q^{-1})$.
Then there exists a positive constant $C_2$ depending only on $\lambda, \kappa, \alpha, \beta, \gamma$, $T,\varphi(U_0), \psi(U_0)$, $|U_0|_{\mathbb{L}^2}$ and $\int_0^T|F|_{\mathbb{L}^2}^2 dt$ satisfying
\begin{equation}\label{2nd_energy_AE_mu_1}
\begin{aligned}
&\sup_{t \in [0,T]}\varphi(U(t))
+ \int_0^T \left| \frac{dU(t)}{dt}\right|_{\mathbb{L}^2}^2 dt\\
&+ \int_0^T |\partial \varphi(U(t))|_{\mathbb{L}^2}^2 dt
+ \int_0^T |\partial \psi(U(t))|_{\mathbb{L}^2}^ 2dt
\leq C_2.
\end{aligned}
\end{equation}
\end{Lem}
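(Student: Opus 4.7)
The plan is to derive two pointwise-in-time energy identities by testing (AE)$_\mu$ in $\mathbb{L}^2(\Omega)$ against the multipliers $\partial\varphi(U(t))$ and $\partial\psi(U(t))$, and then to combine them into a single coercive estimate via the CGL-region hypothesis.

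Testing against $\partial\varphi(U)$ produces $\tfrac{d}{dt}\varphi(U) + \lambda|\partial\varphi(U)|_{\mathbb{L}^2}^2 + \kappa(\partial\varphi(U),\partial\psi(U))_{\mathbb{L}^2}$ on the left; the skew-symmetry \eqref{skew-symmetric_property} annihilates $\alpha(I\partial\varphi(U),\partial\varphi(U))_{\mathbb{L}^2}$, and the indefinite cross term $\beta(I\partial\psi_\mu(U),\partial\varphi(U))_{\mathbb{L}^2}$ is dominated by $|\beta|c_q(\partial\varphi(U),\partial\psi(U))_{\mathbb{L}^2}$ via the key inequality \eqref{key_inequality_2}. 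Young's inequality together with the $\mathbb{L}^2$-bound from Lemma \ref{1st_energy_AE_mu} absorbs $\gamma(U,\partial\varphi(U))_{\mathbb{L}^2}$ and $(F(t),\partial\varphi(U))_{\mathbb{L}^2}$ into $\tfrac{\lambda}{4}|\partial\varphi(U)|_{\mathbb{L}^2}^2 + R_1(t)$ with $R_1 \in {\rm L}^1(0,T)$ depending only on the data. Symmetrically, testing against $\partial\psi(U)$ uses \eqref{orth:Ipsi} to kill $\beta(I\partial\psi_\mu(U),\partial\psi(U))_{\mathbb{L}^2}$ and \eqref{key_inequality_1} to control $\alpha(I\partial\varphi(U),\partial\psi(U))_{\mathbb{L}^2}$ by $|\alpha|c_q(\partial\varphi(U),\partial\psi(U))_{\mathbb{L}^2}$.

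I would then combine the two differential inequalities with weights $\lambda$ and $\kappa$, so that the leading dissipative contribution forms the perfect square $|\lambda\partial\varphi(U) + \kappa\partial\psi(U)|_{\mathbb{L}^2}^2 = \lambda^2|\partial\varphi(U)|_{\mathbb{L}^2}^2 + 2\lambda\kappa(\partial\varphi(U),\partial\psi(U))_{\mathbb{L}^2} + \kappa^2|\partial\psi(U)|_{\mathbb{L}^2}^2$. The hypothesis $(\alpha/\lambda,\beta/\kappa) \in {\rm CGL}(c_q^{-1})$ is designed precisely so that the combined cross term $\lambda\beta(I\partial\psi_\mu(U),\partial\varphi(U))_{\mathbb{L}^2} + \kappa\alpha(I\partial\varphi(U),\partial\psi(U))_{\mathbb{L}^2}$ can be absorbed into this quadratic form, leaving a coercive remainder of size $\delta(|\partial\varphi(U)|_{\mathbb{L}^2}^2 + |\partial\psi(U)|_{\mathbb{L}^2}^2)$ for some $\delta > 0$. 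Integration on $[0,t]$ and Gronwall's lemma yield the bounds on $\sup_{t\in[0,T]}\varphi(U(t))$ and $\int_0^T (|\partial\varphi(U)|_{\mathbb{L}^2}^2 + |\partial\psi(U)|_{\mathbb{L}^2}^2)\,dt$; the bound on $\int_0^T|dU/dt|_{\mathbb{L}^2}^2\,dt$ then follows by taking the $\mathbb{L}^2$-norm of (AE)$_\mu$ itself, using also that $|I\partial\psi_\mu(U)|_{\mathbb{L}^2} \leq |\partial\psi(U)|_{\mathbb{L}^2}$ since $I$ is an isometry and the resolvent is nonexpansive.

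The main obstacle is the absorption step in the combined estimate. A naive term-by-term use of Lemma \ref{key_inequality} yields only the sufficient condition $c_q(|\alpha/\lambda|+|\beta/\kappa|) < 2$, which is strictly stronger than membership in ${\rm CGL}(c_q^{-1})$. Recovering the full CGL-region hypothesis requires treating the two cross terms jointly as a single quadratic form in the $\mathbb{L}^2$-inner products $(\partial\varphi(U),\partial\psi(U))_{\mathbb{L}^2}$ and $(\partial\varphi(U),I\partial\psi(U))_{\mathbb{L}^2}$, exploiting the sign compatibility when $\alpha\beta \geq 0$ (the regime $S_3$) and, in the remaining regime $S_4$, invoking the sharp Bessel-type bound \eqref{consequence_from_orthogonality_2}. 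This is precisely the refinement carried out in \cite{KOS1} Lemma 6.2 for $p=2$, and Lemma \ref{key_inequality} makes the same strategy available for general $p$.
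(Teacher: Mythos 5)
Your overall architecture --- two energy inequalities from the multipliers $\partial\varphi(U)$ and a $\psi$-type multiplier, a weighted combination, and absorption of the cross terms via the CGL-region hypothesis --- matches the paper's, but there is a genuine gap exactly where you yourself locate ``the main obstacle'': the absorption step is never carried out, and the setup you commit to in your first two paragraphs cannot deliver it. Two of your specific choices are the problem. First, you fix the weights to be $\lambda$ and $\kappa$ so as to produce the perfect square $|\lambda\partial\varphi(U)+\kappa\partial\psi(U)|_{\mathbb{L}^2}^2$; with these weights, bounding each cross term by $c_q(\partial\varphi(U),\partial\psi(U))_{\mathbb{L}^2}$ via Lemma \ref{key_inequality} yields only $c_q(|\alpha|/\lambda+|\beta|/\kappa)<2$, as you note, which excludes most of ${\rm S}_3\cup{\rm S}_4$. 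The paper instead keeps a \emph{free} weight $\delta^2$ on the $\varphi$-inequality and chooses it according to the location of $(\alpha/\lambda,\beta/\kappa)$ (for instance $\delta^2=\alpha/\beta$ when $\alpha\beta>0$, and a discriminant computation otherwise); no single fixed weight covers the whole region. Second, you test the second equation with $\partial\psi(U)$, so your two cross terms are $\beta(\partial\varphi(U),I\partial\psi_\mu(U))_{\mathbb{L}^2}$ and $-\alpha(\partial\varphi(U),I\partial\psi(U))_{\mathbb{L}^2}$, i.e.\ multiples of two \emph{different} scalar functions of $t$. The joint treatment you invoke requires both cross terms to be multiples of the same quantity $B_\mu:=(\partial\varphi(U),I\partial\psi_\mu(U))_{\mathbb{L}^2}$; this is precisely why the paper tests with $\partial\psi_\mu(U)$ (at the cost of working with $\psi_\mu$ and the auxiliary facts $(\partial\psi(U),\partial\psi_\mu(U))_{\mathbb{L}^2}\geq|\partial\psi_\mu(U)|_{\mathbb{L}^2}^2$ and $(U,\partial\psi_\mu(U))_{\mathbb{L}^2}\leq q\psi(U)$). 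Only then does the combined bad term collapse to $(\delta^2\beta-\alpha)B_\mu$, which can be made to vanish identically when $\alpha\beta>0$ and otherwise is dominated using $G\geq G_\mu\geq c_q^{-1}|B_\mu|$ (with $G:=(\partial\varphi(U),\partial\psi(U))_{\mathbb{L}^2}$, $G_\mu:=(\partial\varphi(U),\partial\psi_\mu(U))_{\mathbb{L}^2}$) together with the Bessel-type bound $\sqrt{G_\mu^2+B_\mu^2}\geq\sqrt{1+c_q^{-2}}\,|B_\mu|$ extracted from \eqref{consequence_from_orthogonality_2} after an arithmetic--geometric mean step with a small parameter $\epsilon$.

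Deferring the refinement to \cite{KOS1} Lemma 6.2 does not close the gap: that computation \emph{is} the content of the present lemma, and your framework (fixed weights, $\partial\psi(U)$ as the second multiplier) would have to be rebuilt before it applies. The remaining ingredients of your proposal are sound and agree with the paper: the multiplier $\partial\psi(U)$ does appear, but only \emph{after} $\int_0^T|\partial\varphi(U)|_{\mathbb{L}^2}^2\,dt$ has been bounded, at which point the $\alpha$-cross term is handled crudely by Young's inequality and the $\beta$-term vanishes by \eqref{orth:Ipsi}; and the bound on $\int_0^T|dU/dt|_{\mathbb{L}^2}^2\,dt$ is indeed read off from the equation at the end.
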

\begin{proof}
Let $V(t):=(1+ \mu \partial \psi)^{-1}U(t)$.
Then using the facts that $ U = V + \mu \partial \psi(V)$, $(\partial \psi(V) \cdot V)_{\mathbb{R}^2} = q \psi(V) \geq 0$ and $\psi(V) + \frac{\mu}{2} |\partial \psi(V)|^2 =: \psi_\mu(U) \leq \psi(U)$, we get
\[
\begin{aligned}
&(\partial \psi(U), \partial \psi_\mu(U))_{\mathbb{L}^2}
=\int_\Omega |U|_{\mathbb{R}^2}^{q-2} |V|_{ \mathbb{R}^2}^{q-2} (U \cdot V)_{\mathbb{R}^2}
\geq \int_\Omega |V|_{\mathbb{R}^2}^{2(q-1)}
=|\partial \psi_\mu(U)|_{\mathbb{L}^2}^2,\\
&(U, \partial \psi_\mu(U))_{\mathbb{L}^2}
= q \psi(V)+\mu|\partial \psi(V)|_{\mathbb{L}^2}^2
=q\psi_\mu(U)-(\frac{q}{2}-1)\mu |\partial\psi(V)|_{\mathbb{L}^2}^2
\leq q \psi(U).
\end{aligned}
\]
Hence by virtue of these properties, multiplication of (AE)\(_\mu\) by $\partial \varphi(U(t))$ and $\partial \psi_\mu(U(t))$ together with  \eqref{skew-symmetric_property} give
\begin{align}
\label{afdidfhafdkjftt}
&\frac{d}{dt} \varphi(U(t))
+ \lambda |\partial \varphi(U)|_{\mathbb{L}^2}^2
+ \kappa G(t)
+ \beta B_\mu(t)
= p \gamma_+ \varphi(U(t))
+ (F, \partial \varphi(U))_{\mathbb{L}^2},\\
\label{fsdksdfk}
&\frac{d}{dt} \psi_\mu(U(t))
+ \kappa |\partial \psi_\mu(U)|_{\mathbb{L}^2}^2
+ \lambda G_\mu(t)
- \alpha  B_\mu(t)
\leq q \gamma_+ \psi(U(t))
+ (F, \partial \psi_\mu(U))_{\mathbb{L}^2},
\end{align}
where $\gamma_+:=\max \{\gamma, 0\}$ and
\[
G:=(\partial \varphi(U), \partial \psi(U))_{\mathbb{L}^2},\
G_\mu:=(\partial \varphi(U), \partial \psi_\mu(U))_{\mathbb{L}^2},\
B_\mu:=(\partial \varphi(U), I \partial \psi_\mu(U))_{\mathbb{L}^2}.
\]
We add \eqref{afdidfhafdkjftt}$\times \delta^2$ to \eqref{fsdksdfk} for some $\delta >0$ to get
\begin{align}
\notag
&\frac{d}{dt} \left\{
\delta^2 \varphi(U) + \psi_\mu(U) \right\}
+ \delta^2 \lambda |\partial \varphi(U)|_{\mathbb{L}^2}^2
+ \kappa |\partial \psi_\mu(U)|_{\mathbb{L}^2}^2\\
\notag
&+\delta^2 \kappa G
+ \lambda G_\mu
+(\delta^2 \beta - \alpha )B_\mu\\
\label{sfdsldflsdfsl}
&\leq \gamma_+ \left\{
p \delta^2 \varphi(U) + q \psi(U)
\right\}
+ (F, \delta^2 \partial \varphi(U) + \partial \psi_\mu(U))_{\mathbb{L}^2}.
\end{align}
Let $\epsilon \in (0, \min \{ \lambda, \kappa \})$ be a small parameter.
By the inequality of arithmetic and geometric means, and the fundamental property \eqref{consequence_from_orthogonality_2}, we have
\begin{align}
\notag
&\delta^2 \lambda |\partial \varphi(U)|_{\mathbb{L}^2}^2
+ \kappa |\partial \psi_\mu(U)|_{\mathbb{L}^2}^2\\
\notag
&=\epsilon
\left\{
\delta^2  |\partial \varphi(U)|_{\mathbb{L}^2}^2
+ |\partial \psi_\mu(U)|_{\mathbb{L}^2}^2
\right\}
+(\lambda- \epsilon) \delta^2 |\partial \varphi(U)|_{\mathbb{L}^2}^2
+(\kappa  - \epsilon) |\partial\psi_\mu(U)|_{\mathbb{L}^2}^2\\
\notag
&\geq \epsilon
\left\{
\delta^2  |\partial \varphi(U)|_{\mathbb{L}^2}^2
+ |\partial \psi_\mu(U)|_{\mathbb{L}^2}^2
\right\}
+2\sqrt{
(\lambda-\epsilon)(\kappa-\epsilon)\delta^2 |\partial\varphi(U)|_{\mathbb{L}^2}^2 |\partial\psi_\mu(U)|_{\mathbb{L}^2}^2
}\\
\label{fdfkskgkd}
&\geq
\epsilon
\left\{
\delta^2  |\partial \varphi(U)|_{\mathbb{L}^2}^2
+ |\partial \psi_\mu(U)|_{\mathbb{L}^2}^2
\right\}
+2\sqrt{
(\lambda-\epsilon)(\kappa-\epsilon)\delta^2 (G_\mu^2+B_\mu^2)
}.
\end{align}
We here recall the key inequality \eqref{key_inequality_2}
\begin{align}
\label{fadfks}
G \geq G_\mu \geq c_q^{-1}|B_\mu|.
\end{align}
Hence \eqref{sfdsldflsdfsl}, \eqref{fdfkskgkd} and \eqref{fadfks} yield
\begin{align}
\notag
&\frac{d}{dt} \left\{
\delta^2 \varphi(U) + \psi_\mu(U)
\right\}
+\epsilon \left\{
\delta^2  |\partial \varphi(U)|_{\mathbb{L}^2}^2
+ |\partial \psi_\mu(U)|_{\mathbb{L}^2}^2
\right\}
+ J(\delta, \epsilon)|B_\mu|\\
\label{sfdsldflsksfdhsl}
&\leq \gamma_+ \left\{
p \delta^2 \varphi(U) + q \psi(U)
\right\}
+ (F, \delta^2 \partial \varphi(U) + \partial \psi_\mu(U))_{\mathbb{L}^2},
\end{align}
where
\[
J(\delta, \epsilon)
:=2 \delta
\sqrt{(1+c_q^{-2})(\lambda-\epsilon)(\kappa-\epsilon)}
+ c_q^{-1}(\delta^2 \kappa+\lambda)
-|\delta^2 \beta - \alpha|.
\]

Now we are going to show that  $(\frac{\alpha}{\lambda}, \frac{\beta}{\kappa}) \in {\rm CGL}(c_q^{-1})$ assures $J(\delta, \epsilon) \geq 0$ for some $\delta$ and $\epsilon$.
By the continuity of $J(\delta, \cdot) : \epsilon \mapsto J(\delta, \epsilon)$, it suffices to show $J(\delta, 0) > 0$ for some $\delta$.
When $\alpha \beta >0$, it is enough to take $\delta=\sqrt{\alpha / \beta}$.
When $\alpha \beta \leq 0$, we have $|\delta^2 \beta - \alpha| =\delta^2 |\beta|+ |\alpha|$.
Hence
\[
J(\delta, 0)
= (c_q^{-1}\kappa -|\beta|)\delta^2
+2 \delta \sqrt{(1+c_q^{-2})\lambda \kappa}
+(c_q^{-1}\lambda-|\alpha|).
\]
Therefore if $|\beta|/ \kappa \leq c_q^{-1}$, we get $J(\delta, 0) > 0$ for sufficiently large $\delta >0$.
If $c_q^{-1} < |\beta| / \kappa$, we find that it is enough to see the discriminant is positive, i.e.,
\begin{equation}
D/4:=(1+c_q^{-2})\lambda \kappa
-(c_q^{-1}\kappa -|\beta|)(c_q^{-1}\lambda-|\alpha|)>0.
\end{equation}
Since
\[
D/4>0
\Leftrightarrow
\frac{|\alpha|}{\lambda}\frac{|\beta|}{\kappa}-1
<c_q^{-1}\left(
\frac{|\alpha|}{\lambda}+\frac{|\beta|}{\kappa}
\right),
\]
the condition $(\frac{\alpha}{\lambda}, \frac{\beta}{\kappa}) \in {\rm CGL}(c_q^{-1})$ yields $D>0$, whence $J(\delta, 0)>0$ for\\ $\delta = 2\sqrt{(1+c_q^{-2})\lambda\kappa}/(|\beta|-c_q^{-1}\kappa)>0$.

Now we take $\delta$ and $\epsilon$ such that $J(\delta, \epsilon) \geq 0$.
Integrating \eqref{sfdsldflsksfdhsl} and using Young's inequality and Lemma \ref{1st_energy_AE_mu}, we obtain
\begin{equation}
\label{fdsfdkfj}
\sup_{t \in [0,T]} \varphi(U(t))
+\int_0^T |\partial\varphi(U(s))|_{\mathbb{L}^2}^2 ds
+\int_0^T |\partial\psi_\mu(U(s))|_{\mathbb{L}^2}^2 ds
\leq C_2,
\end{equation}
where $C_2$ depends on the constants stated in Lemma \ref{2nd_energy_AE_mu}.
We multiply (AE)\(_\mu\) by $\partial \psi(U)$ to get by \eqref{orth:Ipsi}
\begin{align}
\notag
&\frac{d}{dt}\psi(U)
+\kappa |\partial \psi(U)|_{\mathbb{L}^2}^2
+\lambda (\partial \varphi(U), \partial \psi(U))_{\mathbb{L}^2}\\
\notag
&=
\begin{aligned}[t]
&- \alpha(I \partial \varphi(U), \partial \psi(U))_{\mathbb{L}^2}
-\beta (I \partial \psi_\mu(U), \partial \psi(U))_{\mathbb{L}^2}\\
&+ q \gamma \psi(U) + (F, \partial \psi(U))_{\mathbb{L}^2}
\end{aligned}\\
\label{adfhgtyyjhds}
&\leq
\frac{\kappa}{4}|\partial \psi(U)|_{\mathbb{L}^2}^2
+ \frac{\alpha^2}{\kappa}|\partial\varphi(U)|_{\mathbb{L}^2}^2
+ q \gamma_+ \psi(U)
+ \frac{\kappa}{4}|\partial \psi(U)|_{\mathbb{L}^2}^2
+ \frac{1}{\kappa}|F|_{\mathbb{L}^2}^2.
\end{align}

Hence \eqref{key_inequality_1}, \eqref{fdsfdkfj} and the integration of \eqref{adfhgtyyjhds} yield
\begin{equation}
\label{fdsfdfkj}
\int_0^T |\partial \psi(U(s))|_{\mathbb{L}^2}^2 ds
\leq C_2.
\end{equation}
Thus (AE)\(_\mu\) together with \eqref{fdsfdkfj} and \eqref{fdsfdfkj} gives the desired estimate \eqref{2nd_energy_AE_mu_1}.
\end{proof}

\begin{proof}
[Proof of Proposition \ref{main_result_bdd}]\quad
By Lemmas \ref{1st_energy_AE_mu} and \ref{2nd_energy_AE_mu}, we can deduce by Ascoli's theorem
\begin{alignat}{4}
U_{\mu_n} &\to U&&\quad\mbox{strongly in}\ {\rm C}([0, T];  \mathbb{L}^2(\Omega)),\\
\frac{dU_{\mu_n}}{dt} &\rightharpoonup \frac{dU}{dt}&&\quad\mbox{weakly in}\ {\rm L}^2(0, T;  \mathbb{L}^2(\Omega)),\\
\partial\varphi(U_{\mu_n}) &\rightharpoonup \partial\varphi(U)&&\quad\mbox{weakly in}\ {\rm L}^2(0, T;  \mathbb{L}^2(\Omega)),\\
\partial\psi(U_{\mu_n}) &\rightharpoonup \partial\psi(U)&&\quad\mbox{weakly in}\ {\rm L}^2(0, T;  \mathbb{L}^2(\Omega)),\\
\partial\psi_{\mu_n}(U_{\mu_n}) = \partial\psi(V_{\mu_n}) &\rightharpoonup g&&\quad\mbox{weakly in}\ {\rm L}^2(0, T;  \mathbb{L}^2(\Omega)),
\end{alignat}
for some \(g \in {\rm L}^2(0, T;  \mathbb{L}^2(\Omega))\).
Here we used the demiclosedness of \(\frac{d}{dt}, \partial\varphi, \partial\psi\).\vspace{1mm}

In order to ensure \(g = \partial\psi(U)\) it suffices to show \(V_{\mu_n} = (1 + \mu_n\partial\psi)^{-1}U_{\mu_n} \to U\) strongly in \({\rm L}^2(0,T; \mathbb{L}^2(\Omega))\) as \(\mu_n \to 0\).
Indeed we have
\[
\begin{aligned}
|V_{\mu_n} - U|_{{\rm L}^2(0, T;  \mathbb{L}^2(\Omega))}
&\leq |V_{\mu_n} - U_{\mu_n}|_{{\rm L}^2(0, T;  \mathbb{L}^2(\Omega))} + |U_{\mu_n} - U|_{{\rm L}^2(0, T;  \mathbb{L}^2(\Omega))}\\
&= \mu_n|\partial\psi(U_{\mu_n})|_{{\rm L}^2(0, T;  \mathbb{L}^2(\Omega))} + |U_{\mu_n} - U|_{{\rm L}^2(0, T;  \mathbb{L}^2(\Omega))} \to 0,
\end{aligned}
\]
as \(\mu_n \to 0\).
\end{proof}

\section{Proof of Theorem \ref{main_result_1}}
In this section, we prove Theorem \ref{main_result_1} with the aid of Proposition \ref{main_result_bdd}.

Let \(\{U^k_0\}_{k \in \mathbb{N}} \subset \mathbb{V}_p\) such that \(U^k_0 \to U_0\) in \(\mathbb{V}_p\) and \(\mathop{{\rm supp}}U^k_0 \subset \Omega_k\), where \(\Omega_k \subset \Omega\) satisfies (i) and (ii).
Let \(U^k = U\) be solutions of (ACGL)\(_p\) with \(\Omega = \Omega_k\) corresponding to initial data \(U^k_0\) given by Proposition \ref{main_result_bdd}.
Here we can assume without loss of generality that for all \(k \in \mathbb{N}\)
\begin{align}
\label{bddUk0}
|U^k_0|_{\mathbb{L}^2(\Omega_k)} &\leq |U_0|_{\mathbb{L}^2(\Omega)} + 1,\\
\label{bddphiUk0}
\varphi(U^k_0) &\leq \varphi(U_0) + 1,\\
\label{bddpsiUk0}
\psi(U^k_0) &\leq \psi(U_0) + 1.
\end{align}
Then repeating much the same arguments as before, we can deduce a priori estimates similar to those in Lemmas \ref{1st_energy_AE_mu} and \ref{2nd_energy_AE_mu}:
\begin{Lem}
\label{1st_energy_ACGL_k}
Let $U^k$ be a solution of (ACGL)\(_p\) with \(\Omega = \Omega_k\) and initial data \(U^k_0\).
Then there exists a positive constant $C_1$ depending only on $\gamma$, $T$, $|U_0|_{\mathbb{L}^2(\Omega)}$ and $\int_0^T|F|_{\mathbb{L}^2(\Omega)}^2 dt$ satisfying
\begin{align}\label{1st_energy_ACGL_k_1}
\sup_{t \in [0,T]}|U^k(t)|_{\mathbb{L}^2(\Omega_k)}^2
+\int_0^T \varphi(U^k(s)) ds
+\int_0^T \psi(U^k(s)) ds
\leq C_1.
\end{align}
\end{Lem}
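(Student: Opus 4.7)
The plan is to obtain the desired $k$-independent estimate by performing the standard $\mathbb{L}^2$-energy test on (ACGL)$_p$ posed on $\Omega_k$. I would take the inner product of the equation with the solution $U^k(t)$ in $\mathbb{L}^2(\Omega_k)$. The decisive simplification is that, by the skew-symmetry identities in \eqref{orth:IU}, both $(I\partial\varphi(U^k), U^k)_{\mathbb{L}^2} = 0$ and $(I\partial\psi(U^k), U^k)_{\mathbb{L}^2} = 0$, so that the rotation parameters $\alpha$ and $\beta$ drop out at this first-order level. Moreover, by the positive homogeneity of $\varphi$ and $\psi$ of degrees $p$ and $q$ respectively, together with \eqref{adfjdhss}, we have
\[
(\partial\varphi(U^k), U^k)_{\mathbb{L}^2} = p\,\varphi(U^k), \qquad (\partial\psi(U^k), U^k)_{\mathbb{L}^2} = q\,\psi(U^k).
\]

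Combining these observations with Young's inequality applied to $(F, U^k)_{\mathbb{L}^2}$ yields the differential inequality
\[
\frac{1}{2}\frac{d}{dt}|U^k(t)|_{\mathbb{L}^2}^2 + p\lambda\,\varphi(U^k(t)) + q\kappa\,\psi(U^k(t)) \leq \Bigl(\gamma + \tfrac{1}{2}\Bigr)|U^k(t)|_{\mathbb{L}^2}^2 + \tfrac{1}{2}|F(t)|_{\mathbb{L}^2}^2.
\]
A direct application of Gronwall's lemma then controls $\sup_{t \in [0,T]}|U^k(t)|_{\mathbb{L}^2(\Omega_k)}^2$, and integrating the resulting inequality on $(0,T)$ extracts the $\mathbb{L}^1(0,T)$ bounds on $\varphi(U^k(\cdot))$ and $\psi(U^k(\cdot))$.

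The $k$-independence of the constant $C_1$ is guaranteed by two elementary observations. First, the initial data satisfy $|U^k_0|_{\mathbb{L}^2(\Omega_k)} \leq |U_0|_{\mathbb{L}^2(\Omega)} + 1$ uniformly in $k$ by \eqref{bddUk0}. Second, since $F \in {\rm L}^2(0,T;\mathbb{L}^2(\Omega))$ and $\Omega_k \subset \Omega$, the restriction obeys $\int_0^T |F|_{\mathbb{L}^2(\Omega_k)}^2\,dt \leq \int_0^T |F|_{\mathbb{L}^2(\Omega)}^2\,dt$. Consequently, the Gronwall-type constant depends only on $\gamma$, $T$, $|U_0|_{\mathbb{L}^2(\Omega)}$ and $\int_0^T |F|_{\mathbb{L}^2(\Omega)}^2\,dt$, as asserted.

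There is no substantive obstacle here: the argument is a direct transcription of the proofs of Lemmas \ref{1st_energy_AE_nu}, \ref{1st_energy_AE_nu2} and \ref{1st_energy_AE_mu} to the sequence $\{\Omega_k\}$. The only point requiring a brief comment is to verify that the constants produced by Gronwall's lemma and Young's inequality can indeed be chosen to depend only on the quantities stated in the lemma, which follows from the uniform bound \eqref{bddUk0} and the monotonicity of the $\mathbb{L}^2$-norm with respect to the expanding domains.
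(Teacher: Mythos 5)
Your proof is correct and follows essentially the same route as the paper, which simply invokes the argument of Lemma \ref{1st_energy_AE_nu} together with the orthogonality relations \eqref{orth:IU}: multiply by $U^k$, let the $\alpha$ and $\beta$ terms vanish, use the homogeneity identities $(\partial\varphi(U^k),U^k)_{\mathbb{L}^2}=p\varphi(U^k)$ and $(\partial\psi(U^k),U^k)_{\mathbb{L}^2}=q\psi(U^k)$, and close with Young and Gronwall. Your added remark on the $k$-uniformity of the constant via \eqref{bddUk0} and the monotonicity of the norms over $\Omega_k\subset\Omega$ is exactly the point that makes the constant depend only on the quantities listed in the statement.
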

\begin{proof}
The same argument as in the proof of Lemma \ref{1st_energy_AE_nu} and \eqref{orth:IU} yields \eqref{1st_energy_ACGL_k_1}.
\end{proof}
\begin{Lem}
\label{2nd_energy_ACGL_k}
Let $U^k$ be a solution of (ACGL)\(_p\) with \(\Omega = \Omega_k\) and initial data \(U^k_0\), and let $\left(\frac{\alpha}{\lambda}, \frac{\beta}{\kappa}\right) \in {\rm CGL}(c_q^{-1})$.
Then for a fixed $T>0$, there exists a positive constant $C_2$ depending only on $\lambda, \kappa, \alpha, \beta, \gamma$, $T,\varphi(U_0), \psi(U_0)$, $|U_0|_{\mathbb{L}^2(\Omega)}$ and $\int_0^T|F|_{\mathbb{L}^2(\Omega)}^2 dt$ satisfying
\begin{align}
\notag
&\sup_{t \in [0,T]}\varphi(U^k(t))
+ \sup_{t \in [0,T]}\psi(U^k(t))+ \int_0^T \left| \frac{dU^k(t)}{dt}\right|_{\mathbb{L}^2(\Omega_k)}^2 dt\\
\label{2nd_energy_ACGL_k_1}
&
+ \int_0^T |\partial \varphi(U^k(t))|_{\mathbb{L}^2(\Omega_k)}^2 dt
+ \int_0^T |\partial \psi(U^k(t))|_{\mathbb{L}^2(\Omega_k)}^ 2dt
\leq C_2.
\end{align}
\end{Lem}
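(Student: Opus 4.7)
The plan is to mimic the proof of Lemma \ref{2nd_energy_AE_mu} almost verbatim, with two simplifications and one essential bookkeeping modification. Since each $U^k$ is a bona fide solution of (ACGL)$_p$ on $\Omega_k$ produced by Proposition \ref{main_result_bdd}, both $\partial\varphi(U^k)$ and $\partial\psi(U^k)$ already lie in ${\rm L}^2(0,T;\mathbb{L}^2(\Omega_k))$, so the Yosida approximation $\partial\psi_\mu$ used in Lemma \ref{2nd_energy_AE_mu} is unnecessary: we may multiply directly by $\partial\varphi(U^k)$ and $\partial\psi(U^k)$ and invoke the sharper key inequality \eqref{key_inequality_1} in place of \eqref{key_inequality_2}. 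The bookkeeping modification is that every constant must be tracked to be independent of $k$, which is ensured by the uniform initial bounds \eqref{bddUk0}--\eqref{bddpsiUk0} together with the trivial domain monotonicity $|F|_{\mathbb{L}^2(\Omega_k)} \leq |F|_{\mathbb{L}^2(\Omega)}$.

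Concretely, I would take the $\mathbb{L}^2(\Omega_k)$-inner product of (ACGL)$_p$ with $\partial\varphi(U^k)$ and with $\partial\psi(U^k)$ separately, killing the diagonal terms $\alpha(I\partial\varphi,\partial\varphi)_{\mathbb{L}^2}$ and $\beta(I\partial\psi,\partial\psi)_{\mathbb{L}^2}$ via \eqref{skew-symmetric_property}, and using $(U^k,\partial\varphi(U^k))_{\mathbb{L}^2} = p\varphi(U^k)$ and $(U^k,\partial\psi(U^k))_{\mathbb{L}^2} = q\psi(U^k)$. Writing $G := (\partial\varphi(U^k),\partial\psi(U^k))_{\mathbb{L}^2}$ and $B := (\partial\varphi(U^k),I\partial\psi(U^k))_{\mathbb{L}^2}$, the two resulting energy identities have exactly the form obtained in Lemma \ref{2nd_energy_AE_mu} with $(G_\mu,B_\mu)$ replaced by $(G,B)$. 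Forming $\delta^2\times(\text{first}) + (\text{second})$ yields a differential inequality for $\delta^2\varphi(U^k) + \psi(U^k)$; I would then split the coercive part $\delta^2\lambda|\partial\varphi(U^k)|_{\mathbb{L}^2}^2 + \kappa|\partial\psi(U^k)|_{\mathbb{L}^2}^2$ by the arithmetic-geometric mean, apply Bessel's inequality \eqref{consequence_from_orthogonality_2} in the form $|\partial\varphi(U^k)|_{\mathbb{L}^2}^2|\partial\psi(U^k)|_{\mathbb{L}^2}^2 \geq G^2 + B^2$, and use \eqref{key_inequality_1} in the form $G \geq c_q^{-1}|B|$ to absorb the mixed term $(\delta^2\beta - \alpha)B$.

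The problem thereby reduces to producing $\delta > 0$ and $\epsilon \in (0,\min\{\lambda,\kappa\})$ with
\[
J(\delta,\epsilon) := 2\delta\sqrt{(1+c_q^{-2})(\lambda-\epsilon)(\kappa-\epsilon)} + c_q^{-1}(\delta^2\kappa + \lambda) - |\delta^2\beta - \alpha| \geq 0,
\]
and the hypothesis $(\alpha/\lambda,\beta/\kappa) \in {\rm CGL}(c_q^{-1})$ supplies such a pair through the four-case analysis on $S_1,\dots,S_4$ already carried out in Lemma \ref{2nd_energy_AE_mu}; this algebraic verification is the only delicate point and transfers without change. With $J(\delta,\epsilon) \geq 0$ in hand, Young's inequality on the $F$-terms, Gronwall, and \eqref{bddUk0}--\eqref{bddpsiUk0} deliver a $k$-independent bound on $\sup_t[\delta^2\varphi(U^k(t)) + \psi(U^k(t))]$ and on $\int_0^T(|\partial\varphi(U^k)|_{\mathbb{L}^2}^2 + |\partial\psi(U^k)|_{\mathbb{L}^2}^2)\,dt$; since $\varphi,\psi \geq 0$, this controls the first three terms of \eqref{2nd_energy_ACGL_k_1}, and the bound on $\int_0^T|dU^k/dt|_{\mathbb{L}^2}^2\,dt$ follows by reading $dU^k/dt$ off (ACGL)$_p$ as the sum of quantities already controlled (with Lemma \ref{1st_energy_ACGL_k} handling the $|U^k|_{\mathbb{L}^2}$ contribution).
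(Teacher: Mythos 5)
Your proposal is correct and follows essentially the same route as the paper, which itself disposes of this lemma by saying "repeat the same arguments as in the proof of Lemma \ref{2nd_energy_AE_mu} with \(I\partial\psi_\mu(U)\) replaced by \(I\partial\psi(U)\)"; your observation that the Yosida approximation is now superfluous (so that \eqref{key_inequality_1} replaces \eqref{key_inequality_2}) and your tracking of \(k\)-independence via \eqref{bddUk0}--\eqref{bddpsiUk0} are exactly the intended adaptations.
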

\begin{proof}
We can repeat the same arguments as in the proof of Lemma \ref{2nd_energy_AE_mu} with \(I\partial\psi_\mu(U)\) replaced by \(I\partial\psi(U)\).
\end{proof}

In what follows, we denote by \(\tilde{w}\) or \([w]^\sim\) the zero extension of \(w \in \mathbb{L}^2(\Omega_k)\) to \(\mathbb{L}^2(\Omega)\), i.e.,
\[
\tilde{w}(x) = [w]^\sim(x)=
\left\{
\begin{aligned}
&w(x)&&\mbox{if}\ x \in \Omega_k,\\
&0&&\mbox{if}\ x \in \Omega\setminus\Omega_k.
\end{aligned}
\right.
\]
Then we note that
\[
\left[\frac{d}{dt}U^k(t,x)\right]^\sim = \frac{d}{dt}\tilde{U}^k(t,x),\quad\left[\partial\psi(U^k(t,x))\right]^\sim=\partial\psi(\tilde{U}^k(t,x)).
\]
Therefore, by Lemmas \ref{1st_energy_ACGL_k} and \ref{2nd_energy_ACGL_k}, there exists a subsequence \(\{\tilde{U}^{k_n}\}\) of \(\{\tilde{U}^k\}\) satisfying
\begin{alignat}{4}
\label{Uknw}
\tilde{U}^{k_n} &\rightharpoonup U&&\quad\mbox{weakly in}\ {\rm L}^2(0, T;  \mathbb{L}^2(\Omega)),\\
\tag*{(\arabic{section}.\arabic{equation})\('\)}\tilde{U}^{k_n}(T) &\rightharpoonup U(T)&&\quad\mbox{weakly in}\ \mathbb{L}^2(\Omega),\\
\frac{d\tilde{U}^{k_n}}{dt} &\rightharpoonup \frac{dU}{dt}&&\quad\mbox{weakly in}\ {\rm L}^2(0, T;  \mathbb{L}^2(\Omega)),\\
\label{pphiUknw}
\left[\partial\varphi(U^{k_n})\right]^\sim &\rightharpoonup h&&\quad\mbox{weakly in}\ {\rm L}^2(0, T;  \mathbb{L}^2(\Omega)),\\
\partial\psi(\tilde{U}^{k_n}) &\rightharpoonup g&&\quad\mbox{weakly in}\ {\rm L}^2(0, T;  \mathbb{L}^2(\Omega)),
\end{alignat}
for some \(h, g \in {\rm L}^2(0, T; \mathbb{L}^2(\Omega))\).
Hence we get
\begin{equation}\label{6.10}
\frac{d}{dt}U+\lambda h+\kappa g+\alpha I h + \beta I g - \gamma U = F.
\end{equation}

In the sequel, we are going to confirm that \(h = \partial\varphi(U)\) and \(g = \partial\psi(U)\).

In order to show \(g = \partial\psi(U)\), we follow the strategy given in \cite{KOS1}, i.e., we rely on Ascoli's theorem and the diagonal argument.
To do this, we first note that for any \(l \in \mathbb{N}\) \eqref{2nd_energy_ACGL_k_1} assures
\begin{equation}
\label{equicontinuity_E_mu}
|\tilde{U}^{k_n}(t_2)|_{\Omega_l}-\tilde{U}^{k_n}(t_1)|_{\Omega_l}|_{\mathbb{L}^2(\Omega_l)}
\leq
\int_{t_1}^{t_2}
\left|\frac{d\tilde{U}^{k_n}}{ds}\right|_{\mathbb{L}^2(\Omega)}\hspace{-3mm}ds
\leq \sqrt{C_2} \sqrt{t_2-t_1},
\end{equation}
which implies that \(\{\tilde{U}^{k_n}|_{\Omega_l}\}_{k_n\geq l}\) forms an equicontinuous family in \({\rm C}([0,T]; \mathbb{L}^2(\Omega_l))\) for any \(l \in \mathbb{N}\).
Furthermore, since \eqref{1st_energy_ACGL_k_1} and \eqref{2nd_energy_ACGL_k_1} ensures that \(|\nabla\tilde{U}^{k_n}|_{\Omega_l}|_{\mathbb{L}^p(\Omega_l)}\) and \(|\tilde{U}^{k_n}|_{\Omega_l}|_{\mathbb{L}^2(\Omega_l)}\) are bounded and \(\{\tilde{U}^{k_n}(t)|_{\Omega_l}\}_{k_n\geq l}\) forms a precompact set in \(\mathbb{L}^2(\Omega_l)\).
Hence, by Ascoli's theorem, there exists a subsequence \(\{k_n^1\}\) of \(\{k_n\}\) such that
\[
\tilde{U}^{k_n^1}|_{\Omega_1} \rightarrow U^1\quad\mbox{strongly in}\ {\rm C}([0,T];\mathbb{L}^2(\Omega_1))\quad\mbox{as}\ n \to \infty.
\]
Moreover there exists a subsequence \(\{k_n^2\}\) of \(\{k_n^1\}\) such that
\[
\tilde{U}^{k_n^2}|_{\Omega_2} \rightarrow U^1\quad\mbox{strongly in}\ {\rm C}([0,T];\mathbb{L}^2(\Omega_2))\quad\mbox{as}\ n \to \infty.
\]
Successively we can choose sequences \(\{k_n^{l+1}\}\) of \(\{k_n^l\}\) such that
\[
\begin{aligned}
&\{k_n^1\}_{n \in \mathbb{N}} \supset \{k_n^2\}_{n \in \mathbb{N}} \supset \cdots \supset \{k_n^l\}_{n \in \mathbb{N}} \supset \{k_n^{l+1}\}_{n \in \mathbb{N}} \supset \cdots\\
&\tilde{U}^{k_n^l}|_{\Omega_l} \to U^l\quad\mbox{strongly in}\ {\rm C}([0, T]; \mathbb{L}^2(\Omega_l))\quad\mbox{as}\ n \to \infty.
\end{aligned}
\]

Now we take the diagonal sequence \(\{k_n'\}_{n\in\mathbb{N}} := \{k_n^n\}_{n\in\mathbb{N}}\).
Then we get
\begin{equation}
\label{dfhjdfhjdf}
\tilde{U}^{k'_n}|_{\Omega_l} \rightarrow U^l
\quad\mbox{strongly in}\ {\rm C}([0,T]; \mathbb{L}^2(\Omega_l))
\quad\mbox{as}\ n \rightarrow \infty
\quad\forall l \in \mathbb{N}.
\end{equation}
On the other hand, by \eqref{Uknw}, we find that
\begin{equation}
\label{dfhjdfhjdfg}
\tilde{U}^{k'_n}|_{\Omega_l} \rightharpoonup U|_{\Omega_l}
\quad\mbox{weakly in}\ {\rm L}^2(0,T; \mathbb{L}^2(\Omega_l))
\quad\mbox{as}\ n \rightarrow \infty
\quad\forall l \in \mathbb{N}.
\end{equation}
Thus, by \eqref{dfhjdfhjdf} and \eqref{dfhjdfhjdfg}, we find that \(U^l = U|_{\Omega_l}\ \forall l \in \mathbb{N}\) and
\begin{equation}
\label{dfhjdfhjdfh}
\tilde{U}^{k'_n}|_{\Omega_l} \rightarrow U|_{\Omega_l}
\quad\mbox{strongly in}\ {\rm C}([0,T]; \mathbb{L}^2(\Omega_l))
\quad\mbox{as}\ n \rightarrow \infty
\quad\forall l \in \mathbb{N}.
\end{equation}
Here, by virtue of the demiclosedness of the operator \(U \mapsto \partial\psi(U) = |U|^{q-2}U\) in \({\rm L}^2(0,T; \mathbb{L}^2(\Omega_l))\) for any \(l \in \mathbb{N}\), we can conclude
\[
g(t,x)|_{\Omega_l} = \partial\psi(U(t,x)|_{\Omega_l})\quad\forall l \in \mathbb{N},
\]
whence follows
\begin{equation}\label{6.15}
g(t,x) = \partial\psi(U(t,x))\quad \mbox{a.e.}\ (t,x) \in (0,T)\times\Omega.
\end{equation}

Next we ensure that \(h = \partial\varphi(U)\).

Let \(V\) be an arbitrary element of \({\rm C}([0,T];\mathbb{C}_0^1(\Omega))\), then there exists \(n_0\) such that \(\mathrel{\rm supp}V(t)\) is contained in \(\Omega_{n_0}\) for all \(t \in [0,T]\).
Since \(U^{k'_n}\) is a solution of (ACGL)\(_p\) with \(\Omega=\Omega_n\), from the definition of \(\partial\varphi\), we get
\begin{align}
\label{subdiffphi}
(\left[\partial\varphi(U^{k'_n})\right]^\sim + \tilde{U}^{k'_n}, V - \tilde{U}^{k'_n})_{\mathbb{L}^2(\Omega)} \leq \varphi(V) + \frac{1}{2}|V|_{\mathbb{L}^2(\Omega)}^2 - \varphi(\tilde{U}^{k'_n}) - \frac{1}{2}|\tilde{U}^{k'_n}|_{\mathbb{L}^2(\Omega)}^2&\\
\notag\mbox{for all}\ n \geq n_0.&
\end{align}
Multiplying \eqref{subdiffphi} by \(e^{-2t(\gamma + \lambda)}\) and integrating on \((0, T)\), we obtain
\[
\begin{aligned}
&\int_0^Te^{-2t(\gamma + \lambda)}(\left[\partial\varphi(U^{k'_n}(t))\right]^\sim + \tilde{U}^{k'_n}, V)_{\mathbb{L}^2(\Omega)}dt\\
&- \int_0^Te^{-2t(\gamma + \lambda)}(\left[\partial\varphi(U^{k'_n}(t))\right]^\sim + \tilde{U}^{k'_n}, \tilde{U}^{k'_n}(t))_{\mathbb{L}^2(\Omega)}dt\\
&\leq 
\begin{aligned}[t]
&\int_0^Te^{-2t(\gamma + \lambda)}\left\{\varphi(V)
+ \frac{1}{2}|V|_{\mathbb{L}^2(\Omega)}^2\right\}dt\\
& - \int_0^Te^{-2t(\gamma + \lambda)}\left\{\varphi(\tilde{U}^{k'_n}(t))
+\frac{1}{2}|\tilde{U}^{k'_n}|_{\mathbb{L}^2(\Omega)}^2\right\}dt.
\end{aligned}
\end{aligned}
\]

By \eqref{Uknw} and \eqref{pphiUknw}, we have
\begin{align}
\notag
\int_0^Te^{-2t(\gamma + \lambda)}(\left[\partial\varphi(U^{k'_n}(t))\right]^\sim+ \tilde{U}^{k'_n}, V)_{\mathbb{L}^2(\Omega)}dt
&= (\left[\partial\varphi(U^{k'_n})\right]^\sim + \tilde{U}^{k'_n}, e^{-2t(\gamma + \lambda)}V)_{{\rm L}^2(0, T; \mathbb{L}^2(\Omega))}\\
\label{wlim}
&\to (h + U, e^{-2t(\gamma + \lambda)}V)_{{\rm L}^2(0, T; \mathbb{L}^2(\Omega))}\ \mbox{as}\ n \to \infty.
\end{align}
On the other hand, we have
\[
\partial\varphi(U^{k_n'}(t)) 
= \frac{1}{\lambda}\left[-\frac{d}{dt}U^{k_n'}
-\kappa\partial\psi(U^{k_n'})
-\alpha I \partial\varphi(U^{k_n'})
-\beta I\partial\psi(U^{k_n'})
+\gamma U^{k_n'}+F\right].
\]
Hence it holds that
\begin{align}
\notag
&-\int_0^Te^{-2t(\gamma + \lambda)}(\left[\partial\varphi(U^{k'_n}(t))\right]^\sim + \tilde{U}^{k'_n}, \tilde{U}^{k'_n}(t))_{\mathbb{L}^2(\Omega)}dt\\
\notag
&=
\begin{aligned}[t]
&\frac{1}{2\lambda}\int_0^Te^{-2t(\gamma + \lambda)}\frac{d}{dt}|\tilde{U}^{k'_n}|_{\mathbb{L}^2(\Omega)}^2dt 
+ \frac{q\kappa}{\lambda}\int_0^Te^{-2t(\gamma + \lambda)}\psi(\tilde{U}^{k'_n}(t))dt\\
&- \left(\frac{\gamma}{\lambda} + 1\right)\int_0^Te^{-2t(\gamma + \lambda)}|\tilde{U}^{k'_n}(t)|_{\mathbb{L}^2(\Omega)}^2dt
-\frac{1}{\lambda}\int_0^Te^{-2t(\gamma+\lambda)}(F,\tilde{U}^{k_n'})_{\mathbb{L}^2(\Omega)}dt
\end{aligned}\\
\notag
&=
\begin{aligned}[t]
&\frac{1}{2\lambda}\int_0^T\left[
\frac{d}{dt}e^{-2t(\gamma + \lambda)}|\tilde{U}^{k'_n}|_{\mathbb{L}^2(\Omega)}^2\right]dt\\
&+ \frac{q\kappa}{\lambda}\int_0^Te^{-2t(\gamma + \lambda)}\psi(\tilde{U}^{k'_n}(t))dt
-\frac{1}{\lambda}\int_0^Te^{-2t(\gamma+\lambda)}(F,\tilde{U}^{k_n'})_{\mathbb{L}^2(\Omega)}dt
\end{aligned}\\
\label{lhssubdiffphi}
&=
\begin{aligned}[t]
&\frac{1}{2\lambda}e^{-2T(\gamma + \lambda)}|\tilde{U}^{k'_n}(T)|_{\mathbb{L}^2(\Omega)}^2
- \frac{1}{2\lambda}|\tilde{U}^{k'_n}_0|_{\mathbb{L}^2(\Omega)}^2\\
&+ \frac{q\kappa}{\lambda}\int_0^Te^{-2t(\gamma + \lambda)}\psi(\tilde{U}^{k'_n}(t))dt
-\frac{1}{\lambda}\int_0^Te^{-2t(\gamma+\lambda)}(F,\tilde{U}^{k_n'})_{\mathbb{L}^2(\Omega)}dt
\end{aligned}
\end{align}

By the assumption, it holds that
\begin{equation}
\label{sconvU0}
|U^{k'_n}_0|_{\mathbb{L}^2(\Omega)} \to |U_0|_{\mathbb{L}^2(\Omega)}\quad\mbox{as}\ k'_n \to \infty.
\end{equation}
Moreover, \(0 < \min\{1, e^{-2t(\gamma + \lambda)}\} \leq e^{-2t(\gamma + \lambda)} \leq \max\{1, e^{-2t(\gamma + \lambda)}\}\) implies the multiplier \(e^{-2t(\gamma + \lambda)}\) maintains the norm equivalent to that of \({\rm L}^2(0, T; \mathbb{L}^2(\Omega))\).
Therefore by the weak lower semi-continuity of norms, we have
\begin{align}
\notag
&\limsup_{n \to \infty}\left[-\int_0^Te^{-2t(\gamma + \lambda)}\left\{\varphi(\tilde{U}^{k'_n}(t))+\frac{1}{2}|\tilde{U}^{k'_n}|_{\mathbb{L}^2(\Omega)}^2\right\}dt\right]\\
\notag
&= - \liminf_{n \to \infty}\int_0^Te^{-2t(\gamma + \lambda)}\left\{\varphi(\tilde{U}^{k'_n}(t))+\frac{1}{2}|\tilde{U}^{k'_n}|_{\mathbb{L}^2(\Omega)}^2\right\}dt\\
\label{lscphi}
&\leq - \int_0^Te^{-2t(\gamma + \lambda)}\left\{\varphi(U(t))+\frac{1}{2}|U|_{\mathbb{L}^2(\Omega)}^2\right\}dt,
\end{align}
\begin{align}
\liminf_{n \to \infty}\int_0^Te^{-2t(\gamma + \lambda)}\psi(\tilde{U}^{k'_n}(t))dt
\label{lscpsi}
\geq \int_0^Te^{-2t(\gamma + \lambda)}\psi(U(t))dt
\end{align}
and
\begin{equation}\label{6.22}
\liminf_{n \to \infty}|\tilde{U}^{k'_n}(T)|_{\mathbb{L}^2(\Omega)} 
\geq |U(T)|_{\mathbb{L}^2(\Omega)}.
\end{equation}
Thus, in view of above relations \eqref{6.22}-\eqref{subdiffphi},
\begin{equation}\label{lkjhgf}
\begin{aligned}[b]
&\int_0^T(h+U,e^{-2t(\gamma+\lambda)}V)_{\mathbb{L}^2(\Omega)}dt
+\frac{1}{2\lambda}e^{-2T(\gamma+\lambda)}|U(T)|_{\mathbb{L}^2(\Omega)}^2
-\frac{1}{2\lambda}|U_0|_{\mathbb{L}^2(\Omega)}^2\\
&\quad
+\frac{q\kappa}{\lambda}\int_0^Te^{-2t(\gamma+\lambda)}\psi(U(t))dt
-\frac{1}{\lambda}\int_0^Te^{-2t(\gamma+\lambda)}(F,U)_{\mathbb{L}^2(\Omega)}dt\\
&\leq
\begin{aligned}[t]
&\int_0^Te^{-2t(\gamma+\lambda)}\left\{\varphi(V)+\frac{1}{2}|V|_{\mathbb{L}^2(\Omega)}^2\right\}dt\\
&-\int_0^Te^{-2t(\gamma+\lambda)}\left\{\varphi(U)+\frac{1}{2}|U(t)|_{\mathbb{L}^2(\Omega)}^2\right\}dt.
\end{aligned}
\end{aligned}
\end{equation}

Since we already have \(g = \partial\psi(U)\), it holds that
\begin{equation}\label{lkjhgfd}
(Ig, U)_{\mathbb{L}^2(\Omega)} = 0\quad\mbox{for a.e.}\ t \in (0,T).
\end{equation}

Here we claim that it also holds that
\begin{equation}
\label{IfU0}
(Ih, U)_{\mathbb{L}^2(\Omega)} = 0\quad\mbox{for a.e.}\ t \in (0,T).
\end{equation}
To show \eqref{IfU0}, we use an truncation function \(\eta \in {\rm C}_0^1(\mathbb{R}^N)\), \(0 \leq \eta \leq 1\) such that
\[
\eta(x) =
\left\{
\begin{aligned}
&1&&\mbox{if}\ |x| \leq 1/2,\\
&0&&\mbox{if}\ |x| \geq1,
\end{aligned}
\right.
\]
and define \(\eta_R(x) := \eta(x/R)\).
Then \(\mathop{{\rm supp}}\eta_R \subset {\rm B}_R := \{z;  |z|_{\mathbb{R}^N} \leq R\}\) and \(\eta_R\) satisfies
\[
|\nabla\eta_R|_\infty \leq \frac{|\nabla \eta|_\infty}{R},
\]
where \(|w|_\infty = \mathop{{\rm esssup}}_{x \in \mathbb{R}^n}|w(x)|\) for \(w \in {\rm L}^\infty(\mathbb{R}^N)\).

Multiplying \(I\partial\varphi(U^{k'_n})\) by \(\eta_RU^{k'_n}\) and applying integration by parts, we obtain by \eqref{skew-symmetric_property}
\begin{align}
\notag
(I\left[\partial\varphi(U^{k'_n})\right]^\sim, \eta_R\tilde{U}^{k'_n})_{\mathbb{L}^2(\Omega)}
&= 
(-I\nabla(|\nabla \tilde{U}^{k'_n}|^{p-2}\nabla \tilde{U}^{k'_n}), \eta_R\tilde{U}^{k'_n})_{\mathbb{L}^2(\Omega)}\\
\notag
&=
\begin{aligned}[t]
&(I(|\nabla \tilde{U}^{k'_n}|^{p-2}\nabla \tilde{U}^{k'_n}), \eta_R\nabla \tilde{U}^{k'_n})_{(\mathbb{L}^2(\Omega))^{2N}}\\
& + (I(|\nabla \tilde{U}^{k'_n}|^{p-2}\nabla \tilde{U}^{k'_n}), \nabla \eta_R\tilde{U}^{k'_n})_{(\mathbb{L}^2(\Omega))^{2N}}
\end{aligned}\\
\label{trunc}
&= (I(|\nabla \tilde{U}^{k'_n}|^{p-2}\nabla \tilde{U}^{k'_n}), \nabla\eta_R\tilde{U}^{k'_n})_{(\mathbb{L}^2(\Omega))^{2N}}.
%%&= (I(|\nabla \tilde{U}^{k'_n}|^{p-2}\nabla \tilde{U}^{k'_n}), \nabla\eta_R\tilde{U}^{k'_n})_{\bigl(\mathbb{L}^2({\rm B}_R)\bigr)^{2N}}.
\end{align}

We first consider the case \(p > 2\).
Then by H\"older's inequality, we get
\begin{equation}
\label{innerHol}
\left|(\left[I\partial\varphi(U^{k_n'})\right]^\sim,\eta_R\tilde{U}^{k_n'})_{\mathbb{L}^2(\Omega)}\right|
\leq
|\nabla\tilde{U}^{k_n'}|_{\mathbb{L}^p(\Omega)}^{p-1}
|\tilde{U}^{k_n'}|_{\mathbb{L}^p({\rm B}_R)}|\nabla\eta_R|_\infty.
\end{equation}
Hence, if \(p < N\), we can apply Gagliardo-Nirenberg-type interpolation theorem to have
\begin{equation}
\label{GNtype}
|\tilde{U}^{k_n'}|_{\mathbb{L}^p(\Omega)} \leq C|\nabla \tilde{U}^{k_n'}|_{\mathbb{L}^p(\Omega)}^{1-\theta}|\tilde{U}^{k_n'}|_{\mathbb{L}^2(\Omega)}^\theta,
\end{equation}
with \(\frac{1}{p} = \theta\left(\frac{1}{p} - \frac{1}{N}\right) + (1-\theta)\frac{1}{2}\), i.e., \(\theta = \frac{N(p-2)}{Np+2p-2N}\).
We note that \(\theta \in (0,1)\), since \(p > 2N/(N+2)\).

Then by \eqref{innerHol} and \eqref{GNtype}, we obtain
\begin{equation}
\label{innerineq}
\left|(\left[I\partial\varphi(U^{k_n'})\right]^\sim,\eta_R\tilde{U}^{k_n'})_{\mathbb{L}^2(\Omega)}\right|
\leq
C|\nabla\tilde{U}^{k_n'}|_{\mathbb{L}^p(\Omega)}^{p-\theta}
|\tilde{U}^{k_n'}|_{\mathbb{L}^2(\Omega)}|\nabla\eta|_\infty\frac{1}{R}.
\end{equation}

As for the case \(p\geq N\), we need more delicate arguments.
Let \(\Phi_R\) be a mapping from \({\rm B}_R\) onto \({\rm B}_1\) given by \(\Phi_R: x \mapsto y = x/R\) and for any \(U \in \mathbb{L}^r({\rm B}_R)\), we define \(U_R \in \mathbb{L}^r({\rm B}_1)\) by
\[
U_R(y) = U(Ry)\quad\forall y \in {\rm B}_1.
\]
Then we easily have
\begin{align}
\label{poiu}
\begin{aligned}[t]
|U|_{\mathbb{L}^r({\rm B}_R)} &= 
\left(\int_{{\rm B}_R}|U(x)|^rdx\right)^{1/r}\\
&=\left(\int_{{\rm B}_1}|U_R(y)|^rR^Ndy\right)^{1/r}
=R^{\frac{N}{r}}|U_R|_{\mathbb{L}^r({\rm B}_1)},
\end{aligned}\\
\label{poiuy}
\begin{aligned}[t]
|\nabla_xU|_{\mathbb{L}^r({\rm B}_R)}
&=\left(\int_{{\rm B}_R}|\nabla_x U|^rdx\right)^{1/r}\\
&=\left(\int_{{\rm B}_1}\left|\nabla_y U_R(y)\frac{1}{R}\right|^rR^Ndy\right)^{1/r}
=R^{\frac{N-r}{r}}|\nabla_yU_R|_{\mathbb{L}^r({\rm B}_1)}.
\end{aligned}
\end{align}
Let \(p\geq N\), then by Sobolev's embedding theorem, for all \(r \geq p\) there exists \(K_1 = K_1(r)\) such that
\begin{equation}
\label{Sob}
|U|_{\mathbb{L}^r({\rm B}_1)} \leq K_1\left(|\nabla U|_{\mathbb{L}^p({\rm B}_1)} + |U|_{\mathbb{L}^p({\rm B}_1)}\right)\quad\forall U \in \mathbb{W}^{1,p}({\rm B}_1).
\end{equation}
On the other hand, we get
\begin{equation}
\label{lkjhg}
\begin{aligned}
|U|_{\mathbb{L}^p({\rm B}_R)}
&\leq \left(\int_{{\rm B}_R}|U|^{p-1}|U|dx\right)^{1/p}\\
&\leq\left(\int_{{\rm B}_R}|U|^{2(p-1)}dx\right)^{\frac{1}{2p}}\left(\int_{{\rm B}_R}|U|^2dx\right)^{\frac{1}{2p}}\\
&\leq |U|_{\mathbb{L}^{2(p-1)}({\rm B}_R)}^{\frac{p-1}{p}}|U|_{\mathbb{L}^2(\Omega)}^{\frac{1}{p}}.
\end{aligned}
\end{equation}
Then applying \eqref{Sob} with \(r=2(p-1)> p\), \eqref{poiu}, \eqref{poiuy} with \(r=p\), we obtain
\begin{equation}
\label{lkjh}
\begin{aligned}[t]
|U|_{\mathbb{L}^{2(p-1)}({\rm B}_R)}
&= R^{\frac{N}{2(p-1)}}|U_R|_{\mathbb{L}^{2(p-1)}({\rm B}_1)}\\
&\leq R^{\frac{N}{2(p-1)}}K_1\left(|\nabla U_R|_{\mathbb{L}^p({\rm B}_1)} + |U_R|_{\mathbb{L}^p({\rm B}_1)}\right)\\
&= R^{\frac{N}{2(p-1)}}K_1\left(R^{-\frac{N-p}{p}}|\nabla U|_{\mathbb{L}^p({\rm B}_R)} + R^{-\frac{N}{p}}|U|_{\mathbb{L}^p({\rm B}_R)}\right).
\end{aligned}
\end{equation}
Then substituting \eqref{lkjh} in \eqref{lkjhg}, we get
\[
\begin{aligned}
|U|_{\mathbb{L}^p({\rm B}_R)}
&\leq\left[R^{\frac{N}{2(p-1)}}K_1\left(R^{-\frac{N-p}{p}}|\nabla U|_{\mathbb{L}^p({\rm B}_R)} + R^{-\frac{N}{p}}|U|_{\mathbb{L}^p({\rm B}_R)}\right)\right]^{\frac{p-1}{p}}|U|_{\mathbb{L}^2(\Omega)}^{\frac{1}{p}}\\
&=\left(R^{\theta_1} K_1^{\frac{p-1}{p}}|\nabla U|_{\mathbb{L}^p({\rm B}_R)}^{\frac{p-1}{p}}
+K_1^{\frac{p-1}{p}}R^{\theta_2}|U|_{\mathbb{L}^p({\rm B}_R)}^{\frac{p-1}{p}}\right)|U|_{\mathbb{L}^2(\Omega)}^{\frac{1}{p}},
\end{aligned}
\]
where
\[
\theta_1 = \left(\frac{N}{2(p-1)}-\frac{N-p}{p}\right)\frac{p-1}{p}<1
\Leftrightarrow \frac{2N}{N+2}(<2)<p
\]
and
\[
\theta_2=\left(\frac{N}{2(p-1)}-\frac{N}{p}\right)\frac{p-1}{p}<0
\Leftrightarrow
2<p
\]

Since \eqref{1st_energy_ACGL_k_1} implies that \(|\tilde{U}^{k_n'}|_{\mathbb{L}^2({\rm B}_R)}\) is uniformly bounded and \(\theta_2<0\), there exists (a sufficiently large) \(R_0\) such that
%\[
%\begin{aligned}
%%&K_1^{\frac{p-1}{p}}R^{\theta_2}|\tilde{U}^{k_n'}|_{\mathbb{L}^2({\rm B}_R)}^{\frac{1}{p}}\\
%%&\leq
%p\{2(p-1)\}^{\frac{1}{p}-1}
%\leq
%\frac{1}{2}|\tilde{U}^{k_n'}|_{\mathbb{L}^p({\rm B}_R)}^{\frac{1}{p}}
%+
%|\tilde{U}^{k_n'}|_{\mathbb{L}^p({\rm B}_R)}^{\frac{1}{p}-1}
%\quad\forall R \geq R_0,
%\end{aligned}
%\]
%that is,
\[
K_1^{\frac{p-1}{p}}R^{\theta_2}|\tilde{U}^{k_n'}|_{\mathbb{L}^2(\Omega)}^{\frac{1}{p}}|\tilde{U}^{k_n'}|_{\mathbb{L}^p({\rm B}_R)}^{\frac{p-1}{p}}
\leq\frac{1}{2}|\tilde{U}^{k_n'}|_{\mathbb{L}^p({\rm B}_R)}+1
\quad\forall R \geq R_0.
\]
Hence we obtain
\[
|\tilde{U}^{k_n'}|_{\mathbb{L}^p({\rm B}_R)}
\leq
2R^{\theta_1}K_1^{\frac{p-1}{p}}|\nabla \tilde{U}^{k_n'}|_{\mathbb{L}^p({\rm B}_R)}^{\frac{p-1}{p}}|\tilde{U}^{k_n'}|_{\mathbb{L}^2(\Omega)}^{\frac{1}{p}}+2
\quad\forall R \geq R_0.
\]
Substituting this into \eqref{innerHol}, we finally deduce
\begin{equation}
\label{innerineq2}
\begin{aligned}
&\left|(\left[I\partial\varphi(U^{k_n'})\right]^\sim,\eta_R\tilde{U}^{k_n'})_{\mathbb{L}^2(\Omega)}\right|\\
&\leq
C|\nabla\tilde{U}^{k_n'}|_{\mathbb{L}^p(\Omega)}^{p-1}
\left\{
|\nabla\tilde{U}^{k_n'}|_{\mathbb{L}^p(\Omega)}^{\frac{p-1}{p}}
|\tilde{U}^{k_n'}|_{\mathbb{L}^2(\Omega)}^{\frac{1}{p}}R^{\theta_1}
+1
\right\}
|\nabla\eta|_\infty R^{-1}.
\end{aligned}
\end{equation}

Next for the case \(\max\{1, 2N/(2+N)\} < p \leq 2\), by H\"older's inequality, we have by \eqref{trunc}
\begin{align}
\notag
\left|(\left[I\partial\varphi(U^{k_n'})\right]^\sim,\eta_R\tilde{U}^{k_n'})_{\mathbb{L}^2(\Omega)}\right|
&\leq |\nabla \tilde{U}^{k'_n}|_{\mathbb{L}^p(\Omega)}^{p-1}|\tilde{U}^{k'_n}|_{\mathbb{L}^p({\rm B}_R)}|\nabla \eta_R|_\infty\\
\notag
&\leq |\nabla \tilde{U}^{k'_n}|_{\mathbb{L}^p(\Omega)}^{p-1}|\tilde{U}^{k'_n}|_{\mathbb{L}^2(\Omega)}|{\rm B}_R|^{\frac{2-p}{2p}}|\nabla \eta_R|_\infty\\
\label{trunc2}
&\leq C|\nabla \tilde{U}^{k'_n}|_{\mathbb{L}^p(\Omega)}^{p-1}|\tilde{U}^{k'_n}|_{\mathbb{L}^2(\Omega)}|\nabla \eta|_\infty R^{\frac{N(2-p)}{2p}-1},
\end{align}
where \(C\) denotes the constant independent of \(R\) and \(k'_n\).
We note that
\[
\frac{N(2-p)}{2p}-1 < 0 \Leftrightarrow \frac{2N}{2+N} < p.
\]

Thus by virtue of \eqref{innerineq}, \eqref{innerineq2} and \eqref{trunc2} together with Lemmas \ref{1st_energy_ACGL_k}, \ref{2nd_energy_ACGL_k}, there exist an appropriate constant \(C\) independent of \(R\), \(k'_n\) and \(\rho > 0\) such that
\begin{equation}
\label{innest}
\left|(\left[I\partial\varphi(U^{k_n'})\right]^\sim,\eta_R\tilde{U}^{k_n'})_{\mathbb{L}^2(\Omega)}\right|
\leq CR^{-\rho}
\quad\forall R \geq R_0.
\end{equation}

First we fix \(R > 0\) and take \(k'_n \to \infty\) in \eqref{innest}, then by \eqref{dfhjdfhjdfh} we have
\begin{equation}
\label{innest1}
|(Ih, \eta_RU)_{\mathbb{L}^2(\Omega)}| \leq CR^{-\rho}
\quad\forall R \geq R_0.
\end{equation}

On the other hand by the fact
\[
|(Ih, \eta_RU)_{\mathbb{R}^2}| \to |(Ih, U)_{\mathbb{R}^2}|\quad\mbox{as}\ R \to \infty,\ \mbox{a.e.}\ \Omega\times(0,T),
\]
and
\[
|(Ih, \eta_RU)_{\mathbb{R}^2}| \leq |\eta|_\infty|h|_{\mathbb{R}^2}|U|_{\mathbb{R}^2} \in {\rm L}^1(\Omega\times(0, T)),
\]
we can apply Lebesgue's dominated convergence theorem to obtain
\begin{equation}
\label{inconv}
\int_0^T|(Ih, \eta_RU)_{\mathbb{L}^2(\Omega)}|dt \to \int_0^T|(Ih, U)_{\mathbb{L}^2(\Omega)}|dt\quad\mbox{as}\ R \to \infty.
\end{equation}
Integrating \eqref{innest1} on \((0, T)\) and then passing to the limit \(R \to \infty\) with \eqref{inconv}, we conclude
\[
\int_0^T|(Ih, U)_{\mathbb{L}^2(\Omega)}|dt = 0,
\]
whence follows \eqref{IfU0}.

Hence, by \eqref{lkjhgf}, \eqref{lkjhgfd} and \eqref{IfU0}, we obtain
\begin{align}
\notag
&
\begin{aligned}[t]
&\lambda\int_0^Te^{-2t(\gamma+\lambda)}\left(\varphi(V)+\frac{1}{2}|V|_{\mathbb{L}^2(\Omega)}^2\right)dt\\
&-\lambda\int_0^Te^{-2t(\gamma+\lambda)}\left(\varphi(U)+\frac{1}{2}|U|_{\mathbb{L}^2(\Omega)}^2\right)dt
\end{aligned}\\
\notag
&\geq
\begin{aligned}[t]
&\int_0^Te^{-2t(\gamma+\lambda)}(\lambda h+\lambda U, V)_{\mathbb{L}^2}dt
+\frac{1}{2}\int_0^T\frac{d}{dt}(e^{-2t(\gamma+\lambda)}|U|_{\mathbb{L}^2(\Omega)}^2)dt\\
&+q\kappa\int_0^Te^{-2t(\gamma+\lambda)}\psi(U(t))dt
-\int_0^Te^{-2t(\gamma+\lambda)}(F,U)_{\mathbb{L}^2(\Omega)}dt
\end{aligned}\\
\label{6.39}
&=
\begin{aligned}[t]
&\int_0^Te^{-2t(\gamma+\lambda)}(\lambda h +\lambda U, V)_{\mathbb{L}^2(\Omega)}dt\\
&+\int_0^Te^{-2t(\gamma+\lambda)}
\left(
\frac{dU}{dt} + \alpha Ih+(\kappa + \beta I)g-(\gamma+\lambda) U-F,U
\right)_{\mathbb{L}^2(\Omega)}dt
\end{aligned}\\
\notag
&=\int_0^Te^{-2t(\gamma+\lambda)}(\lambda h +\lambda U, V-U)_{\mathbb{L}^2(\Omega)}dt,
\end{align}
where we used the fact that (see \eqref{6.10})
\[
\frac{dU}{dt}+\alpha Ih + (\kappa + \beta I)g - \gamma U - F = -\lambda h.
\]

Since \({\rm C}([0,T];\mathbb{C}_0^1(\Omega))\) is dense in \(D_\varphi := \left\{V \in {\rm L}^2(0,T;\mathbb{L}^2(\Omega));\int_0^T\varphi(V(t))dt < +\infty\right\}\), \eqref{6.39} holds true also for any \(V \in D_\varphi\).

Let \(t_0 \in (0,T)\) be Lebesgue point of \(h(\cdot)\) and \(V_0\) be an arbitrary element of \({\rm D}(\varphi) = \mathbb{V}_p(\Omega)\).
Take \(V \in D_\varphi\) in \eqref{6.39} such as
\[
V(t) =
\left\{
\begin{aligned}
&V_0&&t\in I_{\overline{h}}:=[t_0-{\overline{h}}/2,t_0+{\overline{h}}/2),\\
&U(t)&&t \in [0,T]\setminus I_{\overline{h}}.
\end{aligned}
\right.
\]
Then dividing \eqref{6.39} by \({\overline{h}}>0\) and letting \({\overline{h}}\to0\), we get
\begin{equation}
(h+U,V_0-U)_{\mathbb{L}^2(\Omega)}
\leq \left(\varphi(V_0) + \frac{1}{2}|V_0|_{\mathbb{L}^2(\Omega)}^2\right)
-\left(\varphi(U)+\frac{1}{2}|U|_{\mathbb{L}^2(\Omega)}^2\right)
\end{equation}
holds for a.e. \(t \in[0,T]\), which implies that
\[
h+U=\partial\left(\varphi(U)+\frac{1}{2}|U|_{\mathbb{L}^2}^2\right).
\]
Hence we conclude
\begin{equation}\label{6.42}
h=\partial\varphi(U)\quad\mbox{a.e.}\ t \in [0,T].
\end{equation}
Thus, in view of \eqref{6.10}, \eqref{6.15} and \eqref{6.42}, we find that \(U\) satisfies
\[
\frac{dU}{dt}(t) +(\lambda + \alpha I)\partial\varphi(U)
+(\kappa+\beta I)\partial\psi(U) -\gamma U = F.
\]

As for the initial condition
\[
U(t) \to U_0\quad\mbox{in}\ \mathbb{L}^2(\Omega)\ \mbox{as}\ t \downarrow 0
\]
and the fact that $U \in {\rm C}([0,T];  \mathbb{L}^2(\Omega))$ can be verified by the arguments similar to that in the last part of the proof of Theorem \ref{main_result_2}.

\section{Proof of Theorem \ref{main_result_2}}

Let \(\{U^k_0\}_{k \in \mathbb{N}} \subset \mathbb{V}_p\) such that \(U^k_0 \to U_0\) in \(\mathbb{L}^2(\Omega)\) and \(\mathop{{\rm supp}}U^k_0 \subset \Omega_k\), where \(\Omega_k \subset \Omega\) satisfies (i) and (ii).
Let \(U^k = U\) be solutions of (ACGL)\(_p\) with \(\Omega = \Omega_k\) corresponding to initial data \(U^k_0\) given by Proposition \ref{main_result_bdd}.
Here we can assume without loss of generality that for all \(k \in \mathbb{N}\)
\begin{equation}
\label{bddUk01}
|U^k_0|_{\mathbb{L}^2} \leq |U_0|_{\mathbb{L}^2} + 1,.
\end{equation}
Then using the above boundedness, we can deduce the following a priori estimates by much the same arguments as before.
\begin{Lem}
\label{1st_energy_ACGL_k1}
Let $U$ be a solution of (ACGL)\(_p\) with \(\Omega = \Omega_k\) and initial data \(U^k_0\).
Then there exists a positive constant $C_1$ depending only on $\gamma$, $T$, $|U_0|_{\mathbb{L}^2}$ and $\int_0^T|F|_{\mathbb{L}^2}^2 dt$ satisfying
\begin{align}\label{1st_energy_ACGL_k1_1}
\sup_{t \in [0,T]}|U(t)|_{\mathbb{L}^2}^2
+\int_0^T \varphi(U(s)) ds
+\int_0^T \psi(U(s)) ds
\leq C_1.
\end{align}
\end{Lem}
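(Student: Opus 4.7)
The plan is to mimic the proof of Lemma \ref{1st_energy_AE_mu} (which in turn follows Lemmas \ref{1st_energy_AE_nu} and \ref{1st_energy_AE_nu2}), adapted to the domain $\Omega_k$ and the initial datum $U^k_0$. Since the a priori bound must be uniform in $k$, the key observation is that the right-hand side of \eqref{1st_energy_ACGL_k1_1} depends only on $|U_0|_{\mathbb{L}^2}$, $\gamma$, $T$ and $\int_0^T|F|_{\mathbb{L}^2}^2dt$, which is consistent with \eqref{bddUk01}.

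First I would take the $\mathbb{L}^2(\Omega_k)$-inner product of (ACGL)$_p$ (on $\Omega_k$) with the solution $U^k(t)$. The skew-symmetry relations \eqref{orth:IU} immediately eliminate the two anti-Hermitian contributions, i.e. $(\alpha I\partial\varphi(U^k),U^k)_{\mathbb{L}^2}=0$ and $(\beta I\partial\psi(U^k),U^k)_{\mathbb{L}^2}=0$. Using the standard identities $(\partial\varphi(U^k),U^k)_{\mathbb{L}^2}=p\,\varphi(U^k)$ and $(\partial\psi(U^k),U^k)_{\mathbb{L}^2}=q\,\psi(U^k)$ together with $\tfrac12\tfrac{d}{dt}|U^k|_{\mathbb{L}^2}^2=(\tfrac{d}{dt}U^k,U^k)_{\mathbb{L}^2}$, this yields the differential inequality
\begin{equation*}
\tfrac12\tfrac{d}{dt}|U^k(t)|_{\mathbb{L}^2}^2+p\lambda\,\varphi(U^k(t))+q\kappa\,\psi(U^k(t))=\gamma|U^k(t)|_{\mathbb{L}^2}^2+(F(t),U^k(t))_{\mathbb{L}^2}.
\end{equation*}

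Next I would absorb the forcing term by Young's inequality, $(F,U^k)_{\mathbb{L}^2}\le\tfrac12|F|_{\mathbb{L}^2}^2+\tfrac12|U^k|_{\mathbb{L}^2}^2$, and apply Gronwall's lemma on $[0,T]$. In view of \eqref{bddUk01}, this produces a bound
\begin{equation*}
\sup_{t\in[0,T]}|U^k(t)|_{\mathbb{L}^2}^2\le e^{(2|\gamma|+1)T}\!\left(|U_0|_{\mathbb{L}^2}^2+2|U_0|_{\mathbb{L}^2}+1+\int_0^T\!|F|_{\mathbb{L}^2}^2\,dt\right),
\end{equation*}
depending only on the quantities listed in the lemma. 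Integrating the differential inequality over $(0,T)$ and using this $\mathbb{L}^2$-bound then controls $\int_0^T\varphi(U^k)\,dt$ and $\int_0^T\psi(U^k)\,dt$, completing the estimate.

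There is essentially no obstacle here: the cross-terms that could cause trouble are killed by the orthogonality of $I$ stated in \eqref{orth:IU}, and the remaining terms are nonnegative or handled by Young and Gronwall in the standard way. The only point that deserves attention is that the extension-by-zero convention makes $|U^k(t)|_{\mathbb{L}^2(\Omega_k)}=|\tilde U^k(t)|_{\mathbb{L}^2(\Omega)}$, so the estimate is genuinely uniform in $k$ and compatible with the passage-to-the-limit argument of Section 6.
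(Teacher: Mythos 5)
Your proposal is correct and follows essentially the same route as the paper: the authors prove this lemma by referring back to the energy estimate of Lemma \ref{1st_energy_AE_nu} (multiplication by \(U\), cancellation of the \(\alpha I\partial\varphi\) and \(\beta I\partial\psi\) terms via \eqref{orth:IU}, the identities \((\partial\varphi(U),U)_{\mathbb{L}^2}=p\varphi(U)\) and \((\partial\psi(U),U)_{\mathbb{L}^2}=q\psi(U)\), Young's inequality and Gronwall), combined with the uniform bound \eqref{bddUk01} on the approximate initial data. Your write-up is in fact cleaner than the paper's displayed inequality \eqref{5.2}, and your closing remark about the zero-extension making the bound uniform in \(k\) is exactly the point needed for Section 7.
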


\begin{Lem}[(cf. \cite{KOS1} Lemma 7.2)]
\label{2nd_energy_ACGL_k1}
Let $U$ be a solution of (ACGL)\(_p\) with \(\Omega = \Omega_k\) and initial data \(U^k_0\), and let $\left(\frac{\alpha}{\lambda}, \frac{\beta}{\kappa}\right) \in {\rm CGL}(c_q^{-1})$.
Then there exists a positive constant $C_2$ depending only on $\lambda, \kappa, \alpha, \beta, \gamma$, $T,\varphi(U_0), \psi(U_0)$, $|U_0|_{\mathbb{L}^2}$ and $\int_0^T|F|_{\mathbb{L}^2}^2 dt$ satisfying
\begin{align}
\notag
&\sup_{t \in [0,T]}t\varphi(U(t))
+ \sup_{t \in [0,T]}t\psi(U(t))\\
\label{2nd_energy_ACGL_k1_1}
&+ \int_0^T t\left| \frac{dU(t)}{dt}\right|_{\mathbb{L}^2}^2 dt
+ \int_0^T t|\partial \varphi(U(t))|_{\mathbb{L}^2}^2 dt
+ \int_0^T t|\partial \psi(U(t))|_{\mathbb{L}^2}^ 2dt
\leq C_2.
\end{align}
\end{Lem}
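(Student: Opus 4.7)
The plan is to mimic the proof of Lemma \ref{2nd_energy_AE_mu} with the weight $t$ inserted in front of every energy functional. Since $U^k$ is a genuine (not Yosida--regularised) solution of (ACGL)$_p$ on $\Omega_k$ given by Proposition \ref{main_result_bdd}, I may take the inner products directly with $\partial\varphi(U)$ and $\partial\psi(U)$, so no Yosida approximation is needed and the key inequality (\ref{key_inequality_1}) can be used in place of (\ref{key_inequality_2}).

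First, I multiply (ACGL)$_p$ (on $\Omega_k$) by $t\partial\varphi(U(t))$ and by $t\partial\psi(U(t))$ respectively, using (\ref{orth:IU}) to annihilate the $(\gamma U, \cdot)_{\mathbb{L}^2}$ contributions in the same manner as in Lemma \ref{2nd_energy_AE_mu}, and exploiting the identities $t\frac{d}{dt}\varphi(U)=\frac{d}{dt}(t\varphi(U))-\varphi(U)$ and $t\frac{d}{dt}\psi(U)=\frac{d}{dt}(t\psi(U))-\psi(U)$. Adding the first identity (multiplied by $\delta^2$) to the second and picking a small parameter $\epsilon\in(0,\min\{\lambda,\kappa\})$, the arithmetic--geometric mean inequality together with (\ref{consequence_from_orthogonality_2}) and (\ref{key_inequality_1}) yields a weighted differential inequality of the form
\[
\begin{aligned}
& \frac{d}{dt}\!\left[t\bigl(\delta^2\varphi(U)+\psi(U)\bigr)\right]
+ \epsilon\, t\bigl(\delta^2|\partial\varphi(U)|_{\mathbb{L}^2}^2+|\partial\psi(U)|_{\mathbb{L}^2}^2\bigr)
+ t\,J(\delta,\epsilon)\,|(\partial\varphi(U),I\partial\psi(U))_{\mathbb{L}^2}|\\
& \leq \delta^2\varphi(U)+\psi(U)+t\,R(t),
\end{aligned}
\]
where $J(\delta,\epsilon)$ is precisely the quantity appearing in the proof of Lemma \ref{2nd_energy_AE_mu} and $R(t)$ collects the standard $L^1(0,T)$-controlled remainders involving $\gamma_+(\varphi(U)+\psi(U))$, $|F|_{\mathbb{L}^2}^2$ and $|U|_{\mathbb{L}^2}^2$. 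The CGL-region assumption $\bigl(\frac{\alpha}{\lambda},\frac{\beta}{\kappa}\bigr)\in{\rm CGL}(c_q^{-1})$ then furnishes a choice of $\delta,\epsilon>0$ making $J(\delta,\epsilon)\geq 0$, exactly as in Lemma \ref{2nd_energy_AE_mu}.

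Second, integrating this inequality over $(0,t)$ for $t\in(0,T]$ kills the initial value (thanks to the factor $t$ in front), and the ``extra'' right-hand side terms $\int_0^t\varphi(U(s))\,ds$ and $\int_0^t\psi(U(s))\,ds$ produced by the integration by parts in time are bounded by Lemma \ref{1st_energy_ACGL_k1}; likewise the $|U|_{\mathbb{L}^2}$-contributions and $\int_0^t s|F|_{\mathbb{L}^2}^2\,ds\leq T\int_0^T|F|_{\mathbb{L}^2}^2\,ds$. This gives the bounds claimed in \eqref{2nd_energy_ACGL_k1_1} for $\sup_{t\in[0,T]}t\varphi(U(t))$, $\sup_{t\in[0,T]}t\psi(U(t))$, $\int_0^T t|\partial\varphi(U)|_{\mathbb{L}^2}^2\,dt$ and $\int_0^T t|\partial\psi(U)|_{\mathbb{L}^2}^2\,dt$. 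The remaining bound on $\int_0^T t|\frac{dU}{dt}|_{\mathbb{L}^2}^2\,dt$ is then read off directly from the equation (ACGL)$_p$ by isolating $\frac{dU}{dt}$, taking $\mathbb{L}^2$-norms, multiplying by $t$, integrating, and combining with the estimates just obtained together with Lemma \ref{1st_energy_ACGL_k1} and $F\in L^2(0,T;\mathbb{L}^2(\Omega))$.

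The main obstacle is the same one handled in Lemma \ref{2nd_energy_AE_mu}, namely the absorption of the indefinite mixed cross--terms $(\alpha I\partial\varphi(U),\partial\psi(U))_{\mathbb{L}^2}$ and $(\beta I\partial\psi(U),\partial\varphi(U))_{\mathbb{L}^2}$ on the left-hand side, which is exactly what requires the CGL-region hypothesis and the key inequality (\ref{key_inequality_1}). The insertion of the weight $t$ creates no further difficulty, because the integration-by-parts identity $t\frac{d}{dt}\varphi(U)=\frac{d}{dt}(t\varphi(U))-\varphi(U)$ produces only the innocuous $L^1$-term $\varphi(U)$, which is already controlled by Lemma \ref{1st_energy_ACGL_k1} \emph{without} any assumption on $\varphi(U_0)$ or $\psi(U_0)$; this is precisely what allows the estimate to depend only on $|U_0|_{\mathbb{L}^2}$.
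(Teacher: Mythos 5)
Your proposal is correct and coincides with the paper's own (very terse) proof, which simply multiplies the differential inequalities \eqref{adfhgtyyjhds} and \eqref{sfdsldflsksfdhsl} by \(t\) and integrates, absorbing the extra \(\varphi(U)+\psi(U)\) produced by \(t\frac{d}{dt}\varphi(U)=\frac{d}{dt}(t\varphi(U))-\varphi(U)\) via Lemma \ref{1st_energy_ACGL_k1}, with the CGL-region condition entering exactly through \(J(\delta,\epsilon)\geq 0\) as in Lemma \ref{2nd_energy_AE_mu}. Your closing observation that the resulting constant depends only on \(|U_0|_{\mathbb{L}^2}\) (and not on \(\varphi(U_0),\psi(U_0)\)) is also right, and is in fact essential for Theorem \ref{main_result_2}, where the initial datum is merely in \(\mathbb{L}^2(\Omega)\).
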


Lemma \ref{1st_energy_ACGL_k1} can be proved much the same way as in the proof of Lemmas \ref{1st_energy_AE_nu} and \ref{1st_energy_ACGL_k}.

To obtain \eqref{2nd_energy_ACGL_k1_1}, it suffices to we multiply \eqref{adfhgtyyjhds} and \eqref{sfdsldflsksfdhsl} by \(t \in (0, T)\) and integrate on \((0, T)\) with respect to \(t\).

By Lemmas \ref{1st_energy_ACGL_k1} and \ref{2nd_energy_ACGL_k1}, we can derive the following convergences of a subsequence \(\{U^{k_n}\} \subset \{U^k\}\) for any \(\delta \in (0, T)\):
\begin{alignat}{4}
\label{Uknw1}
U^{k_n} &\rightharpoonup U&&\quad\mbox{weakly in}\ {\rm L}^2(0, T;  \mathbb{L}^2(\Omega)),\\
\frac{dU^{k_n}}{dt} &\rightharpoonup \frac{dU}{dt}&&\quad\mbox{weakly in}\ {\rm L}^2(\delta, T;  \mathbb{L}^2(\Omega)),\\
\label{pphiUknw1}
\partial\varphi(U^{k_n}) &\rightharpoonup h&&\quad\mbox{weakly in}\ {\rm L}^2(\delta, T;  \mathbb{L}^2(\Omega)),\\
\partial\psi(U^{k_n}) &\rightharpoonup g&&\quad\mbox{weakly in}\ {\rm L}^2(\delta, T;  \mathbb{L}^2(\Omega)),
\end{alignat}
for some \(h, g \in {\rm L}^2(\delta, T; \mathbb{L}^2(\Omega))\).
Here we used the demiclosedness of \(\frac{d}{dt}\).

We repeat the same argument as above to obtain \(g = \partial\psi(U)\) fo a.e. \(t \in (0, T)\).

Multiplying \eqref{subdiffphi} by \(e^{-2t(\gamma + \lambda)}\) and integrating on \((\delta, T)\) we obtain
\[
\begin{aligned}
&\int_\delta^Te^{-2t(\gamma + \lambda)}(\left[\partial\varphi(U^{k'_n}(t))\right]^\sim + \tilde{U}^{k'_n}, V)_{\mathbb{L}^2}dt\\
&- \int_\delta^Te^{-2t(\gamma + \lambda)}(\left[\partial\varphi(U^{k'_n}(t))\right]^\sim + \tilde{U}^{k'_n}, \tilde{U}^{k'_n}(t))_{\mathbb{L}^2}dt\\
&\leq 
\begin{aligned}[t]
&\int_\delta^Te^{-2t(\gamma + \lambda)}\left\{\varphi(V)
+ \frac{1}{2}|V|_{\mathbb{L}^2}^2\right\}dt\\
& - \int_\delta^Te^{-2t(\gamma + \lambda)}\left\{\varphi(U^{k'_n}(t))
+\frac{1}{2}|\tilde{U}^{k'_n}|_{\mathbb{L}^2}^2\right\}dt.
\end{aligned}
\end{aligned}
\]

Again repeating the same argument, we obtain \eqref{IfU0} for a.e. \(t \in (0, T)\) so that it holds
\begin{align}
\notag
&\int_\delta^Te^{-2t(\gamma + \lambda)}(h + U, V - U)_{\mathbb{L}^2}dt\\
\label{subdiffint1}
&\leq
\begin{aligned}[t]
&\int_\delta^Te^{-2t\lambda(\gamma+\lambda)}\left\{\varphi(V)+\frac{1}{2}|V|_{\mathbb{L}^2}^2\right\}dt\\
&-\int_\delta^Te^{-2t\lambda(\gamma+\lambda)}\left\{\varphi(U(t))+\frac{1}{2}|U|_{\mathbb{L}^2}^2\right\}dt.
\end{aligned}
\end{align}
Taking same \(V \in D_\varphi\) as before,  we conclude \(h = \partial\varphi(U)\) a.e. \(t \in (0, T)\).

Then in order to complete the proof, 
   it suffices to check
   \begin{equation}\label{strogconv:U:L2}
    U(t) \rightarrow U_0
      \hspace{4mm}
          {\rm in}\ \mathbb{L}^2(\Omega)
             \hspace{2mm}
               {\rm as}\ t \downarrow 0.
   \end{equation}

 First we show
   $U(t) \rightharpoonup U_0$ weakly in
     $\mathbb{L}^2(\Omega)$.
 Multiplying (ACGL)\(_p\) with  initial data \(U^k_0\) by
   $W \in \mathbb{C}^\infty_0(\Omega)$, we have
\begin{equation}
\label{afytncvbfgdsap}
\begin{aligned}[b]
&        \frac{d}{dt}(\tilde{U}^k(t), W)_{\mathbb{L}^2} =
            \gamma ~\! (\tilde{U}^k(t), W)_{\mathbb{L}^2}
                + (F(t), W)_{\mathbb{L}^2}
\\
      &\hspace{10mm}    - ((\lambda+\alpha I) ~\! \left[\partial \varphi(U^k(t))\right]^\sim,
              W)_{\mathbb{L}^2}
                  - ((\kappa+\beta I) ~\! \partial \psi(\tilde{U}^k(t)),
                    W)_{\mathbb{L}^2}.
   \end{aligned}
\end{equation}
 Integrating \eqref{afytncvbfgdsap} over $(0,t)$ 
   and taking the absolute value, we get 
   \begin{align*} |(\tilde{U}^k(t)-U^k_0, W)_{\mathbb{L}^2}|
     &  \leq |\gamma||W|_{\mathbb{L}^2}
          \int_0^t |\tilde{U}^k(s)|_{\mathbb{L}^2}ds
            + |W|_{\mathbb{L}^2}\int_0^t |F(s)|_{\mathbb{L}^2}ds\\
              & \hspace{10mm}
                + (\lambda+|\alpha|)|\nabla W|_{\mathbb{L}^p}
                    \int_0^t |\nabla \tilde{U}^k(s)|_{\mathbb{L}^p}^{p-1}ds 
\\
    &  \hspace{20mm}
         + (\kappa+|\beta|)\int_0^t
            \int_\Omega |\tilde{U}^k(s)|_{\mathbb{R}^2}^{ q-1} |W|_{\mathbb{R}^2} dx ds.
   \end{align*}
 Then using H\"older's inequality
    and  Lemma \ref{1st_energy_ACGL_k1}, we obtain 
   \begin{align}
    \notag 
    & |(\tilde{U}^k(t)-U^k_0,  W)_{\mathbb{L}^2}|
         \leq |\gamma|\sqrt{C_1} ~\! |W|_{\mathbb{L}^2} ~\! t
            + |F(s)|_{ {\rm L}^2(0,t;  \mathbb{L}^2(\Omega))}
                 ~\! |W|_{\mathbb{L}^2} ~\! t^\frac{1}{2}
\\[2mm]
       &\quad   + (\lambda+|\alpha|) ~\! (p ~\! C_1)^\frac{p-1}{p} ~\! |\nabla W|_{\mathbb{L}^p} ~\! t^\frac{1}{p}
              + (\kappa+|\beta|) ~\! (q ~\! C_1)^\frac{q-1}{q} ~\! |W|_{\mathbb{L}^q} ~\! t^\frac{1}{q}.
   \end{align}
 Letting \(k = k_n'\) with $n \rightarrow \infty$,
   we obtain $|(U(t)-U_0, W)_{\mathbb{L}^2}|
     \leq C \min\left\{t^{\frac{1}{p}},t^\frac{1}{q}\right\}$ for sufficiently small $t>0$,
   which implies that $U(t) \rightarrow U_0$ in $\mathcal{D}'(\Omega)$.
 Since $\mathbb{C}^\infty(\Omega)
   \subset \mathbb{L}^2(\Omega)$ is dense, we find that 
     $U(t) \rightharpoonup U_0$ weakly in
       $\mathbb{L}^2(\Omega)$.

 Then, in order to derive \eqref{strogconv:U:L2}, it suffices to show that 
  $|U(t)|_{\mathbb{L}^2}^2
   \rightarrow |U_0|_{\mathbb{L}^2}^2$.
By the same argument as for \eqref{5.2} with the aid of \eqref{orth:IU}, we have for \(k \geq l\)
 \vspace{-1mm}
   \begin{equation*} 
   \label{groneq1}
     |U^k(t)|_{\mathbb{L}^2(\Omega_l)}^2
        \leq e^{(2\gamma_+ +1)t}\left\{ |U_0^k|_{
          \mathbb{L}^2}^2
             + \int_0^t |F(s)|_{\mathbb{L}^2}^2 ds
                \right\}\quad\forall t \in [0, T].
   \end{equation*}
Then by virtue of \eqref{dfhjdfhjdfh}, we let $k \rightarrow \infty$ to obtain
   \begin{equation} 
\begin{aligned}[t]
   \label{groneq2}
     |U(t)|_{\Omega_l}|_{\mathbb{L}^2(\Omega_l)}^2
&=     |[U(t)|_{\Omega_l}]^\sim|_{\mathbb{L}^2}^2\\
       & \leq e^{(2\gamma_+ +1)t}\left\{ |U_0|_{
          \mathbb{L}^2}^2
             + \int_0^t |F(s)|_{\mathbb{L}^2}^2 ds
                \right\}\quad\forall t \in [0, T].
\end{aligned}
   \end{equation}
It is clear that \(\{|[U(t,x)|_{\Omega_l}]^\sim|\}_{l \in \mathbb{N}}\) forms a pointwise monotonically increasing sequence.
Hence \eqref{groneq2} and  Beppo Levi's theorem yields that \([U(t,x)|_{\Omega_l}]^\sim\) converges to \(U(t,x)\) in \(\mathbb{L}^2(\Omega)\) for all \(t \in [0, T]\) and that \(U\) satisfies
   \begin{equation*} 
     |U(t)|_{\mathbb{L}^2}^2
        \leq e^{(2\gamma_+ +1)t}\left\{ |U_0|_{
          \mathbb{L}^2}^2
             + \int_0^t |F(s)|_{\mathbb{L}^2}^2 ds
                \right\}\quad\forall t \in [0, T].
   \end{equation*}
Here letting $t \downarrow 0$, we have
   $\overline{\lim}_{t \downarrow 0} |U(t)|_{
     \mathbb{L}^2}^2 \leq |U_0|_{\mathbb{L}^2}^2$.
 On the other hand, by virtue of the lower semicontinuity 
    of the norm with respect to the weak 
      convergence $U(t) \rightharpoonup U_0$,  
   we get $|U_0|_{\mathbb{L}^2}^2
             \leq \underline{\lim}_{t \downarrow 0} |U(t)|_{
               \mathbb{L}^2}^2$. Thus we can conclude that 
   $|U(t)|_{\mathbb{L}^2}^2 
     \rightarrow |U_0|_{\mathbb{L}^2}^2$.

%%%%%%%%%%%%%%%%%%%%%%%%%%%%%%%%%%%%%%%%%%%%%%%%%%
%%%%%%%%%%%%%%%%%%%%%%%%%%%%%%%%%%%%%%%%%%%%%%%%%%
%%  Appendix Amalgam Spaces  %%%%%%%%%%%%%%%%%%%%%
%%%%%%%%%%%%%%%%%%%%%%%%%%%%%%%%%%%%%%%%%%%%%%%%%%
%%%%%%%%%%%%%%%%%%%%%%%%%%%%%%%%%%%%%%%%%%%%%%%%%%
\appendix
\section{Amalgam Spaces}\label{AmalSP}
In this section we investigate amalgam spaces defined in \S\ref{sec-2}.
Here we recall the notation of \(\mathbb{X}_p(\Omega)\):
\[
\mathbb{X}_p(\Omega) := \left\{u \in \mathbb{L}^2(\Omega);  \nabla u \in \left(\mathbb{L}^p(\Omega)\right)^N\right\}
\]
with norm
\[
|u|_{\mathbb{X}_p} :=
\left\{
\begin{aligned}
&\left[|u|_{\mathbb{L}^2}^p + |\nabla u|_{\mathbb{L}^p}^p\right]^{1/p}&&\mbox{for}\ p \geq 2,\\
&\left[|u|_{\mathbb{L}^2}^{p'} + |\nabla u|_{\mathbb{L}^p}^{p'}\right]^{1/{p'}}&&\mbox{for}\ \max\left\{1,\frac{2N}{N+2}\right\} < p \leq 2
\end{aligned}
\right.
\]
for all \(u \in \mathbb{X}_p(\Omega)\) and \((p,p')\) are H\"older conjugate exponents such that
\[
\frac{1}{p} + \frac{1}{p'} = 1.
\]

The aim of this appendix is to show that \(\mathbb{X}_p(\Omega)\) is uniformly convex.
For this, we prepare Clarkson's inequalities for vector valued functions.

\subsection{Clarkson's Inequalities for Vector Valued Functions}
We prove here the following two lemmas.
\begin{Lem}[Clarkson's first inequality]
Let \(2 \leq p < \infty\) and let \(N \in \mathbb{N}\).
Then
\begin{align}\label{aaaa}
\left\|\frac{f+g}{2}\right\|_{{\rm L}^p(\Omega)}^p \hspace{-2mm}+ \left\|\frac{f-g}{2}\right\|_{{\rm L}^p(\Omega)}^p
\leq
\left(\frac{1}{2}\|f\|_{{\rm L}^p(\Omega)}^{p'} + \frac{1}{2}\|g\|_{{\rm L}^p(\Omega)}^{p'}\right)^{\frac{1}{p-1}}
\quad\forall f, g \in ({\rm L}^p(\Omega))^N.
\end{align}
\end{Lem}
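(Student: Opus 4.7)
The plan is to combine a pointwise two-vector inequality with Minkowski's integral inequality, the classical recipe for deducing Clarkson-type estimates. The crucial intermediate step is the pointwise bound, valid for all $a, b \in \mathbb{R}^N$ and $p \geq 2$:
\[
\left|\frac{a+b}{2}\right|^p + \left|\frac{a-b}{2}\right|^p \leq \left(\frac{|a|^{p'} + |b|^{p'}}{2}\right)^{p-1},
\]
with $|\cdot|$ the Euclidean norm. Both sides are rotationally invariant and depend on $(a,b)$ only through the plane they span, so one may reduce to $a, b \in \mathbb{R}^2$. A maximization with respect to the angle between $a$ and $b$, exploiting the convexity of $t \mapsto t^{p/2}$ for $p \geq 2$, shows that the left-hand side attains its maximum at fixed $|a|, |b|$ when $a$ and $b$ are colinear. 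The problem therefore reduces to the one-variable inequality $(1+t)^p + (1-t)^p \leq 2(1+t^{p'})^{p-1}$ on $[0,1]$, with equality at $t=0$ and $t=1$.

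I would prove this scalar inequality by Riesz--Thorin interpolation applied to the normalized Hadamard map $M = \tfrac{1}{\sqrt{2}}\begin{pmatrix}1 & 1\\ 1 & -1\end{pmatrix}$: by the parallelogram identity $M$ is an isometry $\ell^2 \to \ell^2$, and the pointwise triangle inequality gives operator norm $\leq 1/\sqrt{2}$ from $\ell^1$ to $\ell^\infty$. Interpolating between these endpoints with parameter $\theta = (p-2)/p$ produces $\|M \colon \ell^{p'} \to \ell^p\| \leq 2^{-(p-2)/(2p)}$, which upon unwinding yields the pointwise bound. Alternatively, one can verify the one-variable inequality by direct endpoint-matching calculus on $[0,1]$.

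Once the pointwise inequality is in hand, I integrate it with $a = f(x), b = g(x)$ over $\Omega$ to obtain
\[
\left\|\frac{f+g}{2}\right\|_{{\rm L}^p(\Omega)}^p + \left\|\frac{f-g}{2}\right\|_{{\rm L}^p(\Omega)}^p \leq \left\|\frac{h+k}{2}\right\|_{{\rm L}^{p-1}(\Omega)}^{p-1},
\]
where $h := |f|^{p'}$ and $k := |g|^{p'}$. Since $p-1 \geq 1$, Minkowski's inequality in ${\rm L}^{p-1}(\Omega)$ gives $\|(h+k)/2\|_{{\rm L}^{p-1}} \leq (\|h\|_{{\rm L}^{p-1}} + \|k\|_{{\rm L}^{p-1}})/2$; using the algebraic identity $(p-1)p' = p$, I compute $\|h\|_{{\rm L}^{p-1}} = \bigl(\int_\Omega |f|^p\,dx\bigr)^{1/(p-1)} = \|f\|_{{\rm L}^p}^{p'}$, and analogously for $k$. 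Raising both sides to the power $p-1$ then delivers the desired right-hand side.

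The main obstacle is the pointwise step, which is a sharp two-point Clarkson-type estimate for Euclidean vectors. The angular reduction from $\mathbb{R}^N$ to colinear scalars requires a careful convexity argument, and the resulting one-variable inequality, though classical in flavor, is cleanest via Riesz--Thorin interpolation (exploiting the $\ell^2$-isometry of $M$ furnished by the parallelogram identity). Once that pointwise bound is available, the integration and Minkowski step are routine.
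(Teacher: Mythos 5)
Your proposal is correct and, at the level of overall architecture, coincides with the paper's proof: both reduce the claim to the pointwise two-vector inequality
\[
\left|\tfrac{a+b}{2}\right|^p + \left|\tfrac{a-b}{2}\right|^p \;\leq\; \left(\tfrac{1}{2}|a|^{p'}+\tfrac{1}{2}|b|^{p'}\right)^{p/p'},\qquad a,b\in\mathbb{R}^N,
\]
reduce that to the colinear one-variable case by maximizing over the angle between \(a\) and \(b\) (the paper shows the relevant function of \(\theta\) is monotone increasing on \([0,1]\); your convexity-plus-evenness argument yields the same endpoint maximization, since \(|r\pm x|^2\ge 0\) keeps the arguments of \(t\mapsto t^{p/2}\) nonnegative), and then pass from the pointwise to the integral inequality via Minkowski. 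Your Minkowski step --- the triangle inequality for \(h=|f|^{p'}\), \(k=|g|^{p'}\) in \({\rm L}^{p-1}(\Omega)\) combined with \((p-1)p'=p\) --- is precisely the specialization of the mixed-norm Minkowski integral inequality \eqref{mnb} that the paper uses, just phrased more directly. The one genuinely different ingredient is the scalar inequality \((1+t)^p+(1-t)^p\le 2(1+t^{p'})^{p-1}\) on \([0,1]\): the paper simply quotes it from Friedrichs \cite{F1}, whereas you derive it by Riesz--Thorin interpolation of the normalized Hadamard matrix between the \(\ell^2\to\ell^2\) isometry and the \(\ell^1\to\ell^\infty\) bound \(2^{-1/2}\); your exponent \(\theta=(p-2)/p\) and constant \(2^{-(p-2)/(2p)}\) check out (and the endpoints lie in the lower triangle, so the real-scalar version of the convexity theorem suffices), which makes the argument self-contained at the cost of invoking interpolation. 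One point you should make explicit: what your proof delivers --- and what the paper's own proof \eqref{lkj}--\eqref{mnb} actually delivers --- is the right-hand side \(\bigl(\tfrac{1}{2}\|f\|_{{\rm L}^p}^{p'}+\tfrac{1}{2}\|g\|_{{\rm L}^p}^{p'}\bigr)^{p-1}\) with outer exponent \(p-1=p/p'\); the exponent \(\tfrac{1}{p-1}\) printed in \eqref{aaaa} is a typo (the two agree only at \(p=2\), and the printed version violates homogeneity for \(p>2\)), so the form you arrive at is the correct one.
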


\begin{Lem}[Clarkson's second inequality]
Let \(1 \leq p < 2\), \(\frac{1}{p} + \frac{1}{p'} = 1\) and let \(N \in \mathbb{N}\).
Then
\begin{align}\label{bbbb}
\left\|\frac{f+g}{2}\right\|_{{\rm L}^p(\Omega)}^{p'} \hspace{-2mm}+ \left\|\frac{f-g}{2}\right\|_{{\rm L}^p(\Omega)}^{p'}
\leq
\left(
\frac{1}{2}\|f\|_{{\rm L}^p(\Omega)}^p + \frac{1}{2}\|g\|_{{\rm L}^p(\Omega)}^p
\right)^{\frac{1}{p-1}}
\ \forall f, g \in ({\rm L}^p(\Omega))^N.
\end{align}
\end{Lem}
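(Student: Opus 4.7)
The approach is to deduce Lemma 2 by duality from Lemma 1 just proved, following the classical scalar-valued argument (see, e.g., Lieb--Loss, Analysis, Ch.~2). The vector-valued case requires no new ingredient, because the dualities $((\mathrm{L}^p(\Omega))^N)^{*} = (\mathrm{L}^{p'}(\Omega))^N$ and $(\ell^r)^{*} = \ell^{r'}$ on the product norms extend verbatim from the scalar to the $\mathbb{R}^N$-valued setting.

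The first step is to rewrite \eqref{bbbb} as an operator-norm bound for the parallelogram map
\[
T(f,g) := \left(\tfrac{f+g}{2},\,\tfrac{f-g}{2}\right),
\]
acting on $({\rm L}^p(\Omega))^N \times ({\rm L}^p(\Omega))^N$. Raising both sides of \eqref{bbbb} to the power $1/p'$ and using the identity $1/(p'(p-1)) = 1/p$, the claim becomes
\[
\left(\left\|\tfrac{f+g}{2}\right\|_{{\rm L}^p}^{p'} + \left\|\tfrac{f-g}{2}\right\|_{{\rm L}^p}^{p'}\right)^{\!1/p'} \leq 2^{-1/p}\left(\|f\|_{{\rm L}^p}^{p} + \|g\|_{{\rm L}^p}^{p}\right)^{\!1/p},
\]
i.e.\ that $T$ has operator norm at most $2^{-1/p}$ from the domain $({\rm L}^p)^N \oplus_p ({\rm L}^p)^N$ into the codomain $({\rm L}^p)^N \oplus_{p'} ({\rm L}^p)^N$.

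Next I would pass to the dual. Under the canonical pairing $\langle (u,v),(\phi,\psi)\rangle = \int_\Omega (u\cdot\phi + v\cdot\psi)\,dx$, the dual of $({\rm L}^p)^N \oplus_r ({\rm L}^p)^N$ is $({\rm L}^{p'})^N \oplus_{r'} ({\rm L}^{p'})^N$, and a short direct computation shows that $T$ is self-adjoint with respect to this pairing. Since $\|T\| = \|T^{*}\|$, the target estimate above is equivalent to the analogous bound for $T$ regarded as an operator from $({\rm L}^{p'})^N \oplus_p ({\rm L}^{p'})^N$ into $({\rm L}^{p'})^N \oplus_{p'} ({\rm L}^{p'})^N$. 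Because $p < 2$ forces $p' \geq 2$, this dual bound is precisely what Lemma 1 provides when applied at the exponent $p'$, once it is raised to the $1/p'$ power and rearranged into operator-norm form.

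The main technical obstacle is the careful bookkeeping of exponents and constants—notably verifying $p/p' = p-1$, $(p')'=p$, and $1/(p'(p-1))=1/p$, and checking that the dual norms on the product spaces come out exactly as asserted. A secondary issue is the endpoint $p=1$ (so $p'=\infty$), where the duality $({\rm L}^1)^{*} = {\rm L}^\infty$ is nonreflexive and the above operator-theoretic argument does not apply directly; in that case \eqref{bbbb} reduces to the triangle inequality $\|(f\pm g)/2\|_{{\rm L}^1} \leq \frac{1}{2}(\|f\|_{{\rm L}^1} + \|g\|_{{\rm L}^1})$, which can be handled separately.
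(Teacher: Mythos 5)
Your proposal is correct in substance but takes a genuinely different route from the paper. The paper does not dualize: it proves the \(p\geq 2\) and \(1<p\leq 2\) cases by one and the same mechanism, namely taking the \({\rm L}^{r}\)-norm of the pointwise inequality \eqref{LClar} and using the extended Minkowski integral inequality to interchange the two-point \(\ell\)-sum with the \(\Omega\)-integral; carried out in \({\rm L}^{p'}\) at the exponent \(p'\geq 2\) this yields the mixed-norm estimate \eqref{bbbc}, which upon interchanging the roles of \(p\) and \(p'\) \emph{is} \eqref{bbbb}. Your operator-norm reformulation of \eqref{bbbb} (norm of \(T\) at most \(2^{-1/p}\) from \(\oplus_p\) into \(\oplus_{p'}\)), the identification \(T=T^{*}\) under the stated pairing, and the reduction via \(\|T\|=\|T^{*}\|\) to the corresponding bound on \(({\rm L}^{p'})^N\oplus_p({\rm L}^{p'})^N\) are all sound, and constitute a legitimate classical alternative; what it buys is brevity given the first lemma, at the cost of the isometric dual identifications, while the paper's route is elementary and self-contained. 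Two points of bookkeeping deserve attention. First, your argument genuinely needs the \emph{strong} form of the first inequality, with the \(\ell^{p'}\)-mean \(\left(\tfrac12\|f\|^{p'}+\tfrac12\|g\|^{p'}\right)^{1/p'}\) of the data on the right-hand side; the weaker standard form with the \(\ell^{p}\)-mean would only give the constant \(2^{-1/p'}>2^{-1/p}\) and the argument would not close. Second, the step ``raise to the power \(1/p'\) and rearrange'' produces the required operator bound only if the outer exponent in \eqref{aaaa} is \(p-1=p/p'\); the exponent \(\frac{1}{p-1}\) as printed is a misprint (the displayed inequality is not positively homogeneous under \(f,g\mapsto tf,tg\)), whereas the form actually established by the paper's proof, via \eqref{lkj} and \eqref{mnb}, carries the exponent \(p-1\). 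With that correction your exponent computation closes. Your separate treatment of the endpoint \(p=1\) is appropriate; the paper's own argument likewise covers only \(1<p\leq 2\), and only \(p>1\) is ever used.
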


\begin{proof}
First we claim that the following local Clarkson inequality holds, namely
\begin{align}\label{LClar}
\left\{\left|\frac{a + b}{2}\right|^p + \left|\frac{a-b}{2}\right|^p\right\}^{1/p}
\leq
\left\{
\frac{1}{2}|a|^{p'} + \frac{1}{2}|b|^{p'}
\right\}^{1/p'}
\end{align}
for all \(a, b \in \mathbb{R}^N\) and \(p \geq 2\) with \(\frac{1}{p} + \frac{1}{p'} = 1\).

To show \eqref{LClar}, it suffices to get
\begin{align}
\label{a}
\left|\frac{a+b}{2}\right|^p + \left|\frac{a-b}{2}\right|^p
\leq
\left\{\frac{1}{2}|a|^{p'}+\frac{1}{2}|b|^{p'}\right\}^{p/p'}
\quad\forall a,b \in \mathbb{R}^N.
\end{align}

If \(ab=0\), then \eqref{a} is obvious.
Let \(a=2y\), \(b=2z\) and divide \eqref{a} by \(|y|\) assuming \(0 \leq |z| \leq |y|\) and let \(x := \frac{z}{|y|}\).
Then without loss of generality, we can rewrite \eqref{a} in the form
\begin{align}
\label{b}
\begin{aligned}
\left|r + x\right|^p + \left|r - x\right|^p
&=
(1 + |x|^2 + 2(r\cdot x))^{p/2} + (1 + |x|^2 - 2(r\cdot x))^{p/2}\\
&\leq
2(1+|x|^{p'})^{p/p'},
\end{aligned}
\end{align}
where \(r, x \in \mathbb{R}^N\) satisfies \(|r| = 1\) and \(0 \leq |x| \leq 1\) and \((r\cdot x)\) is the innerproduct in \(\mathbb{R}^N\).
In view of the fact that \(|(r\cdot x)| \leq |x|\), we introduce the following function \(f(\cdot)\):
\begin{align}
f(\theta) = (1 + |x|^2 + 2|x|\theta)^{p/2} + (1 + |x|^2 - 2|x|\theta)^{p/2}\quad 0 \leq \theta \leq 1.
\end{align}
Since
\begin{align}
\frac{d}{d\theta}f(\theta)
=
pu\left[
(1 + |x|^2 + 2|x|\theta)^{p/2-1} - (1 + |x|^2 - 2|x|\theta)^{p/2-1}
\right]
\geq 0,
\end{align}
we get
\begin{equation}\label{asdfg}
f(\theta) \leq f(1)\quad\mbox{for all}\ \theta \in [0,1].
\end{equation}
For the case \((r\cdot x) \geq 0\), apply \eqref{asdfg} with \(\theta = (r\cdot x)\) and for the case \((r\cdot x) \leq 0\), apply \eqref{asdfg} with \(\theta = -(r\cdot x)\).
Then we have
\begin{align}
\label{c}
\begin{aligned}
&(1 + |x|^2 + 2(r\cdot x))^{p/2} + (1 + |x|^2 - 2(r\cdot x))^{p/2}\\
&\leq
(1 + |x|^2 + 2|x|)^{p/2} + (1 + |x|^2 - 2|x|)^{p/2}\\
&=
(1 + |x|)^p + (1 - |x|)^p.
\end{aligned}
\end{align}
Combining \eqref{b} with \eqref{c}, we find that to verify \eqref{a}, it suffices to show
\begin{align}
(1 + |x|)^p + (1 - |x|)^p \leq 2(1+|x|^{p'})^{p/p'},
\end{align}
which is nothing but the inequality (1)\(''\) in \cite{F1} for 1 dimension.

The lest part is much the same as in \cite{F1} due to Minkowski's integral inequality but we give the proof for the sake of completeness.
We recall extended Minkowski's integral inequality for measurable function \(f:S_1\times S_2 \to \mathbb{R}\) on two \(\sigma\)-finite measure space \((S_1, \mu), (S_2, \nu)\):
\begin{equation}\label{exMin}
\left\{
\int_{S_2}\left|
\int_{S_1}
|f(x,y)|^q\mu(dx)
\right|^{\frac{p}{q}}\nu(dy)
\right\}^{\frac{1}{p}}
\leq
\left\{
\int_{S_1}\left|
\int_{S_2}
|f(x,y)|^p\nu(dx)
\right|^{\frac{q}{p}}\mu(dy)
\right\}^{\frac{1}{q}}
,
\end{equation}
where \(0 < q \leq p\).

If one takes \(S_1 = \{1,2\}\) and \(\mu\) to be counting measure, one obtains
\begin{equation}\label{mnb}
\left\{
\int_{S_2}
\left\{
|f_1(y)|^q+|f_2(y)|^q
\right\}^{\frac{p}{q}}
\nu(dy)
\right\}^{\frac{1}{p}}
\leq
\left\{
\|f_1\|_{{\rm L}^p(S_2)}^q
+
\|f_2\|_{{\rm L}^p(S_2)}^q
\right\}^{\frac{1}{q}}\quad(q\leq p).
\end{equation}
We take \({\rm L}^p(\Omega)\)-norm of \eqref{LClar} with \(a = f(x), b = g(x)\) to obtain
\begin{equation}\label{lkj}
\left\{\left\|\frac{f+g}{2}\right\|_{{\rm L}^p(\Omega)}^p
+
\left\|\frac{f-g}{2}\right\|_{{\rm L}^p(\Omega)}^p
\right\}^{\frac{1}{p}}
\leq
\left\{
\int_\Omega
\left\{
\frac{1}{2}|f(x)|^{p'}+\frac{1}{2}|g(x)|^{p'}
\right\}^{\frac{p}{p'}}dx
\right\}^{\frac{1}{p}}.
\end{equation}
Since \(p \geq 2 \Leftrightarrow p' \leq p\), we combine \eqref{lkj} with \eqref{mnb} (\(q = p'\), \(S_2 = \Omega\), \(f_1 = \frac{1}{2^{1/p'}}f, f_2 = \frac{1}{2^{1/p'}}g\)) to obtain \eqref{aaaa}.

If one takes \(S_2 = \{1,2\}\) and \(\nu\) to be counting measure, one obtains
\begin{equation}\label{mna}
\left\{
\|f_1\|_{{\rm L}^q(S_1)}^p
+
\|f_2\|_{{\rm L}^q(S_1)}^p
\right\}^{\frac{1}{p}}
\leq
\left\{
\int_{S_1}
\left\{
|f_1(y)|^p+|f_2(y)|^p
\right\}^{\frac{q}{p}}
\nu(dy)
\right\}^{\frac{1}{q}}\quad(q\leq p).
\end{equation}
We in turn take \({\rm L}^{p'}(\Omega)\)-norm of \eqref{LClar} with \(a = f(x), b = g(x)\) to obtain
\begin{equation}\label{poi}
\begin{aligned}
&\left\{\int_\Omega
\left\{
\left|\frac{f(x)+g(x)}{2}\right|^p
+
\left|\frac{f(x)-g(x)}{2}\right|^p
\right\}^{\frac{p'}{p}}
dx
\right\}^{\frac{1}{p'}}\\
&\leq
\left(
\frac{1}{2}\|f\|_{{\rm L}^{p'}(\Omega)}^{p'}
+
\frac{1}{2}\|g\|_{{\rm L}^{p'}(\Omega)}^{p'}
\right)^{\frac{1}{p'}}.
\end{aligned}
\end{equation}
Since \(p \geq 2 \Leftrightarrow p' \leq p\), we combine \eqref{poi} with \eqref{mna} (\(q = p'\), \(S_1 = \Omega\), \(f_1 = \frac{f+g}{2}, f_2 = \frac{f-g}{2}\)) to obtain 
\begin{equation}\label{bbbc}
\left(
\left\|\frac{f+g}{2}\right\|_{{\rm L}^{p'}(\Omega)}^p
+
\left\|\frac{f-g}{2}\right\|_{{\rm L}^{p'}(\Omega)}^p
\right)^{\frac{1}{p}}
\leq
\left(
\frac{1}{2}\|f\|_{{\rm L}^{p'}(\Omega)}^{p'}
+
\frac{1}{2}\|g\|_{{\rm L}^{p'}(\Omega)}^{p'}
\right)^{\frac{1}{p'}}.
\end{equation}
For the case where \(1<p\leq 2\), since \(p' \geq 2\) we can apply \eqref{bbbc} by interchanging the roles of \(p\) and \(p'\) to obtain \eqref{bbbb}.
\end{proof}

%%%%%%%%%%%%%%%%%%%%%%%%%%%%%%%%%%%%%%%%%%%%%%%%%%
%%%%%%%%%%%%%%%%%%%%%%%%%%%%%%%%%%%%%%%%%%%%%%%%%%
%%%%%%%%%%%%%%%%%%%%%%%%%%%%%%%%%%%%%%%%%%%%%%%%%%
\subsection{Uniform Convexity of \(\mathbb{X}_p(\Omega)\)}
We show that the Banach space \(\mathbb{X}_p(\Omega)\) is uniformly convex.

Before proving this, we prepare two inequalities for \(r \geq 1\):
\begin{align}
\label{convineq1}
(a-b)^r &\leq a^r - b^r&&\mbox{for}\ a\geq b\geq0,\\
\label{convineq2}
(a+b)^r &\leq 2^{r-1}\left(a^r + b^r\right)&&\mbox{for}\ a, b \geq 0.
\end{align}
In fact, if \(a=0\), then \eqref{convineq1} and \eqref{convineq2} are obvious.
For the case where \(a > 0\), dividing both sides of \eqref{convineq1} and \eqref{convineq2} by \(a > 0\), we find that it suffices to show the following:
\begin{align}
\label{convineq12}
(1-x)^r &\leq 1 - x^r&&\mbox{for}\ 0 \leq x \leq 1,\\
\label{convineq22}
(1+x)^r &\leq 2^{r-1}\left(1 + x^r\right)&&\mbox{for}\ x \geq 0.
\end{align}
Put
\[
g_\pm(x) = \frac{(1\pm x)^r}{1\pm x^r},
\]
then the differentiation of \(g\) gives
\[
g_\pm'(x) = \frac{r(1\pm x)^{r-1}}{(1\pm x^r)^2}
%\left[\pm1+x^r-(\pm1+ x)x^{r-1}\right]
\left[\pm(1- x^{r-1})\right].
\]
Hence \(g_-(x)\) decreases monotonically for \(0 \leq x < 1\), which together with \(g_-(0) = 1\) means \eqref{convineq12}.
On the other hand \(g_+(x)\) takes its maximum at \(1\) with value \(2^{r-1}\), that is \eqref{convineq22}.

First we treat the case where \(p \geq 2\).
By the Clarkson's first inequality \eqref{aaaa}, we have
\begin{align}
\label{Cla1}
\left|\frac{u+v}{2}\right|_{\mathbb{L}^2}^2 + \left|\frac{u-v}{2}\right|_{\mathbb{L}^2}^2 &\leq \frac{1}{2}(|u|_{\mathbb{L}^2}^2 + |v|_{\mathbb{L}^2}^2),\\
\label{Cla2}
\left|\frac{\nabla u+\nabla v}{2}\right|_{\mathbb{L}^p}^p + \left|\frac{\nabla u-\nabla v}{2}\right|_{\mathbb{L}^p}^p &\leq \frac{1}{2}(|\nabla u|_{\mathbb{L}^p}^p + |\nabla v|_{\mathbb{L}^p}^p).
\end{align}
Inequalities \eqref{Cla1}, \eqref{convineq1} and \eqref{convineq2} lead
\begin{equation}
\label{Cla3}
\begin{aligned}
\left|\frac{u+v}{2}\right|_{\mathbb{L}^2}^p 
&\leq \left\{\frac{1}{2}(|u|_{\mathbb{L}^2}^2 + |v|_{\mathbb{L}^2}^2) - \left|\frac{u-v}{2}\right|_{\mathbb{L}^2}^2\right\}^{\frac{p}{2}}\\
&\leq \frac{1}{2}(|u|_{\mathbb{L}^2}^p + |v|_{\mathbb{L}^2}^p) - \left|\frac{u-v}{2}\right|_{\mathbb{L}^2}^p.
\end{aligned}
\end{equation}
We combine \eqref{Cla2} with \eqref{Cla3} to obtain
\[
\left|\frac{u+v}{2}\right|_{\mathbb{L}^2}^p + \left|\frac{\nabla u+\nabla v}{2}\right|_{\mathbb{L}^p}^p
\leq
\begin{aligned}[t]
&\frac{1}{2}(|u|_{\mathbb{L}^2}^p + |\nabla u|_{\mathbb{L}^p}^p + |v|_{\mathbb{L}^2}^p + |\nabla v|_{\mathbb{L}^p}^p)\\
&- \left|\frac{u-v}{2}\right|_{\mathbb{L}^2}^p - \left|\frac{\nabla u-\nabla v}{2}\right|_{\mathbb{L}^p}^p,
\end{aligned}
\]
whence follows the uniform convexity of \(\mathbb{X}_p(\Omega)\) for \(p \geq 2\).

As for the case where \(1 < p < 2\), instead of \eqref{Cla2}, we can derive by \eqref{bbbb} the following Clarkson's second inequality:
\begin{equation}
\label{Cla4}
\left|\frac{\nabla u+\nabla v}{2}\right|_{\mathbb{L}^p}^{p'} + \left|\frac{\nabla u-\nabla v}{2}\right|_{\mathbb{L}^p}^{p'} \leq \left(\frac{1}{2}(|\nabla u|_{\mathbb{L}^p}^p + |\nabla v|_{\mathbb{L}^p}^p)\right)^{\frac{1}{p-1}}.
\end{equation}
Combining \eqref{Cla4} with \eqref{convineq2} with \(r=p'\), we obtain
\begin{equation}
\label{Cla5}
\left|\frac{\nabla u+\nabla v}{2}\right|_{\mathbb{L}^p}^{p'} + \left|\frac{\nabla u-\nabla v}{2}\right|_{\mathbb{L}^p}^{p'}
\begin{aligned}[t]
&\leq \frac{1}{2}(|\nabla u|_{\mathbb{L}^p}^{\frac{p}{p-1}} + |\nabla v|_{\mathbb{L}^p}^{\frac{p}{p-1}})\\
&=\frac{1}{2}(|\nabla u|_{\mathbb{L}^p}^{p'} + |\nabla v|_{\mathbb{L}^p}^{p'}).
\end{aligned}
\end{equation}
Moreover, since \(p' > 2\), \eqref{Cla3} holds true with \(p\) replaced by \(p'\), i.e.,
\begin{equation}
\label{Cla6}
\left|\frac{u+v}{2}\right|_{\mathbb{L}^2}^{p'} 
\leq \frac{1}{2}(|u|_{\mathbb{L}^2}^{p'} + |v|_{\mathbb{L}^2}^{p'}) - \left|\frac{u-v}{2}\right|_{\mathbb{L}^2}^{p'}.
\end{equation}
Now combining \eqref{Cla5} with \eqref{Cla6}, we get
\[
\left|\frac{u+v}{2}\right|_{\mathbb{L}^2}^{p'} + \left|\frac{\nabla u+\nabla v}{2}\right|_{\mathbb{L}^p}^{p'}
\leq
\begin{aligned}[t]
&\frac{1}{2}(|u|_{\mathbb{L}^2}^{p'} + |\nabla u|_{\mathbb{L}^p}^{p'} + |v|_{\mathbb{L}^2}^{p'} + |\nabla v|_{\mathbb{L}^p}^{p'})\\
&- \left|\frac{u-v}{2}\right|_{\mathbb{L}^2}^{p'} - \left|\frac{\nabla u-\nabla v}{2}\right|_{\mathbb{L}^p}^{p'},
\end{aligned}
\]
which means \(\mathbb{X}_p(\Omega)\) is uniformly convex also for \(1<p < 2\).

%%%%%%%%%%%%%%%%%%%%%%%%%%%%%%%%%%%%%%%%%%%%%%%%%%
%%%%%%%%%%%%%%%%%%%%%%%%%%%%%%%%%%%%%%%%%%%%%%%%%%
%%  Reference  %%%%%%%%%%%%%%%%%%%%%%%%%%%%%%%%%%%
%%%%%%%%%%%%%%%%%%%%%%%%%%%%%%%%%%%%%%%%%%%%%%%%%%
%%%%%%%%%%%%%%%%%%%%%%%%%%%%%%%%%%%%%%%%%%%%%%%%%%
\section*{References}

%%%%%%%%%%%%%%%%%%%%%%%%%%%%%%%%%%%%%%%%%%%%%%%%%%
%%%%%%%%%%%%%%%%%%%%%%%%%%%%%%%%%%%%%%%%%%%%%%%%%%
%%%%%%%%%%%%%%%%%%%%%%%%%%%%%%%%%%%%%%%%%%%%%%%%%%
%%%%%%%%%%%%%%%%%%%%%%%%%%%%%%%%%%%%%%%%%%%%%%%%%%
%%%%%%%%%%%%%%%%%%%%%%%%%%%%%%%%%%%%%%%%%%%%%%%%%%
\end{document}